\newtheorem{theorem}{Theorem}[section]
\newtheorem{lemma}[theorem]{Lemma}
\newtheorem{proposition}[theorem]{Proposition}
\newtheorem{corollary}[theorem]{Corollary}
\theoremstyle{definition}
\newtheorem{definition}[theorem]{Definition}
\newtheorem{example}[theorem]{Example}
\newtheorem{problem}{Problem}
\newcommand*{\vcbox}[1]{\begingroup
\setbox0=\hbox{#1}\parbox{\wd0}{\box0}\endgroup}
\newcommand{\id}{\mathrm{id}}
\newcommand{\newcategory}[1]{\expandafter\newcommand\csname #1\endcsname{\mathbf{#1}}}
\newcommand{\MatR}{\mathbf{Mat}_{\mathbb R_0^+}}
\newcommand{\MapR}{\mathbf{Map}_{\mathbb R_0^+}}
\newcommand{\MatS}{\mathbf{Mat}_\Sigma}
\newcommand{\MatRel}{\mathbf{M
at}_2}
\newcommand{\C}{\mathcal C}
\newcommand{\val}[1]{\|{#1}\|}
\newcommand{\valphi}[1]{\|{#1}\|_\phi}
\renewcommand{\>}{\rangle}
\newcommand{\unit}{e}
\newcommand{\nablaDot}{{\stackinset{c}{.1ex}{c}{.1ex}{$\cdot$}{$\bm{\nabla}$}}}
\newcommand{\based}[1]{\underbracket[0.5pt][1.5pt]{#1}}
\newcommand{\supp}{\mathrm{supp}}
\newcommand{\theO}{\Dnabla\circ\Dnabla^\dagger}
\newcommand{\Dnabla}{{\nablaDot}} 
\newcommand{\dnabla}[3]{{\Dnabla}_{#1 #2}^{#3}}  
\newcommand{\ddnabla}[3]{\sqrt{\frac{\val{{#3}}}
{\val{#1}\val{#2}}}\nabla_{{#1}{#2}}^{#3}} 
\newenvironment{remark}{\par\noindent{\bf Remark.}}{\par}
\NewDocumentEnvironment{diagram}{ob}{
  \IfNoValueTF{#1}{\xymatrix{#2}}{\xymatrix@#1{#2}}%
}{}
\newcommand*\savelabel[2]{%
   \immediate\write\@auxout{%
     \noexpand\global\noexpand\@namedef{mylabel@#1}{#2}}%
   #2%
}
\newcommand*\mycellref[1]{%
   \ifcsname mylabel@#1\endcsname
     \@nameuse{mylabel@#1}%
   \else
     ??%
   \fi
}
\newcounter{mycellno}[equation]
\DeclareRobustCommand{\mycell}[1]{\stepcounter{mycellno}\savelabel{#1}{(\theequation.\themycellno)}}
\newcommand{\putmycell}[2]{\ar@{}[#1]|{\mycell{#2}}}
\newcommand{\lscellrd}{\ar@{}[rd]|{\lesssim}}
\newcommand{\lscellr}{\ar@{}[r]|{\lesssim}}
\newcommand{\gscellrd}{\ar@{}[rd]|{\gtrsim}}
\newcommand{\gscellrdd}{\ar@{}[rdd]|{\gtrsim}}
\newcommand{\lscellrdd}{\ar@{}[rdd]|{\lesssim}}
\newcommand{\simeqcellrd}{\ar@{}[rd]|{\simeq}}
\newcommand{\simeqcellr}{\ar@{}[r]|{\simeq}}
\newcommand{\simeqcellrrd}{\ar@{}[rrd]|{\simeq}}
\newcommand{\simeqcellrdd}{\ar@{}[rdd]|{\simeq}}
\begin{document}
\title{Coherent configurations and Frobenius structures}
\author[1]{Gejza Jenča}
\ead{gejza.jenca@stuba.sk}
\affiliation{
\organization={Department of Mathematics and Descriptive Geometry,
Faculty of Civil Engineering,
Slovak University of Technology,},
addressline={Radlinského~11},
postcode={810 05},
postcodesep={},
city={Bratislava},
country={Slovakia}}
\fntext[1]{Supported by grants VEGA 2/0128/24 and 1/0036/23.}
\author[2]{Anna Jenčová}
\ead{jencova@mat.savba.sk}
\affiliation{
organization={Mathematical Institute, Slovak Academy of Sciences,},
city={Bratislava},
country={Slovakia}
}
\fntext[2]{Supported by the grant VEGA 2/0128/24.}
\author[3]{Dominik Lachman}
\ead{dominik.lachman@upol.cz}
\affiliation{
organization={Department of Algebra and Geometry,
Palacký University Olomouc,},
addressline={17.~Listopadu~12},
city={Olomouc},
country={Czech Republic}
}
\fntext[3]{The author received support from the Czech Science Foundation grant 25-20013L.}
\begin{abstract}
We prove that coherent configurations can be represented as modules over Frobenius structures in the category
of real nonnegative matrices. We generalize the notion of admissible morphism from association
schemes to coherent configurations. We show that the Frobenius structure associated to a coherent configuration
can be modified to become a dagger Frobenius structure, and use this to connect the coherent configurations to
groupoids and $H^*$-algebras. We examine the properties of the dagger Frobenius structure
with respect to admissible morphisms. We introduce the matrix $O$ obtained as the
composition of comultiplication and multiplication of the dagger Frobenius structure and
prove that we may obtain the valencies of colors, and thus recover the original coherent
configuration, as an eigenvector of $O$. In the last part of the paper, we examine the
spectrum of $O$  and apply it to generalize the Lagrange theorem from groups to association schemes. 

\end{abstract}
\maketitle
%
%
%

\section{Introduction}

The notion of an association scheme arose in the theory of experimental designs in
statistics in the first half of the twentieth century
\cite{bose1939partially,bose1952classification}. Later on, association schemes were
used in coding theory, graph theory, combinatorics, and elsewhere.  We refer to
\cite{bailey2004association,zieschang2005theory} for more information on association  
schemes and related topics. Every group gives rise to an association scheme, so 
the theory of association schemes can be considered as an extension of group theory.

Coherent configurations were introduced by Higman in \cite{higman1975coherent} as a
generalization of association schemes. They are a straightforward and natural
generalization of 2-orbits of a group acting on a finite set. Many
parts of the theory of association schemes can be extended  to the theory of coherent
configurations.

The notion of a morphism of association schemes was given by Hanaki
\cite{hanaki2010acategory}. Building on Hanaki's definition, French proposed in
\cite{french2013functors} a more restrictive definition of \emph{admissible
morphisms} and proved that this notion can be used to generalize some important
notions and theorems from groups to association schemes, for example the notion of a
normal subgroup and the first isomorphism theorem.

The main aim of this paper is to show that coherent configurations (and hence also
association schemes) can be represented as modules over Frobenius structures in the
category of non-negative real matrices $\MatR$ and examine several aspects of this
representation. The category $\MatR$ is a dagger compact category, just as the
category of finite dimensional Hilbert spaces $\FHilb$.
This is the main reason why our perspective and presentation is
strongly influenced by \emph{categorical quantum mechanics}, initiated by Abramsky 
and Coecke in their seminal paper \cite{abramsky2009categorical} and recently summarized in the books
\cite{heunen2019categories,coecke2017picturing}, in which the Frobenius structures in
dagger compact categories play an important role. Whenever appropriate, we try to use
the standard language of categorical quantum mechanics -- string diagrams.

The paper is structured as follows. In the preliminaries, we introduce the main definitions and basic results concerning
coherent configurations, Frobenius structures, and dagger compact categories. We also prove some basic results about
monoids in $\MatR$.  In Section 3, we prove that coherent configurations can be represented as modules over Frobenius
structures in $\MatR$. The most important result here is \Cref{thm:connections}, which
shows how some of the classes of coherent configurations studied in the literature 
connect to existing special classes in the theory of Frobenius structures. The following two sections are devoted to an attempt to generalize the notion of an admissible morphism from
association schemes to coherent configurations. It turns out that there are at least two possible meaningful
generalizations, \Cref{def:admorphism} and \Cref{def:stradmorphism}.  In Section 6, we prove that the Frobenius
structure we associated in Section 3 to a coherent configuration can always be ``normalized'' so that it becomes a
\emph{dagger Frobenius structure}. This allows us, in a straightforward way, to connect coherent configurations to
$H^*$-algebras in \Cref{coro:hstar} and present the connection of thin coherent configurations to groupoids (already
discovered by Hanaki in \cite{hanaki2015thin}) in an appropriate category-theoretical context in
\Cref{coro:thinCCIsGroupoid}. In section 7, we return to the notion of (strongly) admissible morphisms and examine how
they behave with respect to the dagger Frobenius structures associated with coherent configurations. It is clear that the
normalization procedure from section 6 can be reversed if we know the valencies of colors. In Section 8, we prove that
the information about valencies (and thus the structure constants of the coherent configuration) can be retrieved from
the dagger Frobenius structure in the form of a positive eigenvector of a certain symmetric matrix, which we denote by
$O$. In Section 9, we examine the spectral decomposition of $O$ and compute several examples.  In \Cref{thm:genLagrange}
we then show how the spectrum of the matrix $O$ can be used to give a generalization of the (normal subgroup case of)
Lagrange's theorem from finite groups to association schemes. In the last section, we sketch some possible directions of
future research, and we close the paper with several open problems.

\section{Preliminaries}

We assume some knowledge of the basics of category theory. For notions undefined here, see the standard monograph
\cite{mac1998categories}. 

\subsection{Dagger compact categories and string diagrams}

Let us review elements of the graphical language of string diagrams
that we will use throughout the paper. We mostly follow the conventions
established in the book \cite{heunen2019categories}. An object $A$ is denoted by
an arrow (called string), a morphism is denoted by a labeled box, circle or a triangle.
A morphism $f\colon A\to B$ is drawn like this:
\begin{center}
\includegraphics{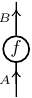}
\end{center}
If the morphisms and objects are clear from the context, we omit their labels in the
diagram. Composition of morphisms $g\circ f$ of morphisms $f\colon A\to B$ and
$g\colon B\to C$ is drawn by connecting the codomain of $f$ to the
domain of $g$:
\begin{center}
\includegraphics{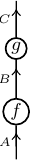}
\end{center}
Moving to monoidal categories, the monoidal product $f\otimes g\colon A\otimes C\to B\otimes D$ of morphisms $f\colon A\to B$ and $g\colon C\to D$
is drawn like this:
\begin{center}
\includegraphics{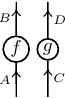}
\end{center}
The monoidal unit object $I$ is drawn as the empty diagram. For an object $A$, a
morphism $s\colon I\to A$ is called \emph{a state}, or \emph{a vector} and drawn like this:
\begin{center}
\includegraphics{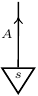}
\end{center}

An endomorphism of the unit object $t\colon I\to I$ is called \emph{a scalar}. The set of all
scalars forms a commutative monoid with respect to the composition, see \cite[Lemma 2.3]{heunen2019categories}.
There is a well-defined operation of multiplication of a morphism by a scalar:
for $f\colon A\to B$, $t\cdot f\colon A\to B$ is given by the composite
\[
\begin{diagram}
A
    \ar[r]^-{\lambda_A^{-1}}
&
I\otimes A
    \ar[r]^{t\otimes f}
&
I\otimes B
    \ar[r]^{\lambda_B}
&
B
\end{diagram}
\]
For a scalar $f$, this $t\cdot f$ coincides with $t\circ f$.
In string diagrams, we draw $t\cdot f$ like this:
\begin{center}
\includegraphics{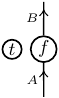}
\end{center}
A {\em right dual object} to an object $L$ of a symmetric monoidal category $(\C,\otimes,I)$ is an object
$R$ such that there are morphisms
$\eta\colon I\to R\otimes L$ and $\varepsilon\colon L\otimes R\to I$ such
that the equations
\begin{equation}
\label{eq:snakes}
\begin{aligned}
\lambda_L\circ(\varepsilon\otimes\id_L)\circ(\id_L\otimes\eta)\circ\rho_L^{-1}&=\id_L\\
\rho_{R}\circ(\id_{R}\otimes\varepsilon)\circ(\eta\otimes\id_{R})\circ\lambda_{R}^{-1}&=\id_{R}
\end{aligned}
\end{equation}
are satisfied. If $R$ is a right dual to $L$, the $L$ is a \emph{left dual to $R$}.

Denoting the $\eta$ and $\varepsilon$ by

\begin{center}
~\par
\vcbox{\includegraphics{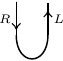}}
\vcbox{ and }
\vcbox{\includegraphics{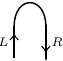}}
\vcbox{ , }
\end{center}
respectively, the string diagram form of the equations \eqref{eq:snakes} then is
\begin{center}
~\par
\vcbox{\includegraphics{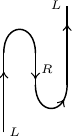}}
\vcbox{$=$}
\vcbox{\includegraphics{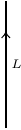}}
\vcbox{\hspace*{3em}}
\vcbox{\includegraphics{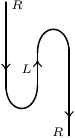}}
\vcbox{$=$}
\vcbox{\includegraphics{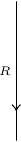}}
\end{center}
This is why we refer to \eqref{eq:snakes} as the \emph{snake identities}.
The morphisms $\eta$ and $\varepsilon$ are called {\em coevaluation} (or \emph{cup}) and
{\em evaluation} (or \emph{cap}), respectively. We say that a quadruple
$(L,R,\eta,\varepsilon)$ satisfying \eqref{eq:snakes} is \emph{a duality situation}.

Let us note that in every duality situation, $\eta$ determines $\epsilon$ and vice versa.

A symmetric monoidal category $\C$ is {\em compact closed} 
if each of its objects has a chosen duality situation $(A,A^*,\eta_A,\varepsilon_A)$. 
The object-level mapping $A\mapsto A^*$ can be made to a functor $\C\to\C^{op}$: for
a morphism $f\colon A\to B$ we define $f^*\colon B^*\to A^*$ as
\[
f^*:=(\id_{A^*}\otimes\varepsilon_B)\circ(\id_{A^*}\otimes f\otimes\id_{B^*})\circ
    (\eta_A\otimes\id_{B^*}).
\]
For every triple of objects $A,B,C$ in a compact closed category $\C$, we have
$\C(A\otimes B,C)\simeq\C(A,B^*\otimes C)$. Therefore, every compact closed category
is a closed monoidal category.

A {\em dagger category} is a category $\C$ 
equipped with a functor $\dag\colon\C\to\C^{op}$ that is the identity
on objects and satisfies $f^{\dag\dag}=f$ for every morphism $f$ of $\C$. In
fact, the $\dag$ functor can be characterized as a mapping on the class of morphisms
of $\C$ that has the following properties:
\begin{itemize}
\item $(\id_H)^\dag=\id_H$
\item $(f\circ g )^\dag=g^\dag\circ f^\dag$
\item $f^{\dag\dag}=f$
\end{itemize}

We say that a morphism $f$ in a dagger category is \emph{self-adjoint} if and only if
$f^\dagger=f$.

\begin{definition}
\label{def:dagsymmonoidal}
Let $(\C,\otimes,I)$ be a symmetric monoidal category that is at the same time
a dagger category. We say that $\C$ is a {\em dagger symmetric monoidal category}
if the following equalities are satisfied, for all objects $A,B,C,D$ and
morphisms $f\colon A\to B$ and $g\colon C\to D$.
\begin{itemize}
\item $(f\otimes g)^\dag=f^\dag\otimes g^\dag$
\item $\alpha_{A,B,C}^{-1}=\alpha_{A,B,C}^\dag$
\item $\lambda_A^{-1}=\lambda_A^\dag$
\item $\rho_A^{-1}=\rho_A^\dag$
\item $\sigma_{A,B}^{-1}=\sigma_{A,B}^\dag$
\end{itemize}
\end{definition}

\begin{definition}
A dagger symmetric monoidal category $(\C,\otimes,I)$ is a {\em dagger compact category} if
 $\C$ is a compact closed category and the following diagram commutes
\begin{equation}
\label{diag:dagcompact}
\xymatrix{
I
	\ar[r]^-{\epsilon_A^{\dag}}
	\ar[rd]_{\eta_A}
&
A\otimes A^*
	\ar[d]^{\sigma_{A,A^*}}
\\
~
&
A^*\otimes A
}
\end{equation}

\end{definition}

\begin{example}\cite[Example 3.58]{heunen2019categories}
\begin{itemize}
\item The category of finite dimensional Hilbert spaces $\FHilb$ is dagger compact.
The monoidal structure is the tensor product.
For an object $X$, $X^*$ is the dual space, $\epsilon_X\colon X\otimes X^*\to\mathbb
C$ is the evaluation of a linear functional and this already determines the $\eta_X$.
For every linear mapping $f\colon X\to Y$, 
the mapping $f^*$ is the usual $Y^*\to X^*$ given by the precomposition of a linear functionals in $Y^*$ by $f$, and
$f^\dag\colon Y\to X$ is uniquely determined by the rule
$\langle f(x)|y\rangle=\langle x|f^\dag(y)\rangle$, for all $x\in X$ and $y\in Y$.
\item The category of sets and relations $\Rel$ is dagger compact. The monoidal
structure is the product of sets. For an object $X$, $X^*$ is equal to $X$ and 
the relation $\epsilon_X\colon X^*\otimes X\to 1$ is given by the rule 
\[
\epsilon_X((a,b),*)\iff a=b .
\]
The relation $\eta_X$ is given similarly.
For a relation $f\colon X\to Y$, $f^\dag=f^*$ is just the opposite relation.
\end{itemize}
\end{example}

\subsection{Monoids and comonoids}

Let $(\C,\otimes,I)$ be a monoidal category. A {\em monoid} in $\C$ is
a triple $(S,\nabla,\unit)$, where $S$ is an object of $\C$, $\unit\colon I\to S$ is the
{\em unit} and
$\nabla:S\otimes S\to S$ is the {\em multiplication}, such that the equations
\begin{equation}\label{eq:monoid}
\nabla\circ(e\otimes\id_S)=\lambda_S
\quad
\nabla\circ(\id_S\otimes e)=\rho_S
\quad
\nabla\circ(\nabla\otimes\id_S)=\nabla\circ(\id_S\otimes\nabla)
\end{equation}
are satisfied.
The two left equations in \eqref{eq:monoid} are called
\emph{left and right unit laws}, the right one is the \emph{associative law}.

In string diagrams, the equations can be expressed as

\begin{center}
\vskip 0.5em
\vcbox{\includegraphics{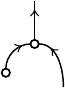}}
\vcbox{ $=$ }
\vcbox{\includegraphics{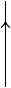}}
\vcbox{ $=$ }
\vcbox{\includegraphics{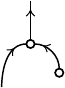}}
\hspace*{3em}
\vcbox{\includegraphics{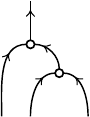}}
\vcbox{ $=$ }
\vcbox{\includegraphics{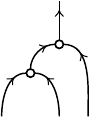}}
\vskip 0.5em
\end{center}

A \emph{comonoid} $(S,\Delta,c)$ is defined in a dual way:
\begin{equation}\label{eq:comonoid}
(c\otimes\id_S)\circ\Delta=\lambda_S
\quad
(\id_S\otimes c)\circ\Delta=\rho_S
\quad
(\id_S\otimes\Delta)\circ\Delta=(\Delta\otimes\id_S)\circ\Delta
\end{equation}
\begin{center}
\vskip 0.5em
\vcbox{\includegraphics{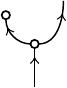}}
\vcbox{ $=$ }
\vcbox{\includegraphics{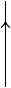}}
\vcbox{ $=$ }
\vcbox{\includegraphics{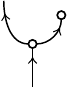}}
\hspace*{3em}
\vcbox{\includegraphics{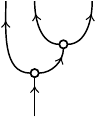}}
\vcbox{ $=$ }
\vcbox{\includegraphics{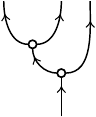}}
\vskip 0.5em
\end{center}
here $c\colon S\to I$ is called the {\em counit} and $\Delta\colon S\to S\otimes S$
is called the {\em comultiplication}.

Let $(S,\nabla_S,e_S)$, $(T,\nabla_T,e_T)$ be monoids in a monoidal category. A
\emph{monoid morphism}
is a morphism $f\colon S\to T$ preserving the multiplication and the unit:
\[
\nabla_S\circ(f\otimes f)=f\circ\nabla_T\qquad f\circ e_S=e_T
\]
\begin{center}
\vskip 0.5em
\vcbox{\includegraphics{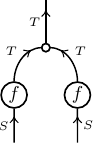}}
\vcbox{ $=$ }
\vcbox{\includegraphics{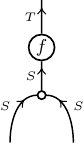}}
\hspace*{3em}
\vcbox{\includegraphics{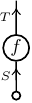}}
\vcbox{ $=$ }
\vcbox{\includegraphics{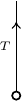}}
\vspace*{0.5em}
\end{center}

An invertible monoid morphism is called a \emph{monoid isomorphism}.
An invertible monoid morphism with the same domain and codomain is called a
\emph{monoid automorphism}.
A morphism that preserves just the multiplication and not necessarily the unit is
called a \emph{semigroup morphism}.

\subsection{Frobenius structures}

A {\em Frobenius structure} in a symmetric monoidal category $(\C,\otimes,I)$ is an object
$S$ equipped with a monoid structure $(S,\nabla,e)$ and a comonoid structure
$(S,\Delta,c)$ such that the equation
\begin{equation}
(\nabla\otimes\id_S)\circ(\id_S\otimes\Delta)
=\Delta\circ\nabla=(\id_S\otimes\nabla)\circ(\Delta\otimes\id_S)
\end{equation}
is satisfied.
In string diagrams, this is expressed as
\begin{center}
\vcbox{\includegraphics{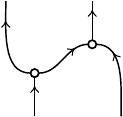}}
\vcbox{ $=$ }
\vcbox{\includegraphics{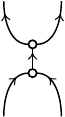}}
\vcbox{ $=$ }
\vcbox{\includegraphics{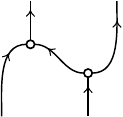}}
\end{center}

A Frobenius structure in a dagger monoidal category is a {\em dagger Frobenius
structure} if $\nabla=\Delta^\dag$ and $e=c^\dag$. Clearly, every dagger Frobenius
structure is completely determined by its monoid (or comonoid) structure.

Let us call $c\colon S\to I$ on a monoid $S$ a 
\emph{non-degenerate form} if there is a
(necessarily unique) coevaluation $\eta\colon I\to S\otimes S$ such that
$(S,S,\eta,c\circ\nabla)$ is a duality situation:
\begin{center}
\vcbox{\includegraphics{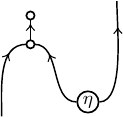}}
\vcbox{ $=$ }
\vcbox{\includegraphics{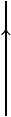}}
\vcbox{ $=$ }
\vcbox{\includegraphics{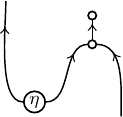}}
\end{center}
\begin{theorem}\cite[Proposition 5.16]{heunen2019categories}. \label{thm:nondeg} 
Let $(S,\nabla,e)$ be a monoid in a monoidal category. 
There is a bijective correspondence between non-degenerate forms on $S$ and
comonoids $(S,\Delta,c)$ making the monoid-comonoid pair into a Frobenius structure.
\end{theorem}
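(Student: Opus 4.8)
The plan is to write down the correspondence explicitly in both directions and then verify that the two constructions are mutually inverse. All the computations live most comfortably in the string-diagram calculus, the only ingredients being the monoid laws, the comonoid laws, the Frobenius law, and the snake identities for the duality $(S,S,\eta,c\circ\nabla)$.

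\textbf{From a Frobenius structure to a non-degenerate form.} Given a Frobenius structure $(S,\nabla,e,\Delta,c)$, I claim the counit $c$ is a non-degenerate form, with coevaluation $\eta:=\Delta\circ e\colon I\to S\otimes S$. To verify the snake identities for $(S,S,\eta,c\circ\nabla)$ one expands $\eta=\Delta\circ e$, pushes the $\Delta$ through the $\nabla$ inside $c\circ\nabla$ using the Frobenius law (in the form $(\nabla\otimes\id_S)\circ(\id_S\otimes\Delta)=\Delta\circ\nabla$ for the first snake, and $(\id_S\otimes\nabla)\circ(\Delta\otimes\id_S)=\Delta\circ\nabla$ for the second), collapses the leftover $\nabla\circ(\id_S\otimes e)$, respectively $\nabla\circ(e\otimes\id_S)$, by a unit law, and collapses $(c\otimes\id_S)\circ\Delta$, respectively $(\id_S\otimes c)\circ\Delta$, by a counit law; what remains is exactly the snake identity, up to unitors. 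Since in any duality situation the cup is determined by the cap, this $\eta$ is the unique one, so $c$ genuinely is a non-degenerate form. This gives the first assignment, sending a Frobenius structure to its counit.

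\textbf{From a non-degenerate form to a Frobenius structure.} Given a non-degenerate form $c$ on $(S,\nabla,e)$ with its unique coevaluation $\eta$, set
\[
\Delta:=(\id_S\otimes\nabla)\circ(\eta\otimes\id_S)\colon S\to S\otimes S,
\]
and keep $c$ as the prospective counit. Conceptually, $\Delta$ is the transpose of $\nabla$ under the self-duality $(S,S,\eta,c\circ\nabla)$, which is why associativity of $\nabla$ turns into coassociativity of $\Delta$ and the unit turns into the counit; concretely one checks the comonoid laws and the Frobenius law one at a time. The equation $\Delta\circ\nabla=(\id_S\otimes\nabla)\circ(\Delta\otimes\id_S)$ falls out of associativity of $\nabla$ alone; one of the counit laws is immediate from a snake identity once $c$ is absorbed into $c\circ\nabla$; and the remaining identities---the other counit law, coassociativity, and $\Delta\circ\nabla=(\nabla\otimes\id_S)\circ(\id_S\otimes\Delta)$---follow by the same recurring move: pair a chosen output leg of each side against the non-degenerate form $c\circ\nabla$, cancel with the snake identities, and reduce to an instance of associativity or a unit law of $\nabla$. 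This yields a Frobenius structure on the given monoid with counit $c$, which is the second assignment.

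\textbf{The two assignments are mutually inverse.} Starting from a Frobenius structure, forming $\eta=\Delta\circ e$ and then $\Delta':=(\id_S\otimes\nabla)\circ(\eta\otimes\id_S)$, substituting $\eta$, applying the Frobenius law, and collapsing $\nabla\circ(e\otimes\id_S)$ by the left unit law recovers $\Delta'=\Delta$. Conversely, starting from a non-degenerate form $c$ with coevaluation $\eta$, building $\Delta$ by the displayed formula and re-extracting $\eta':=\Delta\circ e$ gives $\eta'=\eta$ at once from the right unit law $\nabla\circ(\id_S\otimes e)=\rho_S$. I expect no conceptual obstacle; the only real work is the bookkeeping in the second direction---tracking the unitors and the placement of identity wires in the triple and quadruple tensor products, and organising the ``pair-a-leg-and-cancel'' reductions---which is precisely what string diagrams are built to absorb, so drawn as pictures the whole argument is quite short.
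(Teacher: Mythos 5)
Your proposal is correct and follows the same route as the source: the paper itself does not reprove this cited result (\cite[Proposition 5.16]{heunen2019categories}) but records exactly the correspondence you construct, namely counit $c$ with cup $\eta=\Delta\circ e$ in one direction and $\Delta$ built from $\eta$ and $\nabla$ in the other, verified via the Frobenius, unit/counit, and snake identities. The only cosmetic difference is that you take $\Delta=(\id_S\otimes\nabla)\circ(\eta\otimes\id_S)$ while the paper writes the mirror-image formula $(\nabla\otimes\id_S)\circ(\id_S\otimes\eta)$, and the paper itself notes (cf.\ \eqref{eq:comulAlt}) that these give the same comultiplication.
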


The bijective correspondence goes as follows. For a given Frobenius structure, the non-degenerate form is simply the counit of the
comonoid part, the self-duality situation is then
$(S,S,\Delta\circ e,c\circ\nabla)$.
On the other hand, given a non-degenerate form $c$ on a monoid $S$, the related
coevaluation $\eta$ determines a comultiplication
\[
\Delta=(\nabla\otimes\id_S)\circ(\id_S\otimes\eta)\circ\lambda_S
\]
\begin{center}
\includegraphics{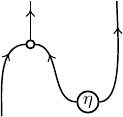}
\end{center}
which gives us a comonoid $(S,\Delta,c)$ such that the Frobenius law for
$\nabla$ and $\Delta$ is satisfied.

It is easy to check that, in a compact closed category, the existence of the
duality situation $(S,S,\eta,c\circ\nabla)$ is equivalent to the fact that the morphism 
\[
(c\otimes\id_{S^*})\circ(\nabla\otimes\id_{S^*})\circ(\id_S\otimes\eta_S)\colon S\otimes I\to I\otimes S^*
\]
\begin{center}
\includegraphics{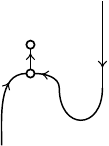}
\end{center}
is an isomorphism. Note that here $\eta_S$ denotes the cup of the compact closed
structure.

We will use some additional properties of Frobenius structures 
that are considered in the literature. We say that a Frobenius structure
$S$ is \emph{special} if there is some scalar $t\colon I\to I$ such that
\[
\nabla\circ\Delta=\lambda_S\circ(t\otimes\id_S)\circ\lambda_S^{-1}.
\]
In string diagrams, this means
\begin{center}
\vcbox{\includegraphics{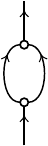}}
\vcbox{ $=$ }
\vcbox{\includegraphics{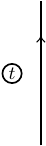}}
\end{center}
Let us remark that in the book \cite{heunen2019categories}, it is required
that $t=1$ for a Frobenius structure to be special.

\begin{example}\cite[Lemma 5.9]{heunen2019categories}\label{ex:pants}
For every object $X$ in a dagger compact category, there is a ``pair of pants'' Frobenius
structure on $X^*\otimes X$, with multiplication, comultiplication, unit and counit
given by the diagrams
\begin{center}
\vcbox{\includegraphics{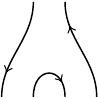}}
\vcbox{ $\quad$ }
\vcbox{\includegraphics{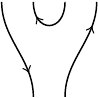}}
\vcbox{ $\quad$ }
\vcbox{\includegraphics{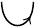}}
\vcbox{ $\quad$ }
\vcbox{\includegraphics{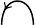}}
\vcbox{ $\quad$ }
\end{center}
\end{example}
\begin{example}\label{ex:groupoids}
\cite{heunen2013relative}
A \emph{groupoid} is a category in which every morphism is invertible.
There is a one-to-one correspondence between small groupoids and special dagger Frobenius
structures in $\Rel$.
For a groupoid $\mathbb G$, the underlying set of the dagger Frobenius structure 
is the set of all morphisms denoted by $\mathbb G_1$, the multiplication is the partial operation
of composition of morphisms, and the unit is the relation $\{*\}\to\mathbb G_1$ that
selects the set of all unit morphisms 
in $\mathbb G_1$.
\end{example}

We say that a Frobenius structure $S$ is \emph{symmetric} if
\[
c\circ\nabla\circ\sigma_{S,S}=c\circ\nabla,
\]
or, in string diagrams,
\begin{center}
\vcbox{\includegraphics{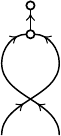}}
\vcbox{ $=$ }
\vcbox{\includegraphics{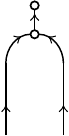}}
\end{center}
\begin{definition}
An $H^*$ algebra is an algebra $A$ over $\mathbb C$ equipped with an anti-linear
involution $\dag\colon A\to A$, that is also a Hilbert
space satisfying
\[
\langle ab|c\rangle = \langle b|a^\dag c\rangle=\langle a|cb^\dag \rangle.
\]
\end{definition}
\begin{example}\label{ex:hStarAlgebras}
\cite{vicary2011acategorical} and \cite[Theorem 5.32]{heunen2019categories}
There is a one-to-one correspondence between finite dimensional $H^*$ algebras and
symmetric dagger Frobenius structures in $\FHilb$.
\end{example}

Every Frobenius structure $S$ in a compact closed category admits an automorphism
$\alpha\colon S\to S$ of its monoid reduct that is completely
determined by the property
\begin{equation}\label{eq:nakayama}
c\circ\nabla\circ(\id_S\otimes\alpha)\circ\sigma_{S,S}=c\circ\nabla
\end{equation}
\begin{center}
\vcbox{\includegraphics{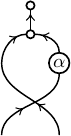}}
\vcbox{ $=$ }
\vcbox{\includegraphics{symmetric_symmetric_nosigma.pdf}}
\end{center}
We say that $\alpha$ is the \emph{Nakayama automorphism}
\cite{nakayama1941frobeniusean} of $S$. Note that
$S$ is symmetric if and only if $\alpha$ is the identity.

We say that a Frobenius structure $S$ is \emph{extra} if $c\circ e=\id_I$.

\subsection{The categories $\MatS$}

A \emph{commutative semiring} is a set $\Sigma$ equipped with a commutative monoid $(\Sigma, 0, +)$ and a commutative monoid $(\Sigma, 1, \cdot)$, such that $0$ is the zero element with respect to multiplication and multiplication distributes over addition. Let $\Sigma$ be a commutative semiring with unit, with $0 \neq 1$.
The category $\MatS$ is the
category whose objects are finite sets and whose morphisms from a set $X$ to a set $Y$ are matrices of elements of
$\Sigma$ with columns indexed by the set $X$ and rows indexed by the set $Y$. For $f\colon X\to Y$, we denote the
element of the matrix $f$ corresponding to a pair $x\in X$, $y\in Y$ by $f_x^y$.

The composition of morphisms
is the usual matrix multiplication: for $f\colon A\to B$ and
$g\colon B\to C$, we have
\[
(g\circ f)_a^c=\sum_{b\in B}f_a^b g_b^c,
\]
and the identity morphism $\id_A\colon A\to A$ is the unit
matrix. In computations, we shall write $\delta_a^{a'}$ for the elements
of the $\id_A$, this is simply the Kronecker delta:
\[
\delta_a^{a'}=
\begin{cases}
1&a=a'\\
0&\text{otherwise.}
\end{cases}
\]

There is a faithful functor $M$ from the category of finite sets $\FinSet$
to $\MatS$ that is identity on objects,
and for each morphism $f\colon A\to B$
\begin{equation}
M(f)_a^b=
\begin{cases}
1&b=f(a)\\
0&\text{otherwise}
\end{cases}
\end{equation}
for every $a\in A$, $b\in B$. In this paper, we identify $M(f)$ with $f$
whenever there is no danger of confusion.

For a morphism $f\colon A\to B$, the \emph{support of $f$} is the subset of $A$ given
by
\[
\supp(f)=\{a\in A:\text{$f_a^b\neq 0$ for some $b\in B$}\}.
\]
The \emph{support endomorphism of} $f$ is the matrix $\based f\colon A\to A$ given by
\begin{equation}\label{eq:supportendo}
{(\based f)}_a^{a'}=
\begin{cases}
1 & a=a'\text{ and $a\in \supp(f)$}\\
0 & \text{otherwise.}
\end{cases}
\end{equation}

Note that $\based f$ is an idempotent endomorphism of $A$ and that $f\circ\based f=f$.

The category $\MatS$ can be equipped with a symmetric monoidal structure
$(\MatS,I,\otimes,\alpha,\lambda,\rho,\sigma)$, where 
\begin{itemize}
\item $I$ is a one-element set (we write $I=\{*\}$ so that $*$ is the unique element of
the unit object $I$),
\item for finite sets $X,Y$, $X\otimes Y=X\times Y$,
\item for morphisms $f:A\to B$ and $g:C\to D$, $f\otimes g\colon A\otimes C\to B\otimes D$ is
the tensor (or Kronecker) product of matrices:
\[
(f\otimes g)_{ac}^{bd}=f_a^b\cdot g_c^d
\]
\item The $\alpha$, $\lambda$, $\rho$ and $\sigma$ are transferred from the canonical
cartesian monoidal structure on $\FinSet$ via the $M$ functor.
\end{itemize}

Let $\mathbbm 2=\{0,1\}$ denote the two-element totally ordered set, with
$0<1$.  Clearly, the bounded distributive lattice $(\mathbbm 2,0,1,\vee,\wedge)$ is a semiring and it is easy to
see that the category $\MatRel$ is the same thing as the category of finite
sets and relations $\FinRel$, a full subcategory of the category of sets and
relations $\Rel$.

In a very similar way to $\Rel$, we can equip $\MatS$ with a dagger compact
structure, as follows.  For every object $X$, we put $X^*=X$ and  define
$\eta_X\colon I\to X^*\otimes X$, $\epsilon_X\colon X\otimes X^*\to I$ by
$(\eta_X)_{*}^{ab}=\delta_a^b$ and $(\epsilon_X)_{ab}^*=\delta_a^b$. For every
morphism $f\colon A\to B$, it turns out that $f^*$ is simply the transpose of the
matrix $f$. We then put $f^\dagger=f^*$.

\subsection{The category $\MatR$}\label{sec:2.5}

Our basic category will be $\MatR$, where $\mathbb R_0^+:=[0,\infty)_{\mathbb R}$ is the semiring of
non-negative reals equipped with the usual addition and multiplication.  Morphisms in
$\MatR$ are matrices with nonnegative real entries; these will be called non-negative
matrices. The restriction to non-negative numbers has remarkable effects. For
example, isomorphisms in $\MatR$ can be characterized in a very simple way, as the
following theorem shows.
\begin{theorem}\cite{ding2014matrix}
A non-negative matrix has a non-negative inverse if and only if it is the product
of a diagonal matrix with strictly  positive diagonal entries and a permutation matrix.
\end{theorem}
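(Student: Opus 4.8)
The plan is to prove the nontrivial implication by a direct combinatorial analysis of the identity $AB = I$, where $B = A^{-1}$; the converse is immediate. First, the easy direction: if $A = DP$ with $D$ diagonal and strictly positive on the diagonal and $P$ a permutation matrix, then $P^{-1}D^{-1} = P^{\mathsf T}D^{-1}$ is visibly a non-negative matrix and is a two-sided inverse of $A$. For the forward direction, note that possessing an inverse already forces $A$ (and hence $B$) to be square, say $n\times n$, so we work with that throughout.

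Now write the entrywise form of $AB = I$: for all $i,j$ we have $\sum_k A_{ik}B_{kj} = \delta_{ij}$. I would extract two consequences of non-negativity. For $i\neq j$ the sum vanishes and its summands are all $\ge 0$, so $A_{ik}B_{kj} = 0$ for every $k$. For $i = j$ the sum equals $1$, so for each $i$ there is some $k$ with $A_{ik} > 0$; thus the support $S_i := \{k : A_{ik} > 0\}$ of the $i$-th row of $A$ is non-empty. The heart of the argument is to show that the $S_i$ are pairwise disjoint. Fix $k\in S_i$, i.e.\ $A_{ik} > 0$; then for every $j\neq i$ the relation $A_{ik}B_{kj} = 0$ forces $B_{kj} = 0$, so the $k$-th row of $B$ can be nonzero only in column $i$. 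If some $k$ lay in both $S_i$ and $S_{i'}$ with $i\neq i'$, the same reasoning applied to $i'$ would force the $k$-th row of $B$ to be nonzero only in column $i'$ as well, hence identically zero — contradicting the invertibility of $B$. Therefore the non-empty sets $S_1,\dots,S_n$ are pairwise disjoint subsets of an $n$-element set, so each is a singleton, $S_i = \{\pi(i)\}$, and $\pi$ is a permutation of $\{1,\dots,n\}$.

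From here the conclusion is mechanical: $A_{i,\pi(i)} > 0$ and $A_{ik} = 0$ for $k\neq\pi(i)$, so $A = DP$ with $D := \operatorname{diag}(A_{1,\pi(1)},\dots,A_{n,\pi(n)})$ having strictly positive diagonal and $P$ the permutation matrix of $\pi$. The proof uses nothing beyond non-negativity, the identity $AB = I$, and the fact that an invertible matrix has no zero row. The only real obstacle — and it is a mild one — is the disjointness step: one must resist arguing only from $A$ and genuinely invoke that $B$ has no zero row, which is precisely where invertibility of the inverse (as opposed to a mere one-sided inverse) is used; the remainder is bookkeeping.
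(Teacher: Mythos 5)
Your proof is correct. Note that the paper does not actually prove this statement at all --- it is quoted verbatim from the cited reference \cite{ding2014matrix} and used as a black box (its only role is to yield \Cref{coro:chariso}) --- so there is no in-paper argument to compare against; your self-contained combinatorial proof fills that gap. The argument is sound at every step: the easy direction via $P^{\mathsf T}D^{-1}\geq 0$, the observation that $AB=I$ with $A,B\geq 0$ forces $A_{ik}B_{kj}=0$ for $i\neq j$, and the key disjointness step, where you correctly locate the use of invertibility of $B$ (no zero row) rather than trying to argue from $A$ alone. From the disjoint nonempty supports $S_i$ being $n$ subsets of an $n$-element set you rightly conclude each is a singleton and the resulting map $\pi$ is a bijection, giving $A=DP$ with $D=\operatorname{diag}(A_{1,\pi(1)},\dots,A_{n,\pi(n)})$ strictly positive. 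One could alternatively run the symmetric argument on $BA=I$ to see that each column of $A$ also has a single positive entry, but as you observed this is not needed: the row structure together with bijectivity of $\pi$ already yields the monomial form, and it matches exactly the characterization of isomorphisms in $\MatR$ that the paper extracts in \Cref{coro:chariso}.
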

As a consequence, we have the following.
\begin{corollary}\label{coro:chariso}
In $\MatR$, $f\colon A\to B$ is an isomorphism if and only if
for every $a\in A$ there is a unique $b\in B$ such that $f_a^b>0$ and
for every $b\in B$ there is a unique $a\in A$ such that $f_a^b>0$.
\end{corollary}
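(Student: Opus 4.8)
The plan is to deduce Corollary~\ref{coro:chariso} directly from the theorem of Ding and Rhee quoted immediately before it. First I would unwind what it means, in terms of matrix entries, for $f\colon A\to B$ to be the product of a diagonal matrix with strictly positive diagonal entries and a permutation matrix; writing $f = D\cdot P$ with $D$ diagonal positive and $P$ a permutation matrix, the entry $f_a^b$ equals $D_b^b$ when $b = \pi(a)$ for the associated bijection $\pi\colon A\to B$ and is $0$ otherwise. In particular $f$ has exactly one nonzero entry in each column and each row, and those nonzero entries are strictly positive. This is visibly the same as the entrywise condition in the statement of the corollary. So the ``only if'' direction is immediate once I observe that an isomorphism in $\MatR$ is, by definition, a non-negative matrix with a non-negative (two-sided) inverse, which is exactly the hypothesis of the cited theorem.

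For the ``if'' direction I would argue that the stated entrywise condition forces $f$ to have the form $D\cdot P$: the condition ``for every $a$ there is a unique $b$ with $f_a^b>0$'' says each column has exactly one nonzero entry, and ``for every $b$ there is a unique $a$ with $f_a^b>0$'' says each row has exactly one nonzero entry; together these determine a bijection $\pi\colon A\to B$ with $f_a^b\neq 0 \iff b=\pi(a)$, and setting $D$ to be the diagonal matrix on $B$ with $D_b^b = f_{\pi^{-1}(b)}^b > 0$ and $P = M(\pi)$ gives $f = D\circ P$. Since $A$ and $B$ then necessarily have the same cardinality (they are in bijection), $P$ really is a permutation matrix up to the identification of $A$ with $B$, and by the Ding--Rhee theorem $f$ has a non-negative inverse, i.e.\ $f$ is an isomorphism in $\MatR$.

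One small point worth addressing explicitly: the cited theorem is phrased for square matrices, whereas $f\colon A\to B$ is a priori rectangular. I would note that if $f$ is an isomorphism then $|A| = |B|$ (a non-negative matrix with a non-negative inverse is in particular invertible over $\mathbb R$, forcing the underlying sets to have equal size), so we may harmlessly identify $B$ with $A$ and apply the theorem; conversely the entrywise condition already entails $|A| = |B|$ as noted above. With that observed, the corollary is just a restatement of the Ding--Rhee characterization in the combinatorial language of entries.

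I do not expect a genuine obstacle here; the only thing requiring a little care is the bookkeeping that translates ``product of a positive diagonal matrix and a permutation matrix'' into the row/column condition and back, together with the remark reconciling the square-matrix hypothesis of the quoted theorem with the rectangular formulation of the corollary. Everything else is a direct unpacking of definitions.
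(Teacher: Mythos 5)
Your proposal is correct and follows exactly the route the paper intends: the paper states the corollary as an immediate consequence of the quoted Ding--Rhee theorem without writing out the deduction, and your argument is precisely that unpacking (translating ``positive diagonal times permutation'' into the one-nonzero-entry-per-row-and-column condition, plus the routine remark that either hypothesis forces $|A|=|B|$ so the square-matrix theorem applies). Nothing further is needed.
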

Therefore, two objects $X,Y$ of $\MatR$ are isomorphic if and only if
$|X|=|Y|$ and if we want to construct all isomorphisms in the homset
$\MatR(X,Y)$, we may simply take all permutation matrices and then replace the $1$'s in the matrix by 
arbitrary strictly positive real numbers.

On the other hand, there is an obviously defined functor $L\colon\MatR\to\FHilb$. On objects, we put $L(X)=\mathbb C^X$
and for a matrix $f\colon X\to Y$, $L(f)\colon\mathbb C^X\to\mathbb C^Y$ is the linear mapping that
is represented by the matrix $f$ in the standard bases. This functor is strictly monoidal and preserves the dagger
compact structure. Most of the time, we will suppress $L$ from our notations and treat the matrix $f$ and the associated
linear mapping $L(f)$ as the same thing.

Since
$\mathbb R_0^+$ is a partially ordered semiring, $\MatR$ can be enriched over
$\Pos$ by ordering the matrices in the homsets element-wise. But this 2-structure is 
uninteresting: for example, adjunctions are only the isomorphisms, and monads
are only the identity morphisms.

Nevertheless, there is another 2-structure on $\MatR$. Consider the functor
$R\colon\MatR\to\FinRel$ that is identity on objects and on morphisms
we have
\[
R(f)_a^b=1\iff f_a^b>0,
\]
for morphism $f\colon A\to B$ in $\MatR(A,B)$.
We may then preorder the morphisms in $\MatR(A,B)$ by the rule
$f\lesssim g\iff R(f)\leq R(g)$ or, equivalently,
\[
f\lesssim g\iff (\forall a\in A,b\in B: f_a^b>0\implies g_a^b>0).
\]
Note that an isomorphism class with respect to this 2-structure is quite large, for example, all diagonal matrices 
$f\colon A\to A$ that have all of their diagonal entries nonzero are isomorphic to
each other.

By equipping the $\MatR$ with such a 2-structure, we upgrade the functor $R\colon\MatR\to\FinRel$ to a 2-functor.
The internal left adjoints in the 2-structure on $\MatR$ are then those morphisms
that are mapped to left adjoints (i.e. mappings) in $\FinRel$ by the functor $R$.
In detail, $f\colon A\to B$ is left adjoint in $\MatR$ if and only if for every $a\in A$ there
is exactly one $b\in B$ such that $f_a^b>0$. We will call the left adjoints
in $\MatR$ \emph{internal maps} and denote the corresponding subcategory
by $\MapR$. Note that $f$ is an internal map if and only if $R(f)$ is a map.

The functor $R\colon\MatR\to\FinRel$ has very strong properties:
it is identity on objects, full, and strictly monoidal (hence also comonoidal).
Clearly, it also preserves the dagger compact structure. 
Note that \Cref{coro:chariso} can be simply phrased as ``$R$ reflects isomorphisms''.

\subsection{Monoids in $\MatR$}

Let us rewrite the associative law for a monoid $S$ in $\MatR$ in a concise form. For all
$a,b,c,d\in S$, we have
\[
(\nabla\circ(\nabla\otimes\id))_{abc}^d=
\sum_{(p,c')\in S\otimes S}\nabla_{pc'}^d\nabla_{ab}^p\delta_{c}^{c'}=
\sum_{p\in S}\nabla_{pc}^d\nabla_{ab}^p
\]
and
\[
(\nabla\circ(\id\otimes\nabla))_{abc}^d=
\sum_{(a',q)\in S\otimes S}\nabla_{a'q}^d\delta_a^{a'}\nabla_{bc}^{q}=
\sum_{q\in S}\nabla_{aq}^d\nabla_{bc}^q
\]
hence, the associative law means that
\begin{equation}\label{eq:assoc}
\sum_{p\in S}\nabla_{pc}^d\nabla_{ab}^p=
\sum_{q\in S}\nabla_{aq}^d\nabla_{bc}^q,
\end{equation}
for all $(a,b,c)\in S\otimes S\otimes S$ and $d\in S$.

For a monoid $(S,\nabla,\unit)$ in $\MatR$, we denote
$$
E_S=\{p\in S:e_*^p>0\}
$$
and refer to the elements of $E_S$ as \emph{units}. 
The unit laws mean that, for all $a,c\in S$, 
\begin{equation}\label{eq:unitsMatR}
\sum_{p\in E_S}e_*^p\nabla_{pa}^c=\delta_a^c=\sum_{q\in E_S}e_*^q\nabla_{aq}^c.
\end{equation}
The following two lemmas establish basic properties of units in a monoid in $\MatR$.
\begin{lemma}\label{lemma:units}
Let $(S,\nabla,\unit)$ be a monoid in $\MatR$. For every $a\in S$, we have
the following.
\begin{enumerate}
\item[(a)] There exists $p\in E_S$ such that $\nabla_{pa}^c>0$ for some $c\in S$.
\item[(b)]If $p\in E_S$, $\nabla_{pa}^c>0$, then $a=c$.
\item[(a')] There exists $p\in E_S$ such that $\nabla_{ap}^c>0$ for some $c\in S$.
\item[(b')]If $p\in E_S$, $\nabla_{ap}^c>0$, then $a=c$.
\item[(c)]The $p\in E_S$ with $\nabla_{pa}^a>0$ is unique and
\begin{equation}\label{eq:valueoflunit}
\nabla_{pa}^a=\frac{1}{e_*^p}\text{.}
\end{equation}
\item[(c')]The $p\in E_S$ with $\nabla_{ap}^a>0$ is unique and
\begin{equation}\label{eq:valueofrunit}
\nabla_{ap}^a=\frac{1}{e_*^p}\text{.}
\end{equation}
\end{enumerate}
\end{lemma}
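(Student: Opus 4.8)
The plan is to derive all six statements directly from the two unit laws \eqref{eq:unitsMatR}, invoking the associative law \eqref{eq:assoc} only for the uniqueness claims. The one tool used throughout is that every entry is a non-negative real, so a finite sum of such entries is positive if and only if at least one summand is positive; consequently, if a sum equals $\delta_a^c$ and some summand is positive, then $a=c$.

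First I would prove (b): if $p\in E_S$ and $\nabla_{pa}^c>0$, then in the left unit law $\sum_{q\in E_S}e_*^q\nabla_{qa}^c=\delta_a^c$ the $q=p$ summand is $e_*^p\nabla_{pa}^c>0$, so $\delta_a^c>0$ and hence $a=c$. Then (a) is immediate by specializing the same identity to $c=a$: the right-hand side is $1$, so some summand $e_*^p\nabla_{pa}^a$ is positive, and this $p$ (with $c=a$) witnesses (a). Statements (a') and (b') are proved word for word from the right unit law.

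For the uniqueness in (c), suppose $p,p'\in E_S$ with $\nabla_{pa}^a>0$ and $\nabla_{p'a}^a>0$. I would instantiate \eqref{eq:assoc} with $(a,b,c)$ replaced by $(p,p',a)$ and $d$ replaced by $a$; renaming its summation variables to $r$ and $s$ this reads $\sum_{r}\nabla_{ra}^a\nabla_{pp'}^r=\sum_{s}\nabla_{ps}^a\nabla_{p'a}^s$. On the right, part (b) (with $p'\in E_S$) kills every term with $s\neq a$, so the right side equals $\nabla_{pa}^a\nabla_{p'a}^a>0$; hence the left side is positive, so $\nabla_{pp'}^r>0$ for some $r$, and part (b) (now with $p\in E_S$) forces $r=p'$. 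Thus $\nabla_{pp'}^{p'}>0$, and part (b') (with $p'\in E_S$) gives $p=p'$; cancelling the positive factor $\nabla_{p'a}^a$ in the displayed identity even yields $\nabla_{pp'}^{p'}=\nabla_{pa}^a$. Existence of such a $p$ is the content of (a)$+$(b). Once uniqueness is known, the left unit law with $c=a$ collapses to the single surviving term $e_*^p\nabla_{pa}^a=1$, which is \eqref{eq:valueoflunit}; statement (c') is the mirror image, obtained from the right unit law with the roles of (b) and (b') interchanged.

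The only step that calls for any thought is choosing the right instance of \eqref{eq:assoc} in (c); beyond that I foresee no genuine obstacle, only the bookkeeping of deciding which of (b) or (b') applies on each side and of checking that the factor being cancelled is indeed positive.
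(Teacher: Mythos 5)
Your proposal is correct and follows essentially the same route as the paper: (a), (b), (a'), (b') are read off from the unit laws together with non-negativity, and the uniqueness in (c) uses exactly the same instance of the associativity law, with (b) applied to both sides and (b') to conclude $p=p'$, after which the value $\nabla_{pa}^a=1/e_*^p$ drops out of the left unit law at $c=a$. The extra identity $\nabla_{pp'}^{p'}=\nabla_{pa}^a$ you obtain by cancellation is harmless but not needed.
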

\begin{proof}
By \eqref{eq:unitsMatR},
\begin{equation}\label{eq:leftunit}
\sum_{p\in E_S}e_*^p\nabla_{pa}^c=\delta_a^c
\end{equation}
for all $a,c\in S$.
The left-hand side expression here is positive if and only if $\nabla_{pa}^c>0$ for some $p\in E_S$, and 
$\delta_a^c$ is nonzero if and only if $a=c$. This proves simultaneously (a) and (b).
The proof of (a') and (b') is analogous.

To prove (c), suppose that $p,q\in E_S$ are such that $\nabla_{pa}^a>0$,
$\nabla_{qa}^a>0$.
By the associative law, for all $a\in S$,
\begin{equation}\label{eq:assocunit}
\sum_{u\in S}\nabla_{ua}^a\nabla_{pq}^u=\sum_{r\in S}\nabla_{pr}^a\nabla_{qa}^r.
\end{equation}
By (a) and (b), $r=a$ is the unique $r\in S$ for which $\nabla_{qa}^r>0$.
Hence the right-hand side of \eqref{eq:assocunit} equals $\nabla_{pa}^a\nabla_{qa}^a$
and, by our assumptions, is positive. Therefore, the left-hand side of
\eqref{eq:assocunit} is positive as well and, in particular, $\nabla_{pq}^u>0$ for
some $u\in S$. Since
$p\in E_S$, $q=u$ by (b). Since $q\in E_S$, $u=p$ by (b'). Therefore, $p=q$.
By the uniqueness of $p$ and the fact that 
\eqref{eq:leftunit} equals $1$ for $a=c$, we see that $e^p_*\nabla_{pa}^a=1$,
which yields \eqref{eq:valueoflunit}.

The proof of (c') is (up to a symmetry) the same as the proof of (c).
\end{proof}
By \Cref{lemma:units}, for every element $a$ of a monoid $S$ in $\MatR$, there is a unique
$s(a)\in E_S$ such that $\nabla_{s(a)a}^a>0$ and a unique $t(a)\in E_S$ such that
$\nabla_{at(a)}^a>0$. We say that $s(a)$ and $t(a)$ are the \emph{source and target
of $a$}, respectively. This terminology is justified by the following
lemma.

\begin{lemma}\label{lemma:sourcesTargets}
Let $(S,\nabla,e)$ be a monoid in $\MatR$, let $a,b,c\in S$ be such that
$\nabla_{ab}^c>0$. Then $t(a)=s(b)$, $s(a)=s(c)$ and $t(a)=t(c)$.
\end{lemma}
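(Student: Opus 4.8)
The plan is to exploit the associative law in the form \eqref{eq:assoc} together with the already-established properties of units from \Cref{lemma:units}. The key observation is that all three claimed equalities will follow by choosing the triple $(a,b,c)$ in \eqref{eq:assoc} so that one of the two factors in a summand forces the relevant unit to appear, and then reading off which unit it must be using parts (b), (b') and the uniqueness statements (c), (c').

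For $t(a)=s(b)$: apply the associative law \eqref{eq:assoc} to the triple $(s(b),a,b)$ — or more transparently, use that $\nabla_{ab}^c>0$ and rewrite $b$ via the source relation $\nabla_{s(b)b}^b>0$. Concretely, consider $\sum_{p}\nabla_{pb}^c\nabla_{a\,s(b)}^p=\sum_q\nabla_{aq}^c\nabla_{s(b)b}^q$. On the right, by \Cref{lemma:units}(a)--(b) applied to $s(b)\in E_S$, the only $q$ with $\nabla_{s(b)b}^q>0$ is $q=b$, so the right side equals $\nabla_{ab}^c\nabla_{s(b)b}^b$, which is positive by hypothesis. Hence the left side is positive, so $\nabla_{a\,s(b)}^p>0$ for some $p$; since $s(b)\in E_S$, \Cref{lemma:units}(b') gives $a=p$, i.e.\ $\nabla_{a\,s(b)}^a>0$. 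But $t(a)$ is by definition the \emph{unique} element of $E_S$ with $\nabla_{a\,t(a)}^a>0$, so $s(b)=t(a)$.

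For $s(a)=s(c)$: start from $\nabla_{s(a)a}^a>0$ and feed the triple $(s(a),a,b)$ into \eqref{eq:assoc}: $\sum_p\nabla_{pb}^d\nabla_{s(a)a}^p=\sum_q\nabla_{s(a)q}^d\nabla_{ab}^q$, and specialize $d=c$. On the left, the unique $p$ with $\nabla_{s(a)a}^p>0$ is $p=a$ (by \Cref{lemma:units}(a)--(b) since $s(a)\in E_S$), so the left side is $\nabla_{ab}^c\nabla_{s(a)a}^a>0$. Thus the right side is positive, giving $\nabla_{s(a)q}^c>0$ for some $q$ with $\nabla_{ab}^q>0$; since $s(a)\in E_S$, \Cref{lemma:units}(b) forces $q=c$, so $\nabla_{s(a)c}^c>0$. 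By uniqueness of the source of $c$ (\Cref{lemma:units}(c)), $s(a)=s(c)$. The equality $t(a)=t(c)$ is handled symmetrically: combine $\nabla_{ab}^c>0$ with $\nabla_{b\,t(b)}^b>0$ via the associative law to obtain $\nabla_{c\,t(b)}^c>0$, so $t(c)=t(b)$; but $t(a)=s(b)$ from the first part, and one shows $s(b)$ and $t(b)$ are related appropriately — actually the cleanest route is to observe that by the already-proven $t(a)=s(b)$ and the analogous statement applied with roles adjusted, or simply to repeat the source-argument on the right-hand side using $t(b)$ in place of $s(a)$.

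I do not expect a serious obstacle here; the only mild subtlety is bookkeeping — making sure each invocation of \Cref{lemma:units} is applied to an element that genuinely lies in $E_S$ (namely $s(b)$, $s(a)$, $t(b)$), and correctly distinguishing parts (b) vs.\ (b') so that the unit is cancelled on the correct side of the product. Once the pattern ``insert a source/target unit, use associativity, read off the surviving index'' is set up for the first equality, the remaining two are immediate variations, using the uniqueness clauses (c) and (c') of \Cref{lemma:units} to identify the resulting unit with $s(\cdot)$ or $t(\cdot)$.
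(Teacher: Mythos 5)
Your proofs of $t(a)=s(b)$ and $s(a)=s(c)$ are correct and essentially identical to the paper's argument: insert a unit, apply the associative law \eqref{eq:assoc}, and identify the surviving index via \Cref{lemma:units}. The only cosmetic difference is the direction of insertion (you insert $s(b)$ and recognize it as $t(a)$, while the paper inserts $t(a)$ and recognizes it as $s(b)$); note also that your opening reference to the triple $(s(b),a,b)$ should be $(a,s(b),b)$, but the displayed instance of \eqref{eq:assoc} is the right one, so this is only a slip of labeling.

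The trouble is the third equality. What your symmetric argument actually establishes --- correctly --- is $t(b)=t(c)$: from $\nabla_{ab}^c>0$ and $\nabla_{b\,t(b)}^b>0$, associativity gives $\nabla_{c\,t(b)}^c>0$, hence $t(b)=t(c)$ by the uniqueness clause of \Cref{lemma:units}. This is also exactly what the paper's (omitted) proof claims, since it says the proofs of $s(a)=s(c)$ and $t(b)=t(c)$ are similar. The identity $t(a)=t(c)$ appearing in the statement is a misprint and is false in general: in the discrete coherent configuration on $X=\{1,2\}$ take $a=\{(1,2)\}$, $b=\{(2,1)\}$, $c=\{(1,1)\}$; then $\nabla_{ab}^c=1>0$, but $t(a)$ is the unit color at vertex $2$ while $t(c)$ is the unit color at vertex $1$. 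Consequently your attempt to bridge from $t(b)=t(c)$ to $t(a)=t(c)$ by asserting that ``$s(b)$ and $t(b)$ are related appropriately'' cannot be repaired --- $s(b)$ and $t(b)$ are distinct units precisely in such examples --- and no variant of the unit-insertion trick will yield $t(a)=t(c)$. The right move was to flag the misprint and prove $t(b)=t(c)$, which is the form in which the lemma is actually used later (e.g.\ in the proof of \Cref{prop:semigroupHom}).
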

\begin{proof}
By associativity,
\[
\sum_{p\in S}\nabla_{at(a)}^p\nabla_{pb}^c=
\sum_{q\in S}\nabla_{aq}^c\nabla_{t(a)b}^q
\]
As $t(a)\in E_S$, \Cref{lemma:units} implies that $\nabla_{at(a)}^p>0$ if and only if $p=a$. Hence
the first sum equals $\nabla_{at(a)}^a\nabla_{ab}^c>0$. Therefore,
the second sum is positive so that  $\nabla_{t(a)b}^q>0$ for some
$q\in S$. By \Cref{lemma:units}, we see that $q=b$ and that $t(a)=s(b)$. The proofs of
$s(a)=s(c)$ and $t(b)=t(c)$ are very similar and are thus omitted.
\end{proof}

\subsection{Coherent configurations and association schemes}

The following definition seems to introduce a notational collision  with our use of
$\nabla$, $E_S$, $s$, and $t$ for monoids in $\MatR$. However, in the next section, it
will turn out that those notations are actually consistent.

\begin{definition}\cite{higman1975coherent}
\label{def:cc}
Let $X$ be a finite set, let $S$ be a system of subsets of $X\times X$
such that
\begin{enumerate}
\item[(CC1)] $S$ is a partition of $X\times X$.
\item[(CC2)] If $p\in S$ and $(x,x)\in p$, then $p$ is a subset of the identity relation $\id_X$.
\item[(CC3)]For $a,b,c\in S$  
 and $(x,y)\in c$, the number $\nabla_{ab}^c$ of $z\in X$ such that $(x,z)\in a$ and $(z,y)\in b$ does not depend on the choice of $(x,y)\in c$.
\item[(CC4)]If $c\in S$, then the opposite relation $c^{-1}=\{(y,x)\colon (x,y)\in c\}\in S$.
\end{enumerate}
Then $(X,S)$ is called {\em a coherent configuration on $X$}. The elements $p\in S$ with $p\subseteq\id_X$ are called {\em units}. The set of all units is denoted by $E_{S}$. The numbers $\nabla_{ab}^c$ are called \emph{the structural constants} of the
coherent configuration $(X,S)$.
A coherent
configuration is called an {\em association scheme} if $\id_X\in S$ (equivalently, $E_S$
is a singleton, containing only $\id_S$). The cardinality of the set $X$ is the \emph{degree} of $(X,S)$ and the
cardinality of the set $S$ is the \emph{rank} of $(X,S)$. The elements of $X$ are
\emph{vertices}. 
\end{definition}

It is instructive to visualize a coherent configuration $(X,S)$ as a complete digraph
on a set $X$, with edges colored by elements of the set $S$. The elements of
$S$ are therefore called \emph{colors}. The number $\nabla_{ab}^c$ is then the number of $(a,b)$
colored walks from the source vertex $x$ of any $c$-colored edge $(x,y)\in c$ to its target vertex $y$:
\[
\xymatrix{
~
&
\cdot
    \ar[rd]^b
\\
x
    \ar[rr]_c
    \ar[ur]^a
&
~
&
y
}
\]
which we sometimes call the $(a,b)$ triangles over $(x,y)$. If $a$ is a color such that
$(x,y)\in a$, we write $a=\langle x,y\rangle$.  A color $p\subseteq\id_X$ is called a
\emph{unit color}, or a \emph{loop color}. 

For every color $a$ of a coherent configuration $(X,S)$, there is a unique unit color
$q\in E_S$ such that $\nabla_{aa^{-1}}^q>0$. 
This $q$ is called \emph{the source of $a$} and is denoted by $s(a)$. 
The source of $a^{-1}$ is called \emph{the target of $a$} and we denote it by $t(a)$.

The number $\nabla_{aa^{-1}}^{s(a)}$ is called
\emph{the valency of a} and is denoted by $\val a$. Note that for every $x$ such that
$(x,y)\in a$ for some $y\in X$,
$\val a$ is the cardinality of the set
\[
\{y\in X:(x,y)\in a\}.
\]

Let $(X,S)$ be a coherent configuration. We say that $(X,S)$ is \emph{homogeneous}, 
or an \emph{association scheme} or simply a \emph{scheme} if $\id_X\in S$, that means, there is only one unit color.
We say that $(X,S)$ is \emph{thin} (or \emph{regular}) if $\val{a}=1$ for every color $a\in
S$. We say that $(X,S)$ is \emph{$\frac{1}{2}$-homogeneous} if every unit color (as a subset
of $X\times X$) has the same cardinality. Clearly, if $(X,S)$ is homogeneous or thin,
then $(X,S)$ is $\frac{1}{2}$-homogeneous. 

Let us mention that a coherent configuration $(X,S)$ is $\frac{1}{2}$-homogeneous
if and only if, for every $a\in S$, $\val{a}=\val{a^{-1}}$.
Indeed, for every 
color $a\in S$ the cardinality of $a\subseteq X\times X$ can be expressed as
\begin{equation}\label{eq:valencies}
|a|=|s(a)|\cdot\val{a}
\end{equation}
and it is clear from (CC4) that $|a|=|a^{-1}|$. Hence $|s(a)|=|s(a^{-1})|$ implies 
$\val{a}=\val{a^{-1}}$. For the opposite implication, whenever  $p,q\in E_S$, we may pick
some color $a$ with $s(a)=p$ and $t(a)=s(a^{-1})=q$ and then cancel the valencies from the
equation \eqref{eq:valencies}.

\begin{remark}
In this paper, we are slightly diverging from the prevailing notations. The usual
notation for the opposite relation is $c^*$ -- this collides with the standard
notation for duals in a compact category.  The usual notation for valency is $n_a$,
we think that $\val a$ is more readable.
\end{remark}

\begin{example}
Let $X$ be a finite set. Put $S=\{\{(x,y)\}:(x,y)\in X\times X\}$. Then $\mathcal
D_X:=(X,S)$ is always a coherent configuration, called \emph{discrete coherent configuration on $X$}.
\end{example}
\begin{example}
The unique coherent configuration on a singleton $X=\{*\}$ is called the \emph{terminal
coherent configuration} and denoted simply by $1$.
\end{example}

Let $(\Gamma,\cdot,e)$ be a group acting on a finite set $X$ from the right, where
$\odot\colon X\times\Gamma\to X$ denotes the action.
We may extend the action to $X\times X$ in an obvious way:
\[
(x_1,x_2)\odot a:=(x_1\odot a,x_2\odot a).
\]
Clearly, this is an action of $\Gamma$, so it induces a decomposition
of $X\times X$ into orbits.
Each of these orbits is a subset of $X\times X$, that means, a relation
on the set $X$ (these relations are sometimes called \emph{orbitals}, or
\emph{2-orbits}). 
This way, from a group acting on a set
we obtain data in the form $(X,(X\times X)/\Gamma)$, where $(X\times X)/\Gamma$ is
the decomposition of $X\times X$ into 2-orbits. 
This system is always a coherent configuration. The 
coherent configurations arising from an action of a group in this way are called
\emph{Schurian coherent configurations}.
    \begin{figure}
        \centering
        \includegraphics[width=0.3\linewidth]{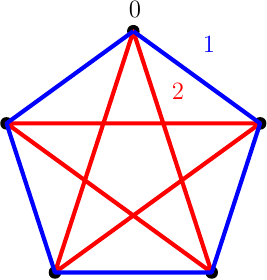}
        \caption{The Schurian scheme $D_5$}
        \label{fig:pentagon}
    \end{figure}
\begin{example}
Consider the dihedral group $D_5$ acting on the $5$ vertices of a regular pentagon in
the usual way. Write $X$ for the set of vertices. This situation
induces a decomposition of $X\times X$ into three classes: the set of all loops $l$ 
(this is the unique unit color, so we obtain an association scheme in this case), the
set of all diagonals $d$ and the set of all sides $f$, see \Cref{fig:pentagon}.
Every color is a
symmetric relation. Some values of the structural constants are
\begin{align*}
\nabla_{dd}^f&=1\\
\nabla_{dd}^d&=0\\
\nabla_{dd}^l&=2
\end{align*}
and the valencies are $\val{l}=1$, $\val{f}=\val{d}=2$.
\end{example}
\begin{example}\label{ex:cayleyScheme}
Every finite group $\Gamma$ acts on itself via multiplication. It is clear that the Schurian coherent configuration given
by this action is a thin association scheme. Moreover, every thin association scheme arises in this way.
\end{example}

\section{Coherent configurations and Frobenius structures}

Consider a pair $(X,S)$, where $X$ is a finite set equipped with a system $S$
of subsets such that $S$ is a partition of $X\times X$, that means, the
condition (CC1) of \Cref{def:cc} is satisfied. For each such a pair we may
construct a non-negative matrix (a $\MatR$ morphism) $\gamma\colon X\otimes S\to X$ given by

\[
\gamma_{xa}^y=
\begin{cases}
1&\text{if $(x,y)\in a$}\\
0&\text{otherwise.}
\end{cases}
\]

Clearly, this matrix $\gamma$ faithfully represents the pair $(X,S)$.

A \emph{pointed magma} in $\MatR$ is a triple $(S,\nabla,\unit)$, 
where $S$ is a finite set, $\nabla\colon S\otimes S\to S$ and $\unit\colon I\to S$.

Note that for a pair $(X,S)$ satisfying (CC3), it is natural to consider the
structural constants $\nabla_{ab}^c$ as elements of a matrix $\nabla\colon
S\otimes S\to S$. Similarly, for $(X,S)$ satisfying (CC2), we may encode the
information about unit colors by a $\MatR$ morphism $\unit\colon I\to S$ given by

\begin{equation}\label{eq:defunit}
\unit_{*}^a
=
\begin{cases}
1 & \text{if $a\subseteq\id_X$}\\
0 & \text{otherwise.}
\end{cases}
\end{equation}

A \emph{module over a pointed magma $(S,\nabla,\unit)$} is a finite set $X$
and a morphism $\gamma\colon X\otimes S\to X$ such that the equations

\begin{equation}
\label{eq:module}
\gamma\circ(\gamma\otimes\id_S)=\gamma\circ(\id_X\otimes\nabla)
\qquad
\gamma\circ(\id_X\otimes e)=\lambda_X
\end{equation}

are satisfied. In string diagrams, this is expressed by
\begin{center}
\vskip 0.5em
\vcbox{\includegraphics{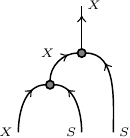}}
\vcbox{ $=$ }
\vcbox{\includegraphics{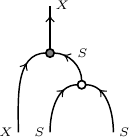}}
\hspace*{3em}
\vcbox{\includegraphics{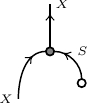}}
\vcbox{ $=$ }
\vcbox{\includegraphics{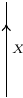}}
\vskip 0.5em
\end{center}

where the gray dot represents the $\gamma$.

\begin{theorem}\label{thm:monAction}
Let $X$ be a finite set, let $S$ be a partition of $X\times X$,
so that $(X,S)$ satisfies (CC1).
Let $\gamma\colon X\otimes S\to X$ be the $\MatR$-morphism
associated with $(S,X)$. 
Then $\gamma$ is a module over some pointed magma $(S,\nabla,\unit)$ if
and only if $(X,S)$ satisfies (CC2) and (CC3) of
\Cref{def:cc}.
Moreover, $(S,\nabla,e)$ is then a monoid.
\end{theorem}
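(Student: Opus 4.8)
The plan is to translate each of the conditions (CC2) and (CC3) into the matrix equations \eqref{eq:module} defining a module, by writing out both sides of those equations in coordinates using the definition of $\gamma$. First I would compute, for $x,y\in X$ and $a,b\in S$, the entry $(\gamma\circ(\gamma\otimes\id_S))_{xab}^y$: by the definition of composition and of $\otimes$ in $\MatR$ this is $\sum_{z\in X}\gamma_{xa}^z\gamma_{zb}^y$, which counts exactly the number of $z$ with $(x,z)\in a$ and $(z,y)\in b$ --- that is, the number of $(a,b)$-triangles over $(x,y)$. On the other side, $(\gamma\circ(\id_X\otimes\nabla))_{xab}^y=\sum_{c\in S}\nabla_{ab}^c\gamma_{xc}^y=\nabla_{ab}^{\langle x,y\rangle}$, since $S$ is a partition and so there is exactly one color $c$ with $(x,y)\in c$. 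Thus the first module equation, for a suitable choice of matrix $\nabla$, holds if and only if the triangle count $\sum_{z}\gamma_{xa}^z\gamma_{zb}^y$ depends only on the color $\langle x,y\rangle$ and not on the representative $(x,y)$ --- which is precisely (CC3); and when it holds, the matrix $\nabla$ is forced to be the matrix of structural constants. Similarly, $(\gamma\circ(\id_X\otimes e))_x^y=\sum_{a\in S}e_*^a\gamma_{xa}^y$, and $(\lambda_X)_x^y=\delta_x^y$; matching these says that for each $x$ the colors $a$ with $e_*^a>0$ and $(x,x)\in a$ contribute total weight $1$, while no such $a$ has $(x,y)\in a$ for $y\neq x$. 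One direction shows that (CC2) (every color containing a diagonal pair is contained in $\id_X$) together with the definition \eqref{eq:defunit} of $\unit$ gives the unit law; conversely, the unit law forces each unit-weighted color to be diagonal-only, which is (CC2), and forces the weights to be the $0/1$ pattern of \eqref{eq:defunit}. I should be slightly careful here that $\unit$ is only assumed to be \emph{some} $\MatR$-morphism $I\to S$ a priori, so part of the argument is to deduce that it must be exactly the one given by \eqref{eq:defunit}, using that the colors partition $X\times X$ and that entries are nonnegative (so no cancellation can occur in the sums).

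Having established the equivalence, it remains to prove the ``moreover'' clause: when $(X,S)$ satisfies (CC1)--(CC3) and $\gamma$ is a module over $(S,\nabla,\unit)$, the triple $(S,\nabla,\unit)$ is in fact a monoid, i.e.\ satisfies the associative law \eqref{eq:assoc} and the two unit laws \eqref{eq:unitsMatR}. The clean way to do this is to use that $\gamma$ is a faithful representation: the module axioms are equations between morphisms out of $X\otimes S\otimes S$ (resp.\ $X\otimes S$), and I want to ``cancel $X$''. Concretely, for associativity I would compare $\gamma\circ(\gamma\otimes\id_S\otimes\id_S)\circ(\id_X\otimes\nabla\otimes\id_S)$ with the analogous composite built from $\id_S\otimes\nabla$, using the first module equation twice (and functoriality of $\otimes$) to rewrite both as $\gamma\circ(\id_X\otimes(\text{the two sides of associativity}))$; since for every color $c$ and every vertex $x$ there is (by CC1, picking any $y$ with $\langle x,y\rangle=c$, and such $y$ exists as $c\neq\emptyset$) a row of $\gamma$ with $\gamma_{xc}^y=1$, the map $\nabla\mapsto\gamma\circ(\id_X\otimes\nabla)$ is injective, so the two triple products of $\nabla$ must agree, giving \eqref{eq:assoc}. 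The unit laws follow the same way from the second module equation combined with the first. The one genuine subtlety --- and the step I expect to be the main obstacle --- is justifying this cancellation: I need that for each $c\in S$ there really is some $x\in X$ and $y\in X$ with $(x,y)\in c$, i.e.\ that no color is empty, which is automatic since $S$ is a \emph{partition} of $X\times X$ and partitions (in the usual sense) have nonempty blocks; and I need that nonnegativity rules out any spurious cancellation when I read off entries. Once that bookkeeping is in place the associativity and unit identities for $\nabla$ drop out of the corresponding module identities essentially verbatim, and $(S,\nabla,\unit)$ is a monoid.
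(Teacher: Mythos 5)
Your proposal is correct. The equivalence half—writing both sides of the two module equations in coordinates, observing that $(\gamma\circ(\gamma\otimes\id_S))_{xab}^y$ counts $(a,b)$-triangles over $(x,y)$ while $(\gamma\circ(\id_X\otimes\nabla))_{xab}^y=\nabla_{ab}^{\langle x,y\rangle}$, and similarly matching $\sum_a e_*^a\gamma_{xa}^y=e_*^{\langle x,y\rangle}$ against $\delta_x^y$—is essentially the paper's own argument, including the point that $\nabla$ and $e$ are then forced to be the structural constants and the indicator of unit colors. Where you genuinely diverge is the ``moreover'' clause: the paper proves associativity of $\nabla$ by a direct double count of colored squares over a fixed edge $(x,y)\in d$, and checks the (right) unit law by a similar combinatorial computation, whereas you deduce the monoid laws formally from the module equations together with injectivity of $N\mapsto\gamma\circ(\id_X\otimes N)$, which holds because every block of the partition is nonempty, so $N_{abc}^d$ can be read off at any $(x,y)\in d$. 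Your route is more structural and handles the left and right unit laws symmetrically (the paper writes out only the right one), at the cost of some bookkeeping with the interchange law and the unitor/triangle coherence identities; the paper's counting argument buys an explicit combinatorial reading of associativity that recurs later in the paper (e.g.\ in the proof of \Cref{thm:connections}). One clause of yours is overstated: it is not true that for every color $c$ and \emph{every} vertex $x$ there exists $y$ with $(x,y)\in c$ (in a non-homogeneous configuration only vertices carrying the unit color $s(c)$ have outgoing $c$-edges). This is harmless, however, because the cancellation argument needs only the weaker fact you state immediately afterwards, namely that each color contains \emph{some} pair $(x,y)$, which is exactly the nonemptiness of partition blocks.
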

\begin{proof}
Let us prove that $(X,S)$ satisfies (CC3) if and only if
\begin{equation}\label{eq:CC3}
\gamma\circ(\gamma\otimes\id_S)=\gamma\circ(\id_X\otimes\nabla).
\end{equation}
Suppose that we have some magma $\nabla\colon
S\otimes S\to S$ such that \eqref{eq:CC3} is satisfied. This means that,
for all $(x,a,b)\in X\otimes S\otimes S$ and $y\in X$,

\begin{equation}\label{eq:modulesqone}
(\gamma\circ(\id\otimes \nabla))_{xab}^y
    =\sum_{(z,c)\in X\otimes S}\gamma_{zc}^y\delta_x^z\nabla_{ab}^c
    =\sum_{c\in S}\gamma_{xc}^y\nabla_{ab}^c
\end{equation}
is equal to
\begin{equation}\label{eq:modulesqtwo}
(\gamma\circ(\gamma\otimes\id))_{xab}^y
    =\sum_{(w,p)\in X\otimes S}\gamma_{wp}^y\gamma_{xa}^w\delta_b^p
    =\sum_{w\in X}\gamma_{wb}^y\gamma_{xa}^w.
\end{equation}

Consider the rightmost term in \eqref{eq:modulesqtwo} first. Obviously, it just
counts the number of $w\in X$ such that $(x,w)\in a$ and $(w,y)\in b$, that is, the number
of $(a,b)$ triangles over $(x,y)$. Further, in the
rightmost term of \eqref{eq:modulesqone}, there is exactly one $c\in S$ such that
$\gamma_{xc}^y$ is nonzero, namely $c=\<x,y\>$,  and then $\gamma_{xc}^y=1$, so
\eqref{eq:modulesqone} equals $\nabla_{ab}^c$, irrespective of $x$ and $y$.  Hence
\eqref{eq:CC3} is equivalent to (CC3), and $\nabla$ is necessarily given by the structural
constants.

Let us prove that the equality 
\begin{equation}\label{eq:CC2}
\gamma\circ(\id_X\otimes e)=\lambda_X
\end{equation}
for some $\unit\colon I\to X$ is equivalent the the fact that $(X,S)$
satisfies (CC2). This equality  means that, for all $u\in X$
and $w\in X$

\begin{equation}\label{eq:moduletrone}
\delta_u^w=(\gamma\circ(\id_X\otimes\unit))_{u*}^w=
    \sum_{(p,v)\in X\otimes S}\gamma_{vp}^w\delta_u^v\unit_{*}^p=
    \sum_{p\in S}\gamma_{up}^w\unit_{*}^p.
\end{equation}
Note that, for all $u\in X$ and $w\in X$, the very definition
of $\gamma$ implies that 
\begin{equation}\label{eq:isnabla}
\sum_{p\in S}\gamma_{up}^w\unit_{*}^p=
\unit_*^c
\end{equation}
where $(u,w)\in c$. Hence equation \eqref{eq:CC2} is equivalent to
\begin{equation}\label{eq:CC2eq}
e_*^c=\delta_u^w,\qquad \forall (u,w)\in c.
\end{equation}
If this is true for some $e:I\to S$, then it is clear that $e_*^c\in\{0,1\}$ for all $c$,
moreover, $e_*^c=1$ if and only if $(u,u)\in c$ for some $u$, in which case we must have
$u=w$ for all $(u,w)\in c$. This means that (CC2) holds and $e$ must be given by
\eqref{eq:defunit}.

On the other hand, suppose that (CC2) is satisfied. We may then define
$\unit$ as in \eqref{eq:defunit}, which clearly satisfies \eqref{eq:CC2eq}.

%
%
%
Let us prove that $\nabla$ is associative; that is for all
$(a,b,c)\in S\otimes S\otimes S$, and $d\in S$, the equality \eqref{eq:assoc} is satisfied.
Fix any $(x,y)\in d$. Then $\nabla_{pc}^d$ is the number of $(p,c)$-triangles over
$(x,y)$. For each of those triangles, $\nabla_{ab}^p$ is the number of $(a,b)$ triangles
along the $p$ edge. Thus, the number $\nabla_{pc}^d\cdot\nabla_{ab}^p$ is the number of squares
of the form
$$
\xymatrix{
\cdot
	\ar[r]^b
&
\cdot
	\ar[d]^c
\\
x
	\ar[u]^a
	\ar[ru]^p
	\ar[r]_d
&
y
}
$$
The sum on the left hand side of \eqref{eq:assoc} then runs over all possible colors $p$ of the
diagonal. Therefore, the left hand side of \eqref{eq:assoc} is equal to the number of all squares of the form
\begin{equation}
\label{eq:squareassoc}
\xymatrix{
\cdot
	\ar[r]^b
&
\cdot
	\ar[d]^c
\\
x
	\ar[u]^a
	\ar[r]_d
&
y
}
\end{equation}
Note that this number does not depend on the particular choice of $(x,y)\in d$.
For $(x,y)\in d$, the number $\nabla_{aq}^d\cdot\nabla_{bc}^q$ is the number of squares of the form
$$
\xymatrix{
\cdot
	\ar[r]^b
	\ar[rd]^q
&
\cdot
	\ar[d]^c
\\
x
	\ar[u]^a
	\ar[r]_d
&
y
}
$$
Summing over all possible colors $q$ of the diagonal gives us the number of all squares of the
form \eqref{eq:squareassoc} and we have proved the associativity of $\nabla$.

Let us prove that $\unit$ is a right unit with respect to $\nabla$. By
\eqref{eq:unitsMatR}, this means that
\begin{equation}
\label{eq:runit}
\sum_{q\in E_S}\nabla_{aq}^b =\delta_a^b,\qquad \forall a,b\in S.
\end{equation} 
So let $a$ and $(x,y)\in a$. Then for $q\in E_S$, $\nabla_{aq}^a>0$ if and only if
$(y,y)\in q$. Since there is a unique unit color with this property, and $(x,y,y)$ is the
unique $(a,q)$-colored walk from $x$ to $y$, the sum on the left hand side of
\eqref{eq:runit} is equal to 1 for $a=b$. 
If $\nabla_{aq}^b>0$ for $b\in S$, then $(y,y)\in q$ and $(x,y)\in b$. Since $S$ is a partition of
$X\times X$, $a\cap b\neq\emptyset$ implies that $a=b$. 
%
\end{proof}

We have proved that every pair of finite sets satisfying (CC1)-(CC3) can be represented
as a module over a monoid in $\MatR$. We say that $(S,\nabla,e)$ is the 
\emph{associated monoid of the coherent configuration $(S,X)$} and that $\gamma$ is
\emph{the module of $(S,X)$}.

A natural question arises: what does (CC4) mean in this context?
This question is answered by the following theorem.

\begin{theorem}\label{thm:frobIsC4}
Let $(X,S)$ be a pair of finite sets satisfying (CC1)-(CC3). Let $f: S\to I$ be such that
$f^*_a>0$ iff $a\subseteq \id_X$. 
Then $f$ is a non-degenerate form on the associated monoid 
$(S,\nabla,\unit)$ if and only if $(X,S)$ is a coherent configuration.
\end{theorem}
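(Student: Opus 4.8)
The plan is to characterize the non-degenerate form condition concretely in $\MatR$ and show it is exactly the statement that $c^{-1}\in S$ for every $c\in S$. By \Cref{thm:nondeg}, $f$ is a non-degenerate form on $(S,\nabla,e)$ if and only if there is a coevaluation $\eta\colon I\to S\otimes S$ making $(S,S,\eta,f\circ\nabla)$ a duality situation; equivalently, as noted in the excerpt, the morphism obtained by bending the leg of $f\circ\nabla$ with the compact-closed cup $\eta_S$ must be an isomorphism $S\to S^\ast=S$. First I would write this bent morphism $\phi\colon S\to S$ explicitly in coordinates: using $(\eta_S)_*^{ab}=\delta_a^b$ one computes $\phi_a^b = \sum_{u}\nabla_{ab}^u\,f_*^u$, i.e. $\phi_a^b = \sum_{u\in E_S}\nabla_{ab}^u$ since $f_*^u>0$ exactly when $u$ is a unit color (and we may take $f_*^u=1$ there without loss of generality, or carry the positive weights along — the positivity is all that matters for the isomorphism criterion via \Cref{coro:chariso}).

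Next I would compute $\phi_a^b$ combinatorially. Fix a color $b$ and pick $(x,y)\in b$; then for a unit color $u$, $\nabla_{ab}^u>0$ forces, by (CC2), that $u\subseteq\id_X$ and that the $(a,b)$-walk from $x$ to $y$ with $(x,y)\in u$ has $x=y$; so $\phi_a^b>0$ iff there exists $z\in X$ with $(x,z)\in a$ and $(z,x)\in b$, which says precisely that $(x,z)\in a$ and $(x,z)\in b^{-1}$ for some $z$, i.e. $a\cap b^{-1}\neq\emptyset$. Now I split into the two implications. If $(X,S)$ is a coherent configuration, then $b^{-1}\in S$ by (CC4), and since $S$ is a partition, $a\cap b^{-1}\neq\emptyset$ happens for exactly one $a$, namely $a=b^{-1}$; hence for each $b$ there is a unique $a$ with $\phi_a^b>0$, and dually (swapping the roles, using $|a|=|a^{-1}|$ and that $(b^{-1})^{-1}=b\in S$) for each $a$ there is a unique $b$ with $\phi_a^b>0$. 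By \Cref{coro:chariso}, $\phi$ is an isomorphism, so $f$ is a non-degenerate form.

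Conversely, suppose $f$ is a non-degenerate form, so $\phi$ is an isomorphism. By \Cref{coro:chariso}, for every color $b$ there is a unique color $a$ with $\phi_a^b>0$; call it $\beta(b)$. From the combinatorial description, $\phi_{\beta(b)}^b>0$ means there is $z$ with $(x,z)\in\beta(b)$ and $(z,x)\in b$ for our chosen $(x,y)\in b$ — but wait, we need $(x,y)\in b$ and the walk forces $y=x$, so actually we should start from the observation that $\phi_a^b>0$ can only occur when $b$ contains a pair of the form $(x,x)$... I need to be careful here: the computation $\phi_a^b=\sum_{u\in E_S}\nabla_{ab}^u$ with $(x,y)\in b$ shows $\phi_a^b$ counts $z$ with $(x,z)\in a$, $(z,y)\in b$ and $(x,y)$ a loop, so it is $0$ unless $b$ is itself... no. Let me instead fix $(u,w)\in$ the relevant loop. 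The cleanest route: $\phi_a^b>0$ iff $\exists x,z$ with $(x,z)\in a$, $(z,x)\in b$, and this is symmetric, so the support of $\phi$ as a relation $S\to S$ is symmetric; an isomorphism in $\MatR$ whose support is symmetric has a symmetric support that is a permutation, i.e. an involution $\beta$ with $\phi_{\beta(b)}^b>0$ and no other nonzero entry in row $\beta(b)$ or column $b$. Now $\phi_{\beta(b)}^b>0$ gives $(x,z)\in\beta(b)$ and $(z,x)\in b$ for some $x,z$; so $\beta(b)$ and $b^{-1}$ share the pair $(x,z)$. The hard part — and the one real obstacle — is concluding from this that $\beta(b)=b^{-1}$ as a \emph{set}, which requires showing the whole relation $b^{-1}$ is a single color; this follows because $b^{-1}=\bigcup_{a}\{(x,z)\in b^{-1}: (x,z)\in a\}$ partitions $b^{-1}$ into the colors $a$ with $a\cap b^{-1}\neq\emptyset$, i.e. exactly the $a$ with $\phi_a^b>0$, of which there is only one, so $b^{-1}=\beta(b)\in S$, giving (CC4). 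I would present the argument with the positive weights $f_*^u$ carried through (they never vanish on $E_S$), so the positivity patterns — and hence every application of \Cref{coro:chariso} — are unaffected.
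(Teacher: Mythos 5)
Your proof is correct in substance, but its route differs from the paper's in the forward direction. The paper proves that a coherent configuration yields a non-degenerate form by explicitly constructing the coevaluation $\eta$ (formula \eqref{eq:etafrob}) and verifying both snake identities \eqref{eq:snakes} directly; this has the side benefit that the explicit $\eta$ is reused right afterwards to write down the comultiplication $\Delta$. You instead run \emph{both} directions through the criterion, stated after \Cref{thm:nondeg}, that $f$ is non-degenerate iff the bent morphism $\phi$ with $\phi_a^b=\sum_{u\in E_S}\nabla_{ab}^u f_u^*$ is an isomorphism, combined with \Cref{coro:chariso}. For the converse this is exactly the paper's argument; for the forward direction your version is shorter, but it leans on an equivalence the paper only asserts as ``easy to check'' and it does not produce the explicit $\eta$ that the paper needs later. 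Your combinatorial reading $\phi_a^b>0\iff a\cap b^{-1}\neq\emptyset$ (which uses (CC2) to know that the color of a pair $(x,x)$ is a unit color) is a clean symmetric reformulation that the paper leaves implicit.

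One small point in your converse: the partition argument by itself only yields $b^{-1}\subseteq\beta(b)$ (the relation $b^{-1}$ is contained in the unique color meeting it), not yet the equality you assert. You already have what is needed to close this: since the support of $\phi$ is symmetric, $\beta$ is an involution, so applying the inclusion to $\beta(b)$ gives $\beta(b)^{-1}\subseteq\beta(\beta(b))=b$, i.e.\ $\beta(b)\subseteq b^{-1}$, and hence $b^{-1}=\beta(b)\in S$. (The paper's own converse is equally terse at the corresponding step, so this is a one-line repair rather than a structural flaw.)
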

\begin{proof}
Suppose that $(X,S)$ is a coherent configuration. We need to prove that
there is some $\eta\colon I\to S\otimes S$ such that
$(S,S,\eta,f\circ\nabla)$ is a duality situation, that is, that the identities in
\eqref{eq:snakes} hold.  The first identity has the form
\begin{equation}\label{eq:leftfrobsnake}
\lambda_S\circ((f\circ\nabla)\otimes\id_S)\circ(\id_S\otimes\eta)\circ\rho_S^{-1}=\id_S.
\end{equation}
On the left-hand side, we have, for all $a,b\in S$,
\begin{equation}\label{eq:lsnakeelement}
(\lambda\circ(f\otimes\id)\circ(\nabla\otimes\id)\circ(\id\otimes\eta)\circ\rho^{-1})_a^b=
\sum_{p,q\in S}f_q^*\nabla_{ap}^q\eta_*^{pb}.
\end{equation}
Observe that $f_q^*\nabla_{ap}^q\eta_*^{pb}>0$ implies that $q\in E_S$ and
$\nabla^q_{ap}>0$. As $q$ is a unit color, this means that $p=a^{-1}$, $q=s(a)$, and
$\nabla^q_{ap}=\nabla^{s(a)}_{aa^{-1}}=\|a\|$. Hence the right-hand side of
\eqref{eq:lsnakeelement}  reads
\[
f_{s(a)}^*\val a\eta_*^{a^{-1}b}.
\]
It is now clear that this is equal to $\delta_a^b$, so that the identity
\eqref{eq:leftfrobsnake} is satisfied,  if 
$\eta\colon I\to S\otimes S$ is given by the rule
\begin{equation}\label{eq:etafrob}
\eta_*^{pb}=
\begin{cases}
\frac{1}{f_{s(b)}^*\val b} & \text{if $p=b^{-1}$}\\
0 & \text{otherwise}.
\end{cases}
\end{equation}
We now check the second identity of \eqref{eq:snakes}. Similarly as above, we need to
prove that for all $a,b\in S$,
\[
\delta_a^b=(\rho_S\circ(\id_S\otimes (f\circ\nabla))\circ(\eta\otimes
\id_S)\circ\lambda_S^{-1})_a^b=\sum_{p,q}f_p^*\nabla_{qa}^p\eta_*^{bq}.
\]
This time, the right-hand side is equal to
\[
f_*^{t(a)}\nabla^{t(a)}_{a^{-1}a}\eta_*^{ba^{-1}}=f_*^{t(a)}\val{a^{-1}}\eta_*^{ba^{-1}}=\delta_a^b,
\]
note that $s(a^{-1})=t(a)$.

Let us now prove the converse. Suppose that $f$ is a non-degenerate form. By the remarks
below \Cref{thm:nondeg}, this means that
\[
M:=(\id_S\otimes\eta_S)\circ(\nabla\otimes\id_{S^*})\circ(f\otimes\id_{S^*})\colon
S\otimes I\to I\otimes S^*
\]
is an isomorphism in $\MatR$, 
here $\eta_S$ denotes the canonical cup from the compact structure. The matrix elements
of this isomorphism are
\[
M_a^b=\sum_{p\in S}\nabla_{ab}^pf_p^*=\sum_{p\in E_S}\nabla_{ab}^p
f_p^*=\nabla_{ab}^{s(a)}f^*_{s(a)},
\]
since the expressions under the sum are nonzero only if $p\in E_S$ and $\nabla^p_{ab}>0$,
which implies that $p=s(a)$. By \Cref{coro:chariso}, we obtain that for every $a\in S$ there is a unique $b\in S$ such
that $\nabla^{s(a)}_{ab}$ is nonzero.  Let $(x,y)\in a$.
As $s(a)$ is a unit color, $(x,x)\in s(a)$ and, by the uniqueness of $b$,
$(y,x)\in b$. Therefore $b$ is an inverse relation to $a$, and we see that the set of
relations $S$ is closed with respect to taking inverses.
\end{proof}

A natural choice of a non-degenerate form in the above theorem is $f=\unit^\dag$, this choice will be used in our paper.
Our main motivation for choosing $e^\dag$ is \Cref{thm:connections} below.

Let $(S,X)$ be a coherent configuration. As $e^\dag$ is a non-degenerate form on $(S,\nabla,e)$, 
the bijective correspondence in \Cref{thm:nondeg}
gives us a Frobenius structure comultiplication.  
\[
\Delta=(\id_S\otimes\nabla)\circ(\eta\otimes\id_S)\circ\rho_S,
\] 
where $\eta$ is given by \eqref{eq:etafrob} (with $f=\unit^\dag$). Unwinding this definition of $\Delta$,
for all $a,b,c\in S$
\begin{equation}\label{eq:comul}
\Delta_a^{bc}=\sum_{p\in
S}\nabla_{pa}^c\eta_*^{bp}=\frac{1}{\val{b^{-1}}}\nabla_{b^{-1}a}^c
\end{equation}
By \cite[Proposition 5.16]{heunen2019categories}, the same $\Delta$ is also given by
the equation
\begin{equation}\label{eq:comulAlt}
\Delta=(\nabla\otimes\id_S)\circ(\id_S\otimes\eta)\circ\lambda_S
\end{equation}
which in our situation gives us
\[
\Delta_a^{bc}=\frac{1}{\val{c}}\nabla_{ac^{-1}}^b
\]
so there are two ways of expressing the $\Delta$ in terms of $\nabla$ and the
valencies.

Let us remark that, alternatively, it is possible to define $\Delta$ by
\eqref{eq:comul} or \eqref{eq:comulAlt} and then prove directly by a
combinatorial argument that $(S,\Delta,e^\dag)$ is a comonoid and that
$\Delta,\nabla$ satisfies the Frobenius laws.

The following theorem shows that some properties of a coherent configuration
are reflected in the properties of the associated Frobenius structure.

\begin{theorem}\label{thm:connections}
For every coherent configuration $(X,S)$, the associated Frobenius structure
is
\begin{enumerate}[(a)]
\item special if and only if $(X,S)$ is thin,
\item symmetric if and only if $(X,S)$ is $\frac{1}{2}$-homogeneous,
\item extra if and only if $(X,S)$ is an association scheme.
\end{enumerate}
\end{theorem}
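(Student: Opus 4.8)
The plan is to verify the three equivalences by coordinate computations in $\MatR$, using the explicit comultiplication $\Delta$ of \eqref{eq:comul}--\eqref{eq:comulAlt}, the counit $c=\unit^\dagger$, and the structural constants $\nabla_{ab}^c$.

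Part (c) is immediate: $c\circ\unit\colon I\to I$ is the scalar $\sum_{a\in S}(\unit_*^a)^2=\sum_{a\in E_S}1=|E_S|$, so $c\circ\unit=\id_I$ holds exactly when $E_S$ is a singleton, i.e.\ when $(X,S)$ is an association scheme. For (b), I would compute $(c\circ\nabla)_{ab}^*=\sum_{d\in E_S}\nabla_{ab}^d$; a triangle over a loop $(u,u)$ consists of edges $(u,z)\in a$ and $(z,u)\in b$, which forces $b=a^{-1}$, so this sum vanishes unless $b=a^{-1}$, in which case it equals $\nabla_{aa^{-1}}^{s(a)}=\val a$. Thus $(c\circ\nabla\circ\sigma_{S,S})_{ab}^*=(c\circ\nabla)_{ba}^*$ equals $\val b$ when $b=a^{-1}$ and $0$ otherwise, so symmetry of the Frobenius structure amounts to $\val a=\val{a^{-1}}$ for all $a\in S$, which by the remark around \eqref{eq:valencies} is precisely $\tfrac12$-homogeneity.

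Part (a) is the substantive one. I would first bring $\nabla\circ\Delta$ into a diagonal-shaped form: substituting \eqref{eq:comul} into $(\nabla\circ\Delta)_a^c=\sum_{b,d}\Delta_a^{bd}\nabla_{bd}^c$ and applying the associative law \eqref{eq:assoc} to the inner sum, $\sum_d\nabla_{bd}^c\nabla_{b^{-1}a}^d=\sum_{\rho}\nabla_{\rho a}^c\nabla_{bb^{-1}}^{\rho}$, gives
\[
(\nabla\circ\Delta)_a^c=\sum_{\rho\in S}N_\rho\,\nabla_{\rho a}^c,\qquad N_\rho:=\sum_{b\in S}\frac{\nabla_{bb^{-1}}^{\rho}}{\val{b^{-1}}}\ \ge\ 0 .
\]
A quick check shows $\nabla_{bb^{-1}}^{\rho}>0$ forces $s(\rho)=t(\rho)$. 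Now suppose $(X,S)$ is thin; then every colour is the graph of a bijection between the vertex sets underlying its source and target loops, so $\nabla_{bb^{-1}}^{\rho}=0$ for non-unit $\rho$, and $N_\rho=|\{b\colon s(b)=\rho\}|$ for $\rho\in E_S$. Combined with $\sum_{\rho\in E_S}N_\rho\,\nabla_{\rho a}^c=\delta_a^c\,N_{s(a)}$ (by \Cref{lemma:units}, which gives $\nabla_{s(a)a}^a=1/\unit_*^{s(a)}=1$ and $\nabla_{\rho a}^c=0$ for every other loop $\rho$), this shows $\nabla\circ\Delta$ is diagonal with $a$-th entry $N_{s(a)}$. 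Finally $\sum_{b\colon s(b)=p}\val b=|X|$ for every loop $p$ (fix $x$ with $(x,x)\in p$ and count each $y\in X$ by the colour $\langle x,y\rangle$), so thinness makes $N_p=|\{b\colon s(b)=p\}|=|X|$, independent of $p$; hence $\nabla\circ\Delta=|X|\cdot\id_S$ and $S$ is special.

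For the converse of (a) I would argue by contraposition. If some valency $\val a\ge 2$, pick $(x,y_1),(x,y_2)\in a$ with $y_1\ne y_2$ and put $c:=\langle y_1,y_2\rangle$, a non-unit colour with $s(c)=t(c)=t(a)$. Taking the intermediate vertex $z=x$ shows $\nabla_{a^{-1}a}^{c}\ge 1$, hence $N_c>0$, and therefore
\[
(\nabla\circ\Delta)_{c^{-1}}^{\,s(c)}\ \ge\ N_c\,\nabla_{cc^{-1}}^{\,s(c)}\ =\ \val c\cdot N_c\ >\ 0 ,
\]
whereas $c^{-1}\ne s(c)$ because $c^{-1}$ is non-unit; hence $\nabla\circ\Delta$ is not a scalar multiple of $\id_S$, so $S$ is not special. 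The main obstacle is part (a): carrying out the associativity reduction cleanly and, in the thin case, verifying that all loops have equal cardinality (equivalently, that the associated groupoid is connected); the remaining steps are routine coordinate calculations.
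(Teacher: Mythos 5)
Your proposal is correct. Parts (b) and (c) are exactly the paper's argument: the same evaluation $e^\dag\circ e=|E_S|$, and the same computation showing $(e^\dag\circ\nabla)_{ab}^*$ and $(e^\dag\circ\nabla\circ\sigma_{S,S})_{ab}^*$ are supported on $b=a^{-1}$ with values $\val a$ and $\val{a^{-1}}$, so symmetry reduces to $\val a=\val{a^{-1}}$, i.e.\ $\tfrac12$-homogeneity. In part (a) you take a genuinely different (though closely related) route: the paper stays with the double sum $\sum_{b}\frac{1}{\val{b^{-1}}}\sum_{c}\nabla_{bc}^d\nabla_{b^{-1}a}^c$ and argues combinatorially in both directions, exhibiting an explicit edge configuration when some valency exceeds $1$, and in the thin case showing every contributing square collapses so that each diagonal entry counts the vertices; you instead first use the associative law to rewrite $\nabla\circ\Delta$ as $\sum_{\rho}N_\rho\nabla_{\rho a}^c$ with $N_\rho=\sum_b\nabla_{bb^{-1}}^{\rho}/\val{b^{-1}}$, and then read both directions off the behaviour of $N_\rho$: thinness forces $N_\rho=0$ for non-unit $\rho$ and $N_p=\sum_{s(b)=p}\val b=|X|$ for unit $p$, giving $\nabla\circ\Delta=|X|\cdot\id_S$ in one stroke, while a valency $\ge 2$ yields a non-unit colour $c$ with $N_c>0$ and hence the positive off-diagonal entry $(\nabla\circ\Delta)_{c^{-1}}^{s(c)}\ge \val c\, N_c$. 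Your reduction is more algebraic and handles the constancy of the diagonal more cleanly; the paper's version is more elementary triangle/square bookkeeping. One remark: the ``main obstacle'' you flag at the end (equal cardinality of all loops, or connectedness of the associated groupoid) is not actually needed anywhere — your identity $\sum_{b\colon s(b)=p}\val b=|X|$ already gives $N_p=|X|$ for every unit colour $p$, and since $S$ partitions $X\times X$ any two loops are joined by some colour in any case.
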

\begin{proof}
\begin{enumerate}[(a)]
\item
Suppose that $S$ is special, meaning that
for all $a,d\in S$,
\begin{multline}\label{eq:eye}
(\nabla\circ\Delta)_a^d=
\sum_{(b,c)\in S\otimes S}\nabla_{bc}^d\Delta_a^{bc}=\\
\sum_{(b,c)\in S\otimes S}\nabla_{bc}^d\frac{1}{\val{b^{-1}}}\nabla_{b^{-1}a}^c=
\sum_{b\in S}\frac{1}{\val{b^{-1}}}\sum_{c\in S}\nabla_{bc}^d\nabla_{b^{-1}a}^c
\end{multline}
is equal to $t\cdot\delta_a^d$, for some $t\in\mathbb R_0^+$.
We want to prove that this implies that $(X,S)$ is thin. Assume the contrary,
and denote the color with valency greater than one by $b^{-1}$. That means, that
there is a vertex that is a source of at least two $b^{-1}$-colored edges, so there
must be some edges and vertices as in the following picture
\[
\xymatrix{
\cdot
    \ar[r]^{b^{-1}}
    \ar[d]_{b^{-1}}
&
\cdot
    \ar@(ur,ul)_a
\\
\cdot
    \ar[ur]_{d}
}
\]
Note that $d$ is a non-unit color and $a$ is a unit color, which means that $a\neq
d$ and that the picture can be drawn as
\[
\xymatrix{
\cdot
    \ar[r]^{b^{-1}}
&
\cdot
    \ar@(ur,ul)_a
\\
\cdot
    \ar[ur]_{d}
    \ar[u]^{b}
}
\]
The existence of such an arrangement of vertices and edges means that
\[
\nabla_{bb^{-1}}^d\nabla_{b^{-1}a}^{b^{-1}}>0,\] 
and this means (put $c:=b^{-1}$)
that the inner sum (and thus the whole sum) in the rightmost term of \eqref{eq:eye} is positive.
Therefore, $(\nabla\circ\Delta)_a^d>0$ for some $a\neq d$, so
$\nabla\circ\Delta$ is not diagonal.

Suppose that $(X,S)$ is thin. Let us prove that all off-diagonal entries
of $\nabla\circ\Delta$ are zero. Assuming the contrary, let $a,d\in S$ be such that
$a\neq d$ and $(\nabla\circ\Delta)_a^d>0$. Then there must be some  $b,c\in S$ for which 
$\nabla_{bc}^d\nabla_{b^{-1}a}^c>0$.
This means that 
for every $(x,y)\in d$ there exist $z,w\in X$ such that we have edges and colors
as in the following picture:
\[
\xymatrix{
z
    \ar[r]^{b^{-1}}
    \ar[rd]^c
&
w
    \ar[d]^a
\\
x
    \ar[r]_d
    \ar[u]^b
&
y
}
\]
The assumption $a\neq d$ means that $x\neq w$. Therefore, $(z,w),(z,x)$ are two
distinct edges colored by $b^{-1}$ with the same source vertex. 
Hence, $\val{b^{-1}}>1$ and $(X,S)$ is not thin.
We have proved that $\nabla\circ\Delta$ is a diagonal matrix. We now claim that every diagonal entry is equal to the
cardinality of $X$. As $(X,S)$ is thin, we have 
\[
 (\nabla\circ\Delta)_a^a=\sum_{b\in S}\sum_{c\in S}\nabla_{bc}^a\nabla_{b^{-1}a}^c.
\]
For every $b\in S$, the inner sum counts the number of squares
\begin{equation}\label{diag:collapsingSquare}
\xymatrix{
z
    \ar[r]^{b^{-1}}
&
w
    \ar[d]^a
\\
x
    \ar[r]_a
    \ar[u]^b
&
y
}
\end{equation}
over an arbitrary fixed $(x,y)\in a$. If $w\neq x$ in one of such squares, 
then $(y,x),(y,w)\in a^{-1}$ are
distinct, hence $\val{a^{-1}}\ge 2$ which contradicts the assumption that $(X,S)$ is
thin. Thus $w=x$ and we see that every square of the type \eqref{diag:collapsingSquare} collapses to
\[
\xymatrix{
z
    \ar@<-.75ex>[r]_-{b^{-1}}
&
x
    \ar[l]_-b
    \ar[r]_-a
&y
}
\]
Summing the number of such collapsed squares over all $b\in S$ actually means to
count the number of all $z\in X$, because each $z\in X$ gives us exactly one such
collapsed square. Therefore, $(\nabla\circ\Delta)_a^a=|X|$ for every $a\in S$ and we have proved that
the Frobenius structure $S$ is special.
\item
For every $a,b\in S$
\begin{equation}\label{eq:symmetrysigma}
(e^\dag\circ\nabla\circ\sigma_{S,S})_{ab}^*=
\sum_{p\in{E_S}}\nabla_{ba}^p=
\begin{cases}
\val{a^{-1}}&b=a^{-1}\\
0&\text{otherwise}
\end{cases}
\end{equation}
and 
\begin{equation}\label{eq:symmetrynosigma}
(e^\dag\circ\nabla)_{ab}^*=
\sum_{p\in{E_S}}\nabla_{ab}^p=
\begin{cases}
\val{a}&b=a^{-1}\\
0&\text{otherwise.}
\end{cases}
\end{equation}
Clearly, the equality of \eqref{eq:symmetrysigma} and \eqref{eq:symmetrynosigma}
is equivalent to $\val{a}=\val{a^{-1}}$, for all $a\in S$.
\item
It suffices to observe that $e^\dag\circ e$ is a $1\times 1$ matrix containing the number
\[
\sum_{p\in S}(e^\dag)_p^*e_*^p=\sum_{p\in E_S}1,
\]
which is equal to the cardinality of $E_S$.
\end{enumerate}
\end{proof}

\begin{remark} Note that the above results depend crucially on the choice of the
non-degenerate form $f=\unit^\dag$. Other choices yield different Frobenius structures
with different properties. For example, as in the proof of (b) we obtain that the
structure associated with a non-degenerate form $f$ is symmetric if and only if  
\[
f_{s(a)}^*\val a=f^*_{s(a^{-1})}\val{a^{-1}},\qquad a\in S.
\]
Putting 
\[
f_p^*=
\begin{cases}
    |p| & p\in E_S\\
    0 & \text{otherwise}
\end{cases}
\] gives us 
\[
f^*_{s(a)}\val a=|s(a)|\cdot\val a=|a|=|a^{-1}|=f^*_{s(a^{-1})}\val{a^{-1}},
\]
so that this choice leads to a symmetric Frobenius structure, for every coherent
configuration.
\end{remark}

\begin{theorem}
For every coherent configuration $(X,S)$, the Nakayama
automorphism $\alpha\colon S\to S$ 
of the associated Frobenius structure is a diagonal matrix given
by the rule
\[
\alpha_a^q=
\begin{cases}
\frac{\val{a}}{\val{a^{-1}}}&q=a\\
0&\text{otherwise.}
\end{cases}
\]
\end{theorem}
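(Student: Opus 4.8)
The Nakayama automorphism is, by the result recalled just above \eqref{eq:nakayama}, the unique morphism $\alpha\colon S\to S$ satisfying $e^\dagger\circ\nabla\circ(\id_S\otimes\alpha)\circ\sigma_{S,S}=e^\dagger\circ\nabla$. In fact this equation already determines $\alpha$ among \emph{all} $\MatR$-morphisms $S\to S$, not just automorphisms: currying $e^\dagger\circ\nabla\circ\sigma_{S,S}$ yields, by \eqref{eq:symmetrysigma}, the matrix $a\mapsto\val{a^{-1}}\cdot a^{-1}$, which is an isomorphism by \Cref{coro:chariso}, so the left-hand side of \eqref{eq:nakayama} depends injectively on $\alpha$. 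Hence the plan is simply to substitute the diagonal matrix of the statement into \eqref{eq:nakayama} and check the equality; uniqueness then also guarantees a posteriori that this $\alpha$ is a monoid automorphism.

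First I would reuse the element-wise formulas already computed in the proof of \Cref{thm:connections}(b). By \eqref{eq:symmetrynosigma},
\[
(e^\dagger\circ\nabla)_{ab}^*=\sum_{p\in E_S}\nabla_{ab}^p=\begin{cases}\val a & b=a^{-1}\\ 0 & \text{otherwise,}\end{cases}
\]
and by \eqref{eq:symmetrysigma},
\[
(e^\dagger\circ\nabla\circ\sigma_{S,S})_{ab}^*=\sum_{p\in E_S}\nabla_{ba}^p=\begin{cases}\val{a^{-1}} & b=a^{-1}\\ 0 & \text{otherwise.}\end{cases}
\]
Next I would track the effect of inserting $\id_S\otimes\alpha$ after the swap. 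Reading the column index $(a,b)$ of $e^\dagger\circ\nabla\circ(\id_S\otimes\alpha)\circ\sigma_{S,S}$ through $\sigma_{S,S}$ and then through $\id_S\otimes\alpha$, the factor $\alpha$ acts on the slot carrying the original index $a$; since $\alpha$ is diagonal, this just rescales the $(a,b)$-entry by $\alpha_a^a=\val a/\val{a^{-1}}$. Therefore
\[
(e^\dagger\circ\nabla\circ(\id_S\otimes\alpha)\circ\sigma_{S,S})_{ab}^*=\frac{\val a}{\val{a^{-1}}}\sum_{p\in E_S}\nabla_{ba}^p=\begin{cases}\dfrac{\val a}{\val{a^{-1}}}\cdot\val{a^{-1}}=\val a & b=a^{-1}\\[4pt] 0 & \text{otherwise,}\end{cases}
\]
which coincides with $(e^\dagger\circ\nabla)_{ab}^*$. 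Thus \eqref{eq:nakayama} holds for the stated $\alpha$, and by the uniqueness noted above, $\alpha$ is the Nakayama automorphism.

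The only delicate point is the index bookkeeping: keeping straight, after composing with $\sigma_{S,S}$, onto which tensor factor $\alpha$ acts and hence which index appears in the scalar $\alpha_a^a$. Everything else is a direct substitution into the two formulas from \Cref{thm:connections}(b), so no real obstacle remains.
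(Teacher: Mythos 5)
Your proposal is correct and follows essentially the same route as the paper: substitute the stated diagonal matrix into \eqref{eq:nakayama} and verify the equality entrywise using the formulas \eqref{eq:symmetrysigma} and \eqref{eq:symmetrynosigma} from the proof of \Cref{thm:connections}(b), relying on the (unique) determination of the Nakayama automorphism by that equation. Your explicit injectivity/currying argument for uniqueness is a small addition the paper leaves implicit, but it does not change the substance of the proof.
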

\begin{proof}
It suffices to prove that the morphism $\alpha$ given by the above rule satisfies \eqref{eq:nakayama}. On the left-hand side
of \eqref{eq:nakayama} we obtain, for all $(a,b)\in S\otimes S$,
\begin{multline*}
(e^\dag\circ\nabla\circ(\id_S\otimes\alpha)\circ\sigma_{S,S})_{ab}^*=\\
\sum_{\substack{q\in S\\r\in E_S}}\alpha_a^q\nabla_{bq}^r=
\sum_{r\in E_S}\frac{\val{a}}{\val{a^{-1}}}\nabla_{ba}^r=
\begin{cases}
\frac{\val{a}}{\val{a^{-1}}}\val{a^{-1}}&b=a^{-1}\\
0&\text{otherwise}
\end{cases}
\end{multline*}
and this is equal to \eqref{eq:symmetrynosigma}, which is the same as the right-hand
side of \eqref{eq:nakayama}.

\end{proof}
\begin{corollary}\label{coro:nakayama}
Let $(X,S)$ be a coherent configuration. For every $a,b,c\in S$ such that
$\nabla_{ab}^c>0$,
\[
\frac{\val{a}}{\val{a^{-1}}}\cdot
\frac{\val{b}}{\val{b^{-1}}}=
\frac{\val{c}}{\val{c^{-1}}}.
\]
\end{corollary}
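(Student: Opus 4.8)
The plan is to read the corollary off from the explicit description of the Nakayama automorphism obtained in the theorem immediately above, combined with the general fact (recalled in the preliminaries) that $\alpha$ is not merely an endomorphism but an \emph{automorphism of the monoid reduct} $(S,\nabla,e)$ of the associated Frobenius structure. In particular $\alpha$ preserves multiplication, so that $\nabla\circ(\alpha\otimes\alpha)=\alpha\circ\nabla$.

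The key step is then to evaluate both sides of this equality on matrix entries. Since, by the preceding theorem, $\alpha$ is the diagonal matrix with $\alpha_a^a=\val a/\val{a^{-1}}$ and all other entries zero, the only surviving terms give, for every $a,b,c\in S$,
\[
\frac{\val a}{\val{a^{-1}}}\cdot\frac{\val b}{\val{b^{-1}}}\,\nabla_{ab}^c
=\bigl(\nabla\circ(\alpha\otimes\alpha)\bigr)_{ab}^c
=\bigl(\alpha\circ\nabla\bigr)_{ab}^c
=\frac{\val c}{\val{c^{-1}}}\,\nabla_{ab}^c .
\]
Cancelling the common factor $\nabla_{ab}^c$, which is permissible precisely when $\nabla_{ab}^c>0$, yields the asserted identity.

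I expect no real obstacle here: the corollary is essentially the statement ``$\alpha$ is a monoid homomorphism'' rewritten in coordinates. The one point worth a sentence of care is the cancellation of $\nabla_{ab}^c$, and the remark that the hypothesis $\nabla_{ab}^c>0$ of the corollary is exactly what makes it valid (for triples with $\nabla_{ab}^c=0$ the multiplicativity relation is vacuous). If one wished to avoid invoking the abstract automorphism property, an alternative would be a direct count: fixing edges that realise $\nabla_{ab}^c>0$ and using the source/target bookkeeping of \Cref{lemma:sourcesTargets} together with the cardinality relation \eqref{eq:valencies} to compare $(a,b)$-triangles over a $c$-edge with those over a $c^{-1}$-edge; but the argument via $\alpha$ is shorter, and it is the one I would write.
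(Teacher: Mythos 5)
Your proposal is correct and follows essentially the same route as the paper: the paper's proof also invokes the fact that the Nakayama automorphism preserves $\nabla$ (i.e.\ $\nabla\circ(\alpha\otimes\alpha)=\alpha\circ\nabla$), reduces both sides to the single surviving entry $\nabla_{ab}^c\cdot\frac{\val a}{\val{a^{-1}}}\cdot\frac{\val b}{\val{b^{-1}}}=\nabla_{ab}^c\cdot\frac{\val c}{\val{c^{-1}}}$ using the diagonal form of $\alpha$, and cancels $\nabla_{ab}^c>0$. Your remark about the cancellation being exactly where the hypothesis enters matches the paper's argument.
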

\begin{proof}
The Nakayama automorphism $\alpha\colon S\to S$ preserves $\nabla$:
\begin{center}
\vskip 0.5em
\vcbox{\includegraphics{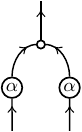}}
\vcbox{ $=$ }
\vcbox{\includegraphics{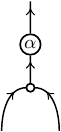}}
\end{center}
meaning that, for all $(a,b)\in S\otimes S, c\in S$
\[
\sum_{(q,r)\in S\otimes S}\nabla_{qr}^c\alpha_a^q\alpha_b^r=
\sum_{p\in S}\nabla_{ab}^p\alpha_p^{c}.
\]
Both  sums are reduced to a single term
\[
\nabla_{ab}^c\cdot\frac{\val{a}}{\val{a^{-1}}}.
\frac{\val{b}}{\val{b^{-1}}}=
\nabla_{ab}^c\cdot\frac{\val{c}}{\val{c^{-1}}}
\]
and we may cancel $\nabla_{ab}^c>0$ from both sides.

\end{proof}
\section{Morphisms and admissible morphisms}

In this section we will use $(X,S,\gamma)$ to denote the fact that
$\gamma$ is the module associated with the coherent configuration $(X,S)$. We do not 
distinguish between mappings in $\FinSet$ and their matrix representations
in $\MatR$.

The following definition is a straightforward extension of the definition of a morphism of association
schemes from \cite{hanaki2010acategory}.
\begin{definition}

A {\em  morphism} of coherent configurations $(X,S)\to (X',S')$ is
a pair of mappings $\phi_0\colon X\to X'$ and $\phi_1\colon S\to S'$ such that
for all $x,y\in X$ and $a\in S$, $(x,y)\in a$ implies $(\phi_0(x),\phi_0(y))\in \phi_1(a)$.

\end{definition}

We write $(\phi_0,\phi_1)\colon (X,S)\to (X',S')$ for a morphism of
coherent configurations.

\begin{proposition}\label{prop:laxmonoidmorphism}
Let $(\phi_0,\phi_1)\colon (X,S)\to (X',S')$ be a morphism of coherent
configurations.
\begin{enumerate}[(a)]
\item For every $p\in E_S$, $\phi_1(p)\in E_{S'}$.
\item For every $a\in S$, $\phi_1(s(a))=s(\phi_1(a))$.
\item For every $a\in S$, $\phi_1(t(a))=t(\phi_1(a))$.
\item For all $a,b,c\in S$, if $\nabla_{ab}^c>0$, then 
$\nabla_{\phi_1(a)\phi_1(b)}^{\phi_1(c)}>0$.
\item For all $a\in S$, $\phi_1(a^{-1})={\phi_1(a)}^{-1}$.
\end{enumerate}
\end{proposition}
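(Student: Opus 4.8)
The plan is to verify each of the five items directly from the combinatorial definition of a morphism, using the characterizations of units, sources, targets and structure constants from \Cref{def:cc} together with the basic facts about modules over monoids already established. Throughout I would fix a morphism $(\phi_0,\phi_1)\colon(X,S)\to(X',S')$ and repeatedly use the defining property that $(x,y)\in a$ implies $(\phi_0(x),\phi_0(y))\in\phi_1(a)$; note also that since $S$ partitions $X\times X$ and every $a\in S$ is nonempty, $a=\langle x,y\rangle$ for some pair, so the morphism condition gives information about $\phi_1(a)$ by picking a convenient representative pair in $a$.

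For (a), take $p\in E_S$, so $p\subseteq\id_X$; pick $(x,x)\in p$, then $(\phi_0(x),\phi_0(x))\in\phi_1(p)$, and by (CC2) any color containing a diagonal element is contained in $\id_{X'}$, hence $\phi_1(p)\in E_{S'}$. For (d), if $\nabla_{ab}^c>0$, pick $(x,y)\in c$ and a witness $z\in X$ with $(x,z)\in a$, $(z,y)\in b$; applying $\phi_0$ gives $(\phi_0(x),\phi_0(z))\in\phi_1(a)$, $(\phi_0(z),\phi_0(y))\in\phi_1(b)$, and $(\phi_0(x),\phi_0(y))\in\phi_1(c)$, so $\phi_0(z)$ witnesses a $(\phi_1(a),\phi_1(b))$-triangle over an edge of color $\phi_1(c)$, i.e.\ $\nabla_{\phi_1(a)\phi_1(b)}^{\phi_1(c)}>0$. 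For (e), pick $(x,y)\in a$, so $(y,x)\in a^{-1}$; then $(\phi_0(y),\phi_0(x))\in\phi_1(a^{-1})$ and $(\phi_0(x),\phi_0(y))\in\phi_1(a)$, so $(\phi_0(y),\phi_0(x))\in\phi_1(a)^{-1}$; since $\phi_1(a^{-1})$ and $\phi_1(a)^{-1}$ share the element $(\phi_0(y),\phi_0(x))$ and distinct colors are disjoint, they coincide.

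For (b), recall $s(a)$ is characterized as the unique unit color $q$ with $\nabla_{aa^{-1}}^q>0$ (equivalently $(x,x)\in s(a)$ whenever $(x,y)\in a$). Pick $(x,y)\in a$; then $(x,x)\in s(a)$ gives $(\phi_0(x),\phi_0(x))\in\phi_1(s(a))$, and $(\phi_0(x),\phi_0(y))\in\phi_1(a)$ shows $\phi_0(x)$ is a source of an edge of color $\phi_1(a)$, so $(\phi_0(x),\phi_0(x))\in s(\phi_1(a))$. By (a), $\phi_1(s(a))\in E_{S'}$, and as it shares the diagonal element $(\phi_0(x),\phi_0(x))$ with $s(\phi_1(a))$, the two colors are equal. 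Item (c) follows from (b) and (e) since $t(a)=s(a^{-1})$: $\phi_1(t(a))=\phi_1(s(a^{-1}))=s(\phi_1(a^{-1}))=s(\phi_1(a)^{-1})=t(\phi_1(a))$.

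I do not expect any serious obstacle here; the proof is entirely bookkeeping with representative pairs and the disjointness of colors. The only point demanding mild care is making sure that each time we conclude two colors are equal we have exhibited a common element and invoked that $S'$ is a partition (CC1); and in (b) we must use (a) to know $\phi_1(s(a))$ is actually a unit color before comparing it to $s(\phi_1(a))$, rather than arguing the comparison prematurely. Organizing the proof so that (a), (d), (e) come first and then (b), (c) are deduced keeps the dependencies clean.
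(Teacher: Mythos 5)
Your proof is correct and follows essentially the same approach as the paper: pick a representative pair in the relevant color, push it through $(\phi_0,\phi_1)$, and conclude equality of colors from the partition property; the paper writes out only (a) and (b) and declares (c)--(e) "very similar," which your argument fills in (with (c) neatly deduced from (b) and (e) via $t(a)=s(a^{-1})$). One minor remark: in (b) the appeal to (a) is not actually needed, since two colors of $S'$ sharing the element $(\phi_0(x),\phi_0(x))$ must already coincide by (CC1).
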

\begin{proof}~
\begin{enumerate}[(a)]
\item If $p\in E_S$, then $(x,x)\in p$ for some $x\in X$. If $(x,x)\in p$, then $(\phi_0(x),\phi_0(x))\in\phi_1(p)$, so $\phi_1(p)\in
E_S$.
\item If $(x,y)\in a$, then $(x,x)\in s(a)$ and
$(\phi_0(x),\phi_0(x))\in\phi_1(s(a))$. On the other hand, 
$(\phi_0(x),\phi_0(y))\in\phi_1(a)$, so $(\phi_0(x),\phi_0(x))\in s(\phi_1(a))$.
\end{enumerate}
The remaining statements (c)-(e) can be proved in a very similar way.
\end{proof}

Let us link the notion of morphism with the 2-structure of $\MatR$. The (a)
and (d) of \Cref{prop:laxmonoidmorphism} mean that $\phi_1$ is a \emph{lax morphism}
of monoids in $\MatR$, that means that the diagrams
\[
\begin{diagram}
I
    \ar@{}[rrd]|(.35){{\gtrsim}}
    \ar[r]^{e_S}
    \ar[rd]_{e_{S'}}
&
S
    \ar[d]^{\phi_1}
\\
~
&
S'
&
~
\end{diagram}
\quad
\begin{diagram}
S\otimes S
    \gscellrd    
    \ar[r]^{\nabla}
    \ar[d]_{\phi_1\otimes\phi_1}
&
S
    \ar[d]^{\phi_1}
\\
S'\otimes S'
    \ar[r]_{\nabla}
&
S'
\end{diagram}
\]
commute.
The pair of mappings $(\phi_0,\phi_1)$ then is a lax module morphism, as the following
proposition shows.

\begin{proposition}\label{prop:charmorphism}

Let $(X,S,\gamma)$, $(X',S',\gamma')$ be coherent configurations. A pair
of mappings $\phi_0\colon X\to X'$ and $\phi_1\colon S\to S'$
is a morphism if and only if
\begin{equation}\label{diag:morphism}
\begin{diagram}
X\otimes S
    \ar[r]^\gamma
    \ar[d]_{\phi_0\otimes \phi_1}
    \gscellrd
&
X
    \ar[d]^{\phi_0}
\\
X'\otimes S'
    \ar[r]_{\gamma'}
&
X'
\end{diagram}
\end{equation}
\end{proposition}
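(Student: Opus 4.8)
The plan is to show that the pair-of-mappings condition in the definition of morphism is literally the same condition as the element-wise inequality expressed by the commuting square \eqref{diag:morphism} in the 2-structure of $\MatR$. So first I would unwind both paths around the square at the level of matrix entries. For $(x,a)\in X\otimes S$ and $y'\in X'$, the lower-left path gives $(\phi_0\circ\gamma)_{xa}^{y'}=\sum_{y\in X}\gamma_{xa}^y (\phi_0)_y^{y'}$, which, since $\gamma_{xa}^y$ is $1$ exactly when $(x,y)\in a$ (and $0$ otherwise), equals the number of $y\in X$ with $(x,y)\in a$ and $\phi_0(y)=y'$. The upper-right path gives $(\gamma'\circ(\phi_0\otimes\phi_1))_{xa}^{y'}=\sum_{(x'',a')}\gamma'_{x''a'}^{y'}(\phi_0)_x^{x''}(\phi_1)_a^{a'}=\gamma'_{\phi_0(x)\phi_1(a)}^{y'}$, which is $1$ when $(\phi_0(x),y')\in\phi_1(a)$ and $0$ otherwise.

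Next I would translate $\gscellrd$ (the $\lesssim$ relation) into what it says about these two entries: the square lax-commutes precisely when, for every $(x,a)$ and $y'$, positivity of the lower-left entry implies positivity of the upper-right entry. Reading off the computation above, the lower-left entry is positive iff there is some $y\in X$ with $(x,y)\in a$ and $\phi_0(y)=y'$; the upper-right entry is positive iff $(\phi_0(x),y')\in\phi_1(a)$. So lax-commutativity says: for all $x,y\in X$ and $a\in S$, $(x,y)\in a$ implies $(\phi_0(x),\phi_0(y))\in\phi_1(a)$ — which is exactly the definition of a morphism. Taking $y'=\phi_0(y)$ recovers the morphism condition from the diagram, and conversely the morphism condition forces the implication for every $y'$, since any witness $y$ to positivity of the left entry has $\phi_0(y)=y'$.

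This argument is essentially a one-line matrix-entry unwinding once both sides are written out, so there is no real obstacle; the only thing to be careful about is the direction of the inequality $\lesssim$ and making sure the quantifier over $y'\in X'$ is handled correctly (positivity of a sum of $0/1$ terms is positivity of at least one term, so the existential quantifier over $y$ matches up cleanly with the morphism condition, which is itself an implication quantified over all $x,y,a$). I would present it in two short paragraphs: one computing the two composites entrywise, one observing that $f\lesssim g$ between them is verbatim the morphism condition, and conclude.

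\begin{proof}
Fix $(x,a)\in X\otimes S$ and $y'\in X'$. For the lower-left composite,
\[
(\phi_0\circ\gamma)_{xa}^{y'}=\sum_{y\in X}\gamma_{xa}^y\,(\phi_0)_y^{y'}
=\bigl|\{y\in X: (x,y)\in a,\ \phi_0(y)=y'\}\bigr|,
\]
using that $\gamma_{xa}^y=1$ iff $(x,y)\in a$ and that $(\phi_0)_y^{y'}=1$ iff $\phi_0(y)=y'$. In particular this entry is positive if and only if there is some $y\in X$ with $(x,y)\in a$ and $\phi_0(y)=y'$. For the upper-right composite, since $\phi_0$ and $\phi_1$ are (matrices of) mappings,
\[
(\gamma'\circ(\phi_0\otimes\phi_1))_{xa}^{y'}
=\sum_{(x'',a')\in X'\otimes S'}\gamma'_{x''a'}^{y'}\,(\phi_0)_x^{x''}\,(\phi_1)_a^{a'}
=\gamma'_{\phi_0(x)\,\phi_1(a)}^{y'},
\]
which equals $1$ if $(\phi_0(x),y')\in\phi_1(a)$ and $0$ otherwise; so this entry is positive if and only if $(\phi_0(x),y')\in\phi_1(a)$.

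By the definition of the $2$-structure on $\MatR$, the diagram \eqref{diag:morphism} commutes (i.e.\ $\phi_0\circ\gamma\lesssim\gamma'\circ(\phi_0\otimes\phi_1)$) precisely when, for all $(x,a)\in X\otimes S$ and all $y'\in X'$,
\[
(\phi_0\circ\gamma)_{xa}^{y'}>0\ \implies\ (\gamma'\circ(\phi_0\otimes\phi_1))_{xa}^{y'}>0,
\]
that is, whenever there exists $y\in X$ with $(x,y)\in a$ and $\phi_0(y)=y'$, then $(\phi_0(x),y')\in\phi_1(a)$. Taking $y'=\phi_0(y)$ shows this forces $(x,y)\in a\implies(\phi_0(x),\phi_0(y))\in\phi_1(a)$ for all $x,y\in X$, $a\in S$, i.e.\ $(\phi_0,\phi_1)$ is a morphism. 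Conversely, if $(\phi_0,\phi_1)$ is a morphism and the left entry at $(x,a,y')$ is positive, pick a witness $y$ with $(x,y)\in a$ and $\phi_0(y)=y'$; then $(\phi_0(x),\phi_0(y))=(\phi_0(x),y')\in\phi_1(a)$, so the right entry is positive. Hence the morphism condition is equivalent to the commutativity of \eqref{diag:morphism}.
\end{proof}
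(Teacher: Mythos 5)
Your proof is correct and follows essentially the same route as the paper: compute both composites entrywise, characterize when each entry is positive, and observe that the lax-commutativity condition $\phi_0\circ\gamma\lesssim\gamma'\circ(\phi_0\otimes\phi_1)$ is verbatim the morphism condition. The handling of the quantifier over $y'$ and the witness $y$ matches the paper's two-direction argument exactly.
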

\begin{proof}

We see that
\[
(\phi_0\circ\gamma)_{xa}^{y'}=
\sum_{y\in X}(\phi_0)_y^{y'}\gamma_{xa}^y
\]
is nonzero if and only if there is $y\in X$ such that $(x,y)\in a$ and $\phi_0(y)=y'$.
On the other hand,
\[
(\gamma'\circ(\phi_0\otimes \phi_1))_{xa}^{y'}=
\sum_{(w,p)\in X'\otimes S'}(\gamma')_{wp}^{y'}(\phi_0)_x^w(\phi_1)_a^p
\]
is nonzero if and only if $(\phi_0(x),y')\in \phi_1(a)$.

$(\Rightarrow)$: Suppose that \eqref{diag:morphism} holds. Assume $(x,y)\in a$,
we need to prove that $(\phi_0(x),\phi_0(y))\in \phi_1(a)$. Putting $y'=\phi_0(y)$
means that $(\phi_0)_{y}^{y'}\gamma_{xa}^y>0$, hence
$(\phi_0\circ\gamma)_{xa}^{y'}>0$. By the diagram, this means that
$(\gamma\circ(\phi_0\otimes \phi_1))_{xa}^{y'}>0$, so
$(\phi_0(x),y')=(\phi_0(x),\phi_0(y))\in \phi_1(a)$.

$(\Leftarrow)$: Suppose that $(\phi_0,\phi_1)$ is a morphism.
Assume that $(\phi_0\circ\gamma)_{xa}^{y'}>0$, this means that
there is $y\in X$ such that $\phi_0(y)=y'$ and $(x,y)\in a$. As $(x,y)\in a$,
$(\phi_0(x),\phi_0(y))\in \phi_1(a)$. Putting $w=\phi_0(x)$ and $p=\phi_1(a)$ we obtain
$\gamma_{wp}^{y'}=1$, so $\gamma_{wp}^z(\phi_0)_x^w(\phi_1)_a^p>0$ and thus
$(\gamma\circ(\phi_0\otimes \phi_1))_{xa}^{y'}>0$.

\end{proof}

In \cite{french2013functors} C. French introduced and studied a subclass of morphisms of association schemes called \emph{admissible morphisms}.
When we attempted to extend this notion to coherent configurations, 
it turned out that there are (at least) two appropriate generalizations of the
original notion: \Cref{def:admorphism} and \Cref{def:stradmorphism}. When restricted
to association schemes, both definitions coincide (\Cref{coro:defsCoincide}).
Let us consider the more general definition first.

\begin{definition}\label{def:admorphism}
An {\em admissible morphism} of coherent configurations $(X,S)\to (X',S')$ is
a morphism $(\phi_0,\phi_1)\colon (X,S)\to(X',S')$ such
that for any $x\in X$, $y'\in X'$, $a\in S$ such that $(x,a)\in\supp(\gamma)$ and
$(\phi_0(x),y')\in \phi_1(a)$,
there exists $y\in X$ such that $\phi_0(y)=y'$ and $(x,y)\in a$.
\end{definition}

The notion of a support endomorphism \eqref{eq:supportendo} allows us to express
\Cref{def:admorphism} in the form of a commutative diagram, as the following
proposition shows.

\begin{proposition}\label{prop:admorphism}
A morphism $(\phi_0,\phi_1)\colon (X,S,\gamma)\to(X',S',\gamma')$ of coherent configurations
is admissible if and only if
\begin{equation}\label{diag:admorphism}
\begin{diagram}
X\otimes S
    \ar[r]^-\gamma
    \ar[d]_{\based{\gamma}}
    \simeqcellrdd
&
X
    \ar[dd]^{\phi_0}
\\
X\otimes S
    \ar[d]_{\phi_0\otimes \phi_1}
    &
    ~
\\
X'\otimes S'
    \ar[r]_-{\gamma'}
&
X'
\end{diagram}
\end{equation}
\end{proposition}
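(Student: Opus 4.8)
The plan is to unwind the proposed commuting diagram \eqref{diag:admorphism} at the level of matrix entries and check that the resulting numerical condition on the entries is precisely the condition in \Cref{def:admorphism}. Concretely, I would first observe that the left-hand composite $\phi_0\circ\gamma$ and the lower composite $\gamma'\circ(\phi_0\otimes\phi_1)\circ\based{\gamma}$ both have domain $X\otimes S$ and codomain $X'$, so commutativity of \eqref{diag:admorphism} amounts to the equality of these two morphisms. However, the diagram is decorated with $\simeq$, not with equality, so what we actually must prove is that the two morphisms have the \emph{same support} (are related by the $2$-structure isomorphism $\simeq$ of \Cref{sec:2.5}), i.e.\ that for all $x\in X$, $a\in S$, $y'\in X'$,
\[
(\phi_0\circ\gamma)_{xa}^{y'}>0
\quad\Longleftrightarrow\quad
\bigl(\gamma'\circ(\phi_0\otimes\phi_1)\circ\based{\gamma}\bigr)_{xa}^{y'}>0.
\]

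Next I would compute both sides. From the proof of \Cref{prop:charmorphism} we already know that $(\phi_0\circ\gamma)_{xa}^{y'}>0$ iff there exists $y\in X$ with $(x,y)\in a$ and $\phi_0(y)=y'$. For the lower composite, note that $\based\gamma$ is the support endomorphism of $\gamma$ from \eqref{eq:supportendo}, so $(\based\gamma)_{xa}^{x'a'}\neq0$ iff $x'=x$, $a'=a$, and $(x,a)\in\supp(\gamma)$, i.e.\ there is \emph{some} $z\in X$ with $(x,z)\in a$. Composing, $\bigl(\gamma'\circ(\phi_0\otimes\phi_1)\circ\based\gamma\bigr)_{xa}^{y'}>0$ iff $(x,a)\in\supp(\gamma)$ and $(\gamma'\circ(\phi_0\otimes\phi_1))_{xa}^{y'}>0$; by the computation in the proof of \Cref{prop:charmorphism} the latter holds iff $(\phi_0(x),y')\in\phi_1(a)$. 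So the support of the lower composite at $(x,a,y')$ is: $(x,a)\in\supp(\gamma)$ \emph{and} $(\phi_0(x),y')\in\phi_1(a)$.

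With both supports computed, the equivalence becomes transparent. The implication ``$\phi_0\circ\gamma$ supported $\Rightarrow$ lower composite supported'' always holds: if $(x,y)\in a$ with $\phi_0(y)=y'$, then $(x,a)\in\supp(\gamma)$ (witnessed by $y$), and $(\phi_0(x),\phi_0(y))=(\phi_0(x),y')\in\phi_1(a)$ by the morphism property of $(\phi_0,\phi_1)$; so this direction holds for \emph{any} morphism and needs nothing about admissibility. The reverse implication is exactly the content of \Cref{def:admorphism}: assuming $(x,a)\in\supp(\gamma)$ and $(\phi_0(x),y')\in\phi_1(a)$, admissibility hands us a $y\in X$ with $\phi_0(y)=y'$ and $(x,y)\in a$, which says $(\phi_0\circ\gamma)_{xa}^{y'}>0$. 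Conversely, if the diagram $\simeq$-commutes, then choosing any $(x,a)\in\supp(\gamma)$ and any $y'$ with $(\phi_0(x),y')\in\phi_1(a)$ makes the lower composite positive, hence the upper composite positive, yielding the required $y$; this is precisely admissibility. Thus the $\simeq$-commutativity of \eqref{diag:admorphism} is equivalent to admissibility, and both directions of the proposition follow.

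I do not anticipate a serious obstacle here; the only point requiring a little care is the bookkeeping around $\simeq$ versus strict equality --- one must be careful that the claim is about equality of \emph{supports} of the two composites (equivalently, that $R$ sends them to the same relation in $\FinRel$), not strict equality of the nonnegative matrices, since the numerical values on the lower path (sums of products of entries that are all $1$) need not literally match the values $1$ on the upper path. Once that is acknowledged, the argument is the pair of entrywise support computations above, assembled as indicated.
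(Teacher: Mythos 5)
Your proposal is correct and follows essentially the same route as the paper: interpret the $\simeq$-cell as equality of supports, compute entrywise that $(\phi_0\circ\gamma)_{xa}^{y'}>0$ iff some $y$ with $(x,y)\in a$ and $\phi_0(y)=y'$ exists, while the lower composite is supported at $(x,a,y')$ iff $(x,a)\in\supp(\gamma)$ and $(\phi_0(x),y')\in\phi_1(a)$. The paper likewise observes that the $\gtrsim$ direction is automatic for any morphism (via \Cref{prop:charmorphism} and $\gamma\circ\based\gamma=\gamma$) and that the remaining $\lesssim$ direction is verbatim \Cref{def:admorphism}, so no further comment is needed.
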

\begin{proof}
By \Cref{prop:charmorphism} and the fact that $\gamma\circ\based\gamma=\gamma$,
we see that, for any morphism $(\phi_0,\phi_1)$,
\[
\begin{diagram}
X\otimes S
    \ar[r]^-\gamma
    \ar[d]_{\based{\gamma}}
&
X
    \ar[dd]^{\phi_0}
\\
X\otimes S
    \ar[d]_{\phi_0\otimes \phi_1}
    \ar[ru]_{\gamma}
    \gscellrd
    &
    ~
\\
X'\otimes S'
    \ar[r]_-{\gamma'}
&
X'
\end{diagram}
\]
Therefore, it suffices to prove that
the condition in \Cref{def:admorphism}
is equivalent to
\begin{equation}\label{diag:admap}
\begin{diagram}
X\otimes S
    \ar[r]^-\gamma
    \ar[d]_{\based{\gamma}}
    \lscellrdd
&
X
    \ar[dd]^{\phi_0}
\\
X\otimes S
    \ar[d]_{\phi_0\otimes \phi_1}
    &
    ~
\\
X'\otimes S'
    \ar[r]_-{\gamma'}
&
X'
\end{diagram}
\end{equation}
The value
\[
(\gamma'\circ(\phi_0\otimes \phi_1)\circ\based\gamma)_{xa}^{y'}=
\sum_{(w,p)\in X'\otimes S'}
{\gamma'}_{wp}^{y'}(\phi_0)_x^w(\phi_1)_a^p
\]
is nonzero if and only if $(x,a)\in\supp(\gamma)$ and $(\phi_0(x),y')\in \phi_1(a)$.
The value
\[
(\phi_0\circ\gamma)_{xa}^{y'}=
\sum_{y\in X}(\phi_0)_y^{y'}\gamma_{xa}^y
\]
is nonzero if and only if there is $y\in X$ such that $(x,y)\in a$ and $\phi_0(y)=y'$.
The rest is trivial.
\end{proof}

Clearly, for every coherent configuration $(X,S)$, $(\id_X,\id_S)\colon (X,S)\to
(X,S)$ is an admissible morphism. Moreover, admissible morphisms are closed with
respect to composition; before proving this fact, we need a lemma.
\begin{lemma}\label{lemma:paste}
Let $(\phi_0,\phi_1)\colon(X,Y,\gamma)\to(X',Y',\gamma')$ be a morphism of coherent
configurations. Then
\[
\begin{diagram}
X\otimes S
    \ar[r]^-{\based\gamma}
    \ar[dd]_{\based\gamma}
&
X\otimes S
    \ar[d]^{\phi_0\otimes\phi_1}
\\
~
&
X'\otimes S'
    \ar[d]^-{\based{\gamma'}}
\\
X\otimes S
    \ar[r]_{\phi_0\otimes\phi_1}
&
X'\otimes S'
\end{diagram}
\]
commutes.
\end{lemma}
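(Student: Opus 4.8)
The plan is to turn the diagrammatic identity into a comparison of matrix entries, using that $\based\gamma$ and $\based{\gamma'}$ are diagonal $\{0,1\}$-matrices. First I would record the shape of the lower-left leg $(\phi_0\otimes\phi_1)\circ\based\gamma$. Since $\based\gamma\colon X\otimes S\to X\otimes S$ is the diagonal idempotent that is $1$ at $(x,a)$ precisely when $(x,a)\in\supp(\gamma)$, and since $\phi_0,\phi_1$ are (matrices of) functions, a short computation gives that the entry of $(\phi_0\otimes\phi_1)\circ\based\gamma$ at position $\big((x,a),(x',a')\big)$ equals $1$ exactly when $(x,a)\in\supp(\gamma)$, $x'=\phi_0(x)$ and $a'=\phi_1(a)$, and equals $0$ otherwise. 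Post-composing with the diagonal idempotent $\based{\gamma'}$ multiplies the $(x',a')$-row of any matrix by the indicator of $(x',a')\in\supp(\gamma')$; equivalently, $\based{\gamma'}\circ f=f$ holds iff every row index at which $f$ has a nonzero entry lies in $\supp(\gamma')$. Hence the asserted commutativity reduces to the single implication: if $(x,a)\in\supp(\gamma)$, then $\big(\phi_0(x),\phi_1(a)\big)\in\supp(\gamma')$.

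This implication is immediate from the definition of a morphism of coherent configurations: $(x,a)\in\supp(\gamma)$ means $\gamma_{xa}^y=1$ for some $y\in X$, i.e. $(x,y)\in a$; then $(\phi_0(x),\phi_0(y))\in\phi_1(a)$, so $\gamma'_{\phi_0(x)\,\phi_1(a)}^{\phi_0(y)}=1$ and $\big(\phi_0(x),\phi_1(a)\big)\in\supp(\gamma')$. (Alternatively one can invoke \Cref{prop:laxmonoidmorphism}(b): membership $(x,a)\in\supp(\gamma)$ says that the loop color at $x$ equals $s(a)$, hence the loop color at $\phi_0(x)$ equals $\phi_1(s(a))=s(\phi_1(a))$, which is exactly $\big(\phi_0(x),\phi_1(a)\big)\in\supp(\gamma')$.)

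I do not expect a real obstacle here; the whole point is the support-transport property of the preceding paragraph, and the ``hard'' part is only the bookkeeping — confirming that both legs of the square are indeed the $\{0,1\}$-matrix with the row-and-column pattern described above, and in particular that inserting $\based{\gamma'}$ leaves the lower-left leg unchanged. Once the shape of $(\phi_0\otimes\phi_1)\circ\based\gamma$ is pinned down and the row indices occurring in it are identified as the pairs $\big(\phi_0(x),\phi_1(a)\big)$ with $(x,a)\in\supp(\gamma)$, the identity $\based{\gamma'}\circ(\phi_0\otimes\phi_1)\circ\based\gamma=(\phi_0\otimes\phi_1)\circ\based\gamma$ follows at once, completing the proof.
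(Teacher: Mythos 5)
Your proposal is correct and follows essentially the same route as the paper: both arguments reduce the commutativity to the single support-transport implication that $(x,a)\in\supp(\gamma)$ forces $\bigl(\phi_0(x),\phi_1(a)\bigr)\in\supp(\gamma')$, which is exactly the paper's key step (the paper phrases it as one matrix inequality plus the trivial reverse inequality coming from $\based{\gamma'}$ being a diagonal $0,1$-matrix). No gaps; the alternative justification via \Cref{prop:laxmonoidmorphism} is also fine but not needed.
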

\begin{proof}
Both composites in the diagram are $0,1$-matrices. Let us prove that 
\begin{equation}\label{eq:ineq}
(\based{\gamma'}\circ(\phi_0\otimes\phi_1)\circ\based\gamma)_{xa}^{x'a'}\geq
((\phi_0\otimes\phi_1)\circ\based\gamma)_{xa}^{x'a'}.
\end{equation}
We see that $((\phi_0\otimes\phi_1)\circ\based\gamma)_{xa}^{x'a'}=1$ if and
only if 
$(x,a)\in\supp(\gamma)$, $x'=\phi_0(x)$ and $a'=\phi_1(a)$. Since
$(x,a)\in\supp(\gamma)$, there is $y\in X$ such that $(x,y)\in a$ and
since $(\phi_0,\phi_1)$ is a morphism, $(\phi_0(x),\phi_0(y))\in\phi_1(a)$.
Therefore, $(x',a')=(\phi_0(a),\phi_1(a))\in\supp(\gamma')$ and the left-hand
side of \eqref{eq:ineq} is equal to $1$.

The opposite inequality
is trivially true, because $\based{\gamma'}$ is a diagonal matrix with $0,1$ on the
diagonal.
\end{proof}
\begin{proposition}
If
\begin{align*}
(\phi_0,\phi_1)\colon(X,S,\gamma)&\to(X',S',\gamma')\text{ and}\\
(\phi'_0,\phi'_1)\colon(X',S',\gamma')&\to(X'',S'',\gamma'')
\end{align*}
are admissible morphisms of
coherent configurations, then 
\[
(\phi'_0\circ\phi_0,\phi'_1\circ\phi_1)\colon(X,S,\gamma)\to(X'',S'',\gamma'')
\]
is an admissible
morphism.
\end{proposition}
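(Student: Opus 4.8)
The plan is to reduce the statement to the diagrammatic characterisation of admissibility in \Cref{prop:admorphism} and then to paste the two squares \eqref{diag:admorphism}, one for $(\phi_0,\phi_1)$ and one for $(\phi'_0,\phi'_1)$, on top of the commuting square of \Cref{lemma:paste}. First I would record that the composite $(\phi'_0\circ\phi_0,\phi'_1\circ\phi_1)$ is a morphism of coherent configurations at all: if $(x,y)\in a$ then $(\phi_0(x),\phi_0(y))\in\phi_1(a)$ because $(\phi_0,\phi_1)$ is a morphism, and hence $(\phi'_0(\phi_0(x)),\phi'_0(\phi_0(y)))\in\phi'_1(\phi_1(a))$ because $(\phi'_0,\phi'_1)$ is one. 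By \Cref{prop:admorphism} it then remains to prove
\[
\gamma''\circ\bigl((\phi'_0\circ\phi_0)\otimes(\phi'_1\circ\phi_1)\bigr)\circ\based{\gamma}\;\simeq\;(\phi'_0\circ\phi_0)\circ\gamma,
\]
where $f\simeq g$ abbreviates $f\lesssim g$ and $g\lesssim f$ for the preorder $\lesssim$ on the homsets of $\MatR$ from \Cref{sec:2.5}.

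Before the computation I would note that $\simeq$ is a congruence for composition and for $\otimes$; this is immediate from the fact that $R\colon\MatR\to\FinRel$ is a strictly monoidal functor, since $f\simeq g$ forces $R(f)=R(g)$ and therefore $R(h\circ f\circ k)=R(h\circ g\circ k)$ and $R(f\otimes f')=R(g\otimes f')$ for all composable, respectively tensorable, $h,k,f'$. The main computation is then a short chain of rewrites. Using functoriality of $\otimes$ I would write $(\phi'_0\circ\phi_0)\otimes(\phi'_1\circ\phi_1)=(\phi'_0\otimes\phi'_1)\circ(\phi_0\otimes\phi_1)$, and then insert a support endomorphism in the middle via \Cref{lemma:paste} applied to $(\phi_0,\phi_1)$, i.e. replace $(\phi_0\otimes\phi_1)\circ\based{\gamma}$ by $\based{\gamma'}\circ(\phi_0\otimes\phi_1)\circ\based{\gamma}$. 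Now \eqref{diag:admorphism} for the admissible morphism $(\phi'_0,\phi'_1)$ lets me replace $\gamma''\circ(\phi'_0\otimes\phi'_1)\circ\based{\gamma'}$ by $\phi'_0\circ\gamma'$ up to $\simeq$, and finally \eqref{diag:admorphism} for $(\phi_0,\phi_1)$ lets me replace $\gamma'\circ(\phi_0\otimes\phi_1)\circ\based{\gamma}$ by $\phi_0\circ\gamma$ up to $\simeq$; composing these two congruences yields exactly the displayed identity, so the composite is admissible.

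The only genuine obstacle I foresee is the middle step: one must be careful that the support endomorphism is inserted at the level of $X'\otimes S'$ rather than on a single tensor factor, which is precisely the content of \Cref{lemma:paste}, and that the two $\simeq$-replacements may be carried out inside a larger composite, which is the congruence property noted above. Everything else is a formal diagram chase; alternatively the whole argument can be displayed as a single pasting diagram stacking the two squares \eqref{diag:admorphism} above the square of \Cref{lemma:paste}, with the outer horizontal composites giving $(\phi'_0\circ\phi_0)\circ\gamma$ and $\gamma''\circ\bigl((\phi'_0\circ\phi_0)\otimes(\phi'_1\circ\phi_1)\bigr)\circ\based{\gamma}$.
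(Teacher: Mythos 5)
Your proposal is correct and follows essentially the same route as the paper: the paper's proof is exactly the single pasting diagram you describe, stacking the two admissibility squares \eqref{diag:admorphism} for $(\phi_0,\phi_1)$ and $(\phi'_0,\phi'_1)$ with the square of \Cref{lemma:paste}, the outer border being the admissibility square for the composite. Your extra remarks (that the composite is a morphism, and that $\simeq$ is a congruence via the strictly monoidal functor $R$) just make explicit what the paper leaves implicit.
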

\begin{proof}
\[
\begin{diagram}
X\otimes S
    \ar[rr]^-\gamma
    \ar[rd]^-{{\based{\gamma}}}
    \ar[d]_-{\based{\gamma}}
&
~
&
X
    \ar[dd]^{\phi_0}
\\
X\otimes S
    \ar[d]_-{\phi_0\otimes\phi_1}
&
X\otimes S
    \ar[d]^-{\phi_0\otimes\phi_1}
    \simeqcellr
&
~
\\
X'\otimes S'
    \ar[d]_-{\phi'_0\otimes\phi'_1}
    \simeqcellrrd
&
X'\otimes S'
    \ar[l]_-{\based{\gamma'}}
    \ar[r]^-{\gamma'}
&
X'
    \ar[d]^{\phi'_0}
\\
X''\otimes S''
    \ar[rr]_-{\gamma''}
&
~
&
X''
\end{diagram}
\]
The inner
cells in the diagram are just the two admissible morphisms $(\phi_0,\phi_1)$ and $(\phi_0',\phi_1')$ that we are composing, together with the third cell coming from \Cref{lemma:paste}. The outer border
of the diagram 
is just the admissible morphism $(\phi'_0\circ\phi_0,\phi'_1\circ\phi_1)\colon(X,S)\to(X'',S'')$.
\end{proof}
Suppose that $(\phi_0,\phi_1)\colon (X,S)\to (X',S')$ is an
admissible morphism. Can we find a matrix $\widehat{\phi_1}$ that
will make the diagram \eqref{diag:admorphism} commute precisely? The
answer to this question is positive, as the following proposition shows. 
\begin{proposition}\label{prop:phionehat}(c.f. \cite[Lemma 3.12, Corollary 3.13]{french2013functors})
Let $(\phi_0,\phi_1)\colon(X,S,\gamma)\to(X',S',\gamma')$ be an admissible morphism of
coherent configurations.
\begin{enumerate}[(a)]
\item For all $z,y\in X$, with $(z,y)\in b\in S$, we have $\phi_0(z)=\phi_0(y)$ if and only if
$\phi_1(b)\in E_{S'}$.
\item For every $a\in S$,
$x\in X$, $y'\in X'$ such that 
$(x,a)\in\supp(\gamma)$ and
$(\phi_0(x),y')\in\phi_1(a)$, the number of
elements in the set
\[
\{z\in X:(x,z)\in a\text{ and }\phi_0(z)=y'\}
\]
does not depend on the choice of $x,y'$, only on $a$ and $(\phi_0,\phi_1)$.
\item Denote the number from (b) by $\valphi{a}$. Let $\widehat{\phi_1}$ be a matrix
given by
\[
(\widehat{\phi_1})_a^b=
\begin{cases}
\valphi{a}&\phi_1(a)=b,\\
0&\text{otherwise}.
\end{cases}
\]
The diagram
\begin{equation}\label{diag:modulemorph}
\begin{diagram}
X\otimes S
    \ar[r]^-\gamma
    \ar[d]_{\based{\gamma}}
&
X
    \ar[dd]^{\phi_0}
\\
X\otimes S
    \ar[d]_{\phi_0\otimes\widehat{\phi_1}}
    &
    ~
\\
X'\otimes S'
    \ar[r]_-{\gamma'}
&
X'
\end{diagram}
\end{equation}
commutes.
\item For every $a\in S$,
\[
\valphi{a}=\frac{\val a}{\val{\phi_1(a)}}.
\]
\end{enumerate}
\end{proposition}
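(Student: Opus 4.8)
The plan is to prove the four parts essentially in the stated order, as each later part depends on the earlier ones. For part (a): if $(z,y)\in b$ and $\phi_0(z)=\phi_0(y)$, then $(\phi_0(z),\phi_0(y))=(\phi_0(z),\phi_0(z))\in\phi_1(b)$, which by (CC2) forces $\phi_1(b)\subseteq\id_{X'}$, i.e.\ $\phi_1(b)\in E_{S'}$. Conversely, if $\phi_1(b)\in E_{S'}$, then since $(\phi_0(z),\phi_0(y))\in\phi_1(b)$ and $\phi_1(b)$ is contained in the diagonal, we get $\phi_0(z)=\phi_0(y)$. This is immediate from the morphism property plus (CC2).

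For part (b), I would use the admissibility condition together with (CC3). Fix $a\in S$ and let $x\in X$, $y'\in X'$ with $(x,a)\in\supp(\gamma)$ and $(\phi_0(x),y')\in\phi_1(a)$. First, by admissibility there is at least one $z\in X$ with $(x,z)\in a$ and $\phi_0(z)=y'$; this already shows the set in question is nonempty. To count it: all such $z$ satisfy $(x,z)\in a$, so they lie among the $\val a$ vertices $z$ with $(x,z)\in a$. I want to partition these $\val a$ vertices according to the value $\phi_0(z)\in X'$, and argue each nonempty fiber has the same size. Take two vertices $z_1,z_2$ with $(x,z_i)\in a$ and $\phi_0(z_1)=\phi_0(z_2)=:y'$. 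Let $b=\langle z_1,z_2\rangle$; by part (a), $\phi_1(b)\in E_{S'}$. The key is to count, for fixed $z_1$, the number of $z$ with $(x,z)\in a$ and $(z_1,z)\in b$ — this is the structure constant $\nabla^{\,\langle x,z_1\rangle^{-1}}_{\ \cdot}$ type quantity, more precisely it is $\nabla_{a^{-1}b}^{a^{-1}}$ or a similar constant depending only on the colors involved (via (CC3)), hence independent of the particular $x,z_1$. Summing these contributions over the finitely many unit-valued colors $b$ that can connect two vertices in the fiber, and using that this does not depend on the base point, gives that $|\{z:(x,z)\in a,\ \phi_0(z)=y'\}|$ depends only on $a$ and $(\phi_0,\phi_1)$, not on $x$ or $y'$. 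I expect this combinatorial bookkeeping — showing the fiber size is constant across choices of $(x,y')$ — to be the main obstacle; the cleanest route is probably to express the fiber size directly as a sum of structure constants and invoke (CC3).

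Part (c) is then a direct matrix computation. The entry $(\gamma'\circ(\phi_0\otimes\widehat{\phi_1})\circ\based\gamma)_{xa}^{y'}$ is nonzero precisely when $(x,a)\in\supp(\gamma)$ and $(\phi_0(x),y')\in\phi_1(a)$, and in that case equals $\valphi a$ by the definition of $\widehat{\phi_1}$ and $\gamma'$. On the other side, $(\phi_0\circ\gamma)_{xa}^{y'}=\sum_{y\in X}(\phi_0)_y^{y'}\gamma_{xa}^y$ counts exactly the vertices $y$ with $(x,y)\in a$ and $\phi_0(y)=y'$, which is $\valphi a$ by part (b) (and is $0$ when $(x,a)\notin\supp(\gamma)$ or $(\phi_0(x),y')\notin\phi_1(a)$, matching the other side). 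Hence the two matrices agree and diagram \eqref{diag:modulemorph} commutes.

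For part (d), fix $a\in S$ and pick any $x\in X$ with $(x,a)\in\supp(\gamma)$. The $\val a$ vertices $z$ with $(x,z)\in a$ map under $\phi_0$ into the set $\{y':(\phi_0(x),y')\in\phi_1(a)\}$, which has exactly $\val{\phi_1(a)}$ elements. By admissibility every such $y'$ is actually hit, and by part (b) each fiber has size exactly $\valphi a$. Therefore $\val a=\val{\phi_1(a)}\cdot\valphi a$, which rearranges to $\valphi a=\val a/\val{\phi_1(a)}$. Note this also re-proves that $\valphi a$ is independent of $x$, since $\val a$ and $\val{\phi_1(a)}$ are; one could alternatively derive (b) from (d), but proving (b) first is cleaner since (d)'s counting argument implicitly uses that the fibers are equinumerous.
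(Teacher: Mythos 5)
Your proposal is correct and takes essentially the same route as the paper's proof: (a) directly from the morphism property and (CC2); (b) by fixing a base point in the fibre (supplied by admissibility) and partitioning the fibre according to the colors sent by $\phi_1$ to unit colors, so that its cardinality is a sum of structure constants depending only on the colors; (c) by the same entrywise matrix computation; and (d) by splitting the $\val a$ vertices into $\val{\phi_1(a)}$ fibres each of size $\valphi a$. The only slip is cosmetic, in the index bookkeeping of (b): with base point $y$ and $(z,y)\in b$ the relevant constant is $\nabla_{ab}^{a}$ rather than $\nabla_{a^{-1}b}^{a^{-1}}$, but as you note any such count is a structure constant depending only on the colors, so the argument stands.
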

\begin{proof}~\par
\begin{enumerate}[(a)]
\item Suppose that \( \phi_0(z) = \phi_0(y) \). Since \( (z,y) \in b \), it follows that  
\( (\phi_0(z), \phi_0(y)) = (\phi_0(z), \phi_0(z)) \in \phi_1(b) \), so \( \phi_1(b) \) is a unit color.  
Conversely, suppose that \( \phi_1(b) \in E_{S'} \). As $(\phi_0(z),\phi_0(y))$ is colored by a
unit color $\phi_1(b)$, we see that $\phi_0(z)=\phi_0(y)$.
\item As $(\phi_0,\phi_1)$ is admissible, there exists at least one 
$y\in X$ such that $\phi_0(y)=y'$ and $(x,y)\in a$. Fix one of such $y$
in what follows, and consider the set 
\[
W=\{z\in X:(x,z)\in a\text{ and }\phi_0(z)=y'=\phi_0(y)\}.
\]
By (a), $z\in W$ if and only if $(z,y)$ is colored by a color $b\in S$ such that 
$\phi_1(b)\in E_{S'}$. Hence
$W$ is a disjoint union of nonempty sets
\[
W=\mathop{\dot\bigcup}\limits_{\phi_1(b)\in E_{S'}}^{} \{z\in X:(x,z)\in a\text{ and }(z,y)\in b\}
\]
and the cardinality of each of those sets if $\nabla_{ab}^a$. Hence the number of
elements in the set $W$ is equal to
\[
\sum_{\phi_1(b)\in E_{S'}}\nabla_{ab}^a
\]
and this clearly depends only on $a$ and $\phi_1$.
\item
The value
\[
(\phi_0\circ\gamma)_{xa}^{y'}=
\sum_{y\in X}(\phi_0)_y^{y'}\gamma_{xa}^y
\]
counts the number of elements in the set
\[
\{y\in X: (x,y)\in a\text{ and }\phi_0(y)=y'\}
\]
which is equal to $\valphi{a}$. On the other hand,
\[
(\gamma\circ(\phi_0\otimes\widehat{\phi_1}))_{xa}^{y'}=
\sum_{(w,p)\in X'\otimes S'}\gamma_{wp}^{y'}(\phi_0)_x^w(\widehat{\phi_1})_a^p
\]
is nonzero if and only if $(\phi_0(x),y')\in\phi_1(a))$, and in this case, it is equal
to $\valphi{a}$.
\item
Whenever $(x,a)\in \supp(\gamma)$, the cardinality of the set
\[
Y=\{y\in X:(x,y)\in a\}
\]
is equal to $\val{a}$. Since $(\phi_0,\phi_1)$ is a morphism, \Cref{prop:charmorphism}
implies that $(\phi_0(x),\phi_1(a))\in\supp(\gamma'))$, so the cardinality of the
set
\[
Y'=\{y'\in X':(\phi_0(x),y')\in\phi_1(a)\}
\]
is equal to $\val{\phi_1(a)}$. For every $y'\in Y'$, the number of
elements in the set
\[
W_{y'}=\{z\in X:(x,z)\in a\text{ and }\phi_0(z)=y'\}
\]
is equal to $\valphi{a}$, and the set $Y$ is a disjoint union of all those
sets:
\[
Y=\dot\bigcup_{y'\in Y'}W_{y'}.
\]
Therefore, $\val{a}=|Y|=|Y'|\cdot \valphi{a}=\val{\phi_1(a)}\cdot \valphi{a}$.
\end{enumerate}
\end{proof}

Since (as we will prove in the next section) admissible morphisms of coherent
configurations generalize admissible
morphisms of association schemes, all the examples from \cite{french2013functors}
apply.

\begin{example}\label{ex:terminalMorph}
It is easy to see that the terminal object of the category $\Conf$ is $(I,I)$, the
coherent configuration with both degree and rank equal to $1$. For every coherent
configuration $(X,S)$, the unique morphism $!\colon (X,S)\to (I,I)$ is admissible.
\end{example}

\begin{example}\label{ex:tensorProduct}
A \emph{tensor product} of coherent configurations $(X,S)$ and $(X',S')$ is the
coherent configuration $(X,S)\otimes(X',S')=(X\otimes X',S\otimes S')$, with
relations in $S\otimes S'$ given by the rule $((x,x'),(y,y'))\in(a,a')\in S$ if and only if
$(x,y)\in a$ and $(x',y')\in a'$. By \cite[Theorem 3.2.9]{chen2024lectures},
the tensor product of coherent configurations is a coherent configuration. Moreover,
the pair of mappings $\pi_0\colon X\otimes X'\to X'$, $\pi_1\colon S\otimes S'\to
S'$ given by the projection $\pi_0(x,x')=x'$, $\pi_1(a,a')=a'$ are admissible
morphisms. Indeed, suppose that $((x,x'),(a,a'))\in\supp(\gamma)$ and  
$(x',y')=(\pi_0(x,x'),y')\in\pi_1(a,a')=a'$. The first assumption implies that
there is some $(w,w')\in X\otimes X'$ such that $((x,x'),(w,w'))\in (a,a')$,
which implies that $(x,w)\in a$. Therefore, $((x,x'),(w,y'))\in (a,a')$ and
$\pi_0(w,y')=y'$.
\end{example}

\section{Strongly admissible morphisms}
Recall the ``lifting'' problem from Definition~\ref{def:admorphism}. If an admissible
morphism admits solutions to a broader class of lifting problems -- namely those obtained
by dropping the assumption $(x,a)\in\supp(\gamma)$ from Definition~\ref{def:admorphism} -- we call it a \emph{strongly admissible morphism}.
The following definition is a straightforward extension of \cite[Definition 3.1]{french2013functors} from association schemes to coherent configurations.
\begin{definition}\label{def:stradmorphism}
A {\em strongly admissible morphism} of coherent configurations $(X,S)\to (X',S')$ is
a morphism $(\phi_0,\phi_1)\colon (X,S)\to(X',S')$ such
that for any $x\in X$, $y'\in X'$, and $a\in S$ such that 
$(\phi_0(x),y')\in \phi_1(a)$,
there exists $y\in X$ with $\phi_0(y)=y'$ and $(x,y)\in a$.
\end{definition}
Clearly, every strongly admissible morphism is admissible and one can easily adjust the
proof of \Cref{prop:admorphism} to prove that a morphism of coherent configurations
$(\phi_0,\phi_1)$ is strongly admissible if and only if the diagram
\begin{equation}\label{diag:strongadmorphism}
\begin{diagram}
X\otimes S
    \ar[r]^{\gamma}
    \ar[d]_{\phi_0\otimes\phi_1}
    \simeqcellrd
&
X
    \ar[d]^{\phi_0}
\\
X'\otimes S'
    \ar[r]_{\gamma'}
&
X'
\end{diagram}
\end{equation}
commutes. One can also see this directly: omitting the assumption
$(x,a)\in\supp(\gamma)$ in \Cref{def:admorphism} corresponds exactly to replacing the $\based{\gamma}$
in the diagram \eqref{diag:admorphism} by $\id_{X\otimes S}$.

\begin{lemma}\label{lemma:charstradmissible}
Let $(\phi_0,\phi_1)\colon(X,S,\gamma)\rightarrow (X',S',\gamma')$ be an admissible morphism. 
The following are equivalent
\begin{enumerate}[(a)]
\item $(\phi_0,\phi_1)$ is strongly admissible.
\item $\phi_1$ is injective on $E_S$.
\item For all $x\in X$, $a\in S$ and $y'\in S'$: if $(\phi_0(x),y')\in\phi_1(a)$,
then $(x,a)\in\supp(\gamma)$.
\item The diagram
\begin{equation}\label{diag:charstrong}
\begin{diagram}
X\otimes S
    \ar[r]^{\based\gamma}
    \ar[dd]_{\phi_0\otimes\phi_1}
&
X\otimes S
    \ar[d]_{\phi_0\otimes\phi_1}
\\
&
X'\otimes S'
    \ar[d]^{\gamma'}
\\
X'\otimes S'
    \ar[r]_{\gamma'}
&
X'
\end{diagram}
\end{equation}
commutes. 
\end{enumerate}
\end{lemma}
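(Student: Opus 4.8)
The plan is to prove the chain of implications $(a)\Rightarrow(c)\Rightarrow(b)\Rightarrow(a)$ and handle $(d)$ separately, since $(d)$ is a diagrammatic restatement very close to $(c)$.

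\textbf{$(a)\Rightarrow(c)$.} This is essentially immediate. Assume $(\phi_0,\phi_1)$ is strongly admissible and suppose $(\phi_0(x),y')\in\phi_1(a)$. By Definition~\ref{def:stradmorphism} applied with this $x$, $a$, $y'$ (no support hypothesis needed), there exists $y\in X$ with $\phi_0(y)=y'$ and $(x,y)\in a$; the existence of such a $y$ witnesses $(x,a)\in\supp(\gamma)$ since $\gamma_{xa}^y=1$.

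\textbf{$(c)\Rightarrow(b)$.} Suppose $\phi_1(p)=\phi_1(q)$ for $p,q\in E_S$; I want $p=q$. Pick any $x\in X$ with $(x,x)\in p$ (possible since $p$ is a unit color). Then $(\phi_0(x),\phi_0(x))\in\phi_1(p)=\phi_1(q)$, so $(\phi_0(x),y')\in\phi_1(q)$ with $y'=\phi_0(x)$. By (c), $(x,q)\in\supp(\gamma)$, i.e.\ there is $y$ with $(x,y)\in q$; but $q$ is a unit color, so $y=x$ and hence $(x,x)\in q$. Since $S$ is a partition of $X\times X$ and $(x,x)$ lies in both $p$ and $q$, we get $p=q$.

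\textbf{$(b)\Rightarrow(a)$.} Here is where the admissibility hypothesis is used. Assume $\phi_1$ is injective on $E_S$ and let $x\in X$, $a\in S$, $y'\in X'$ with $(\phi_0(x),y')\in\phi_1(a)$; I must produce a lift $y$. The only gap relative to Definition~\ref{def:admorphism} is that I do not yet know $(x,a)\in\supp(\gamma)$. Let $p=s(a)\in E_S$ be the source unit color of $a$, so $(x,x)\in p$. Applying Proposition~\ref{prop:laxmonoidmorphism}(b), $\phi_1(p)=\phi_1(s(a))=s(\phi_1(a))$, which is a unit color containing $(\phi_0(x),\phi_0(x))$. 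I now want to contradict the assumption that $x$ is \emph{not} the source of any $a$-colored edge: if $(x,z)\notin a$ for all $z$, consider instead the unit color $p'=\langle x,x\rangle\in E_S$ actually containing $(x,x)$. One checks that $p'\ne p$ would force $\phi_1(p')\ne\phi_1(p)$ by injectivity on $E_S$; but both $\phi_1(p')$ and $\phi_1(p)=s(\phi_1(a))$ are unit colors and both must contain $(\phi_0(x),\phi_0(x))$ (the former because $(x,x)\in p'$ implies $(\phi_0(x),\phi_0(x))\in\phi_1(p')$; the latter as noted), so by (CC2) and the partition property $\phi_1(p')=\phi_1(p)$, forcing $p'=p$. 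Thus $x$ \emph{is} the source of some $a$-edge, i.e.\ $(x,a)\in\supp(\gamma)$, and admissibility of $(\phi_0,\phi_1)$ then supplies the desired $y$.

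\textbf{$(c)\Leftrightarrow(d)$.} The diagram \eqref{diag:charstrong} says $\gamma'\circ(\phi_0\otimes\phi_1)\circ\based\gamma=\gamma'\circ(\phi_0\otimes\phi_1)$. Since $\based\gamma$ is a $0,1$-diagonal matrix and all entries are nonnegative, the left side is always entrywise $\le$ the right side, so the equality fails precisely when some entry $(\gamma'\circ(\phi_0\otimes\phi_1))_{xa}^{y'}$ is nonzero while $(x,a)\notin\supp(\gamma)$; unwinding the matrix product (exactly as in the proof of Proposition~\ref{prop:admorphism}) this nonvanishing is equivalent to $(\phi_0(x),y')\in\phi_1(a)$. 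Hence \eqref{diag:charstrong} commutes iff the implication in (c) holds. The same computation, together with the observation that $\gamma'\circ(\phi_0\otimes\phi_1)=\phi_0\circ\gamma$ holds by Proposition~\ref{prop:charmorphism} (morphism) \emph{and} by Proposition~\ref{prop:admorphism} the admissible version forces $\phi_0\circ\gamma=\gamma'\circ(\phi_0\otimes\phi_1)\circ\based\gamma$, makes the equivalence with \eqref{diag:strongadmorphism} transparent; I would remark this to tie (d) back to the diagram \eqref{diag:strongadmorphism} already mentioned before the lemma.

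The main obstacle is the step $(b)\Rightarrow(a)$: every other implication is a near-tautology or a direct matrix computation, but this one genuinely exploits that $(\phi_0,\phi_1)$ is already admissible and must convert injectivity on $E_S$ into the missing support condition via the unit colors $s(a)$ and $\langle x,x\rangle$ and the partition axiom (CC2). Care is needed to argue cleanly that these two unit colors coincide; I expect the cleanest phrasing is the contrapositive, using Proposition~\ref{prop:laxmonoidmorphism}(b) to identify $\phi_1(s(a))$ as a unit color of $S'$ through $(\phi_0(x),\phi_0(x))$.
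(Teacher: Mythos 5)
Your proof is correct and, at its core, is the same argument as the paper's: the decisive step in your (b)$\Rightarrow$(a) — using \Cref{prop:laxmonoidmorphism}(b) to get $\phi_1(s(a))=s(\phi_1(a))$, then the partition property of $S'$ and injectivity on $E_S$ to conclude $(x,x)\in s(a)$, hence $(x,a)\in\supp(\gamma)$ — is exactly the paper's proof of (b)$\Rightarrow$(c), and your entrywise comparison for (c)$\Leftrightarrow$(d) is the paper's (c)$\Rightarrow$(d); you merely arrange the implications in a different cycle, replacing the paper's diagram-pasting step (d)$\Rightarrow$(a) by a direct appeal to \Cref{def:admorphism}, which is a harmless (arguably more elementary) reorganization. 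Two cosmetic points: the phrase ``Let $p=s(a)$, so $(x,x)\in p$'' asserts up front precisely what must be proved, though your subsequent $p'=p$ argument repairs this; and in the closing aside, $\gamma'\circ(\phi_0\otimes\phi_1)=\phi_0\circ\gamma$ is false as a matrix identity (the right-hand side has entries $\valphi{a}$, possibly $>1$) — \Cref{prop:charmorphism} and \Cref{prop:admorphism} only give the support relations $\lesssim$ and $\simeq$, which is all that diagram \eqref{diag:strongadmorphism} asserts; since that remark is not load-bearing, the proof stands.
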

\begin{proof}
(a)$\implies$(b): Suppose that $p,q\in E_S$ and that $\phi_1(p)=\phi_1(q)$. Let $x\in
X$ be such that $(x,x)\in p$. As $(\phi_0,\phi_1)$ is a morphism,
$(\phi_0(x),\phi_0(x))\in\phi_1(p)=\phi_1(q)$. Putting $y'=\phi_0(x)$ and 
$a=q$ in \Cref{def:stradmorphism} yields the existence of $y\in X$ such that
$\phi_0(y)=\phi_0(x)$ and $(x,y)\in q$. As $q\in E_S$, $x=y$ and thus $(x,x)\in q$,
so $p=q$.

(b)$\implies$(c):  
Let $(x,x)\in p\in E_S$. Then $(\phi_0(x),\phi_0(x))\in\phi_1(p)$.  
If $(\phi_0(x),y')\in\phi_1(a)$, then $s(\phi_1(a))=\phi_1(p)$.  
By \Cref{prop:laxmonoidmorphism}, we have $s(\phi_1(a))=\phi_1(s(a))$, and by the injectivity
of $\phi_1$ on $E_S$, it follows that $s(a)=p$, which means that $(x,x)\in s(a)$.  
Therefore, $(x,a)\in\supp(\gamma)$.

(c)$\implies$(d):
As $\based{\gamma}$ is a $0,1$-diagonal matrix,
\begin{equation}
\gamma'\circ(\phi_0\otimes\phi_1)
\geq
\gamma'\circ(\phi_0\otimes\phi_1)\otimes\based{\gamma}.
\end{equation}
To prove the reverse inequality, note that
for $(x,a)\in X\otimes S$ and $y'\in X'$, 
\begin{equation}
(\gamma'\circ(\phi_0\otimes\phi_1))_{xa}^{y'}=
\begin{cases}\label{eq:gammaprime}
1 & (\phi_0(x),y')\in\phi_1(a)\\
0 & \text {otherwise.}
\end{cases}
\end{equation}
By (c), $(\phi_0(x),y')\in\phi_1(a)$ implies that $(x,a)\in\supp(\gamma)$, which is
equivalent to $(\based\gamma)_{xa}^{xa}=1$ and the rest is trivial.

(d)$\implies$(a):
Pasting of \eqref{diag:charstrong} and \eqref{diag:admorphism} yields the diagram
\eqref{diag:strongadmorphism}.
\end{proof}
\begin{corollary}
Let $(\phi_0,\phi_1,\gamma)\colon(X,S)\to(X',S',\gamma')$ be a strongly admissible morphism
of coherent configurations. Let $\widehat{\phi_1}$ be as in \Cref{prop:phionehat}. Then  the diagram
\[
\begin{diagram}
X\otimes S
    \ar[r]^{\gamma}
    \ar[d]_{\phi_0\otimes\widehat{\phi_1}}
&
X
    \ar[d]^{\phi_0}
\\
X'\otimes S'
    \ar[r]_{\gamma'}
&
X'
\end{diagram}
\]
commutes.
\end{corollary}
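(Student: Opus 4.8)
The plan is to reduce the statement to \Cref{prop:phionehat}(c), which already supplies the commuting square
\[
\phi_0\circ\gamma=\gamma'\circ(\phi_0\otimes\widehat{\phi_1})\circ\based\gamma ,
\]
and then to argue that the support endomorphism $\based\gamma$ on the right may be deleted. In other words, it suffices to establish the single identity
\[
\gamma'\circ(\phi_0\otimes\widehat{\phi_1})\circ\based\gamma=\gamma'\circ(\phi_0\otimes\widehat{\phi_1}),
\]
since pasting this with the square above gives precisely the diagram in the statement. So the real work is to show that right-composition with $\based\gamma$ changes nothing here.

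To do this I would compare matrix entries. As already computed in the proof of \Cref{prop:phionehat}(c), for $(x,a)\in X\otimes S$ and $y'\in X'$ we have
\[
(\gamma'\circ(\phi_0\otimes\widehat{\phi_1}))_{xa}^{y'}=
\begin{cases}
\valphi{a}&(\phi_0(x),y')\in\phi_1(a)\\
0&\text{otherwise.}
\end{cases}
\]
Since $\based\gamma$ is a $0,1$-diagonal matrix with $(\based\gamma)_{xa}^{xa}=1$ exactly when $(x,a)\in\supp(\gamma)$, right-composition with $\based\gamma$ multiplies the $(x,a)$-th column by $(\based\gamma)_{xa}^{xa}$, hence leaves the matrix unchanged provided every nonzero column $(x,a)$ of $\gamma'\circ(\phi_0\otimes\widehat{\phi_1})$ already satisfies $(x,a)\in\supp(\gamma)$. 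By the display, the $(x,a)$-th column is nonzero only if $(\phi_0(x),y')\in\phi_1(a)$ for some $y'\in X'$.

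The remaining point — and the only place where strong admissibility enters — is that $(\phi_0(x),y')\in\phi_1(a)$ forces $(x,a)\in\supp(\gamma)$. This is exactly condition (c) of \Cref{lemma:charstradmissible}, which holds because $(\phi_0,\phi_1)$ is strongly admissible (implication (a)$\implies$(c) of that lemma). Consequently every nonzero column of $\gamma'\circ(\phi_0\otimes\widehat{\phi_1})$ lies in $\supp(\gamma)$, the displayed identity follows, and composing with \Cref{prop:phionehat}(c) completes the proof. I do not expect any genuine obstacle: all the substance is already contained in \Cref{prop:phionehat} and \Cref{lemma:charstradmissible}, and what remains is the routine observation that right-multiplying by $\based\gamma$ annihilates only columns outside $\supp(\gamma)$.
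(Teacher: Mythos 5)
Your proof is correct and follows essentially the same route as the paper, which simply pastes the square of \Cref{prop:phionehat}(c) with the diagram characterizing strong admissibility in \Cref{lemma:charstradmissible}(d), i.e.\ it uses strong admissibility to delete $\based{\gamma}$. In fact your entrywise verification that $\gamma'\circ(\phi_0\otimes\widehat{\phi_1})\circ\based{\gamma}=\gamma'\circ(\phi_0\otimes\widehat{\phi_1})$ makes explicit a small point the paper's one-line pasting glosses over, namely that the diagram \eqref{diag:charstrong} (stated for $\phi_1$) also holds with $\widehat{\phi_1}$, since the two matrices have the same support.
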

\begin{proof}
This follows by pasting of the diagrams \eqref{diag:charstrong} and
\eqref{diag:modulemorph}.
\end{proof}
\begin{corollary}\label{coro:defsCoincide}
Every admissible morphism from an association scheme $(X,S)$ is strongly admissible.
\end{corollary}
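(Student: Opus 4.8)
The plan is to reduce everything to the characterization of strong admissibility proved in \Cref{lemma:charstradmissible}. That lemma states that for an admissible morphism $(\phi_0,\phi_1)\colon (X,S)\to (X',S')$, being strongly admissible is equivalent (by the implication chain (a)$\iff$(b)) to the condition that $\phi_1$ is injective when restricted to the set $E_S$ of unit colors. So the only thing I would need to check is that this injectivity condition holds automatically in the case at hand.

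Here the hypothesis does the work: by \Cref{def:cc}, saying that $(X,S)$ is an association scheme is the same as saying that $E_S$ is a singleton (it contains only $\id_X$). Any function defined on a one-element set is trivially injective, so $\phi_1|_{E_S}$ is injective with nothing to prove. Invoking the equivalence (b)$\implies$(a) of \Cref{lemma:charstradmissible} then gives that $(\phi_0,\phi_1)$ is strongly admissible. There is no genuine obstacle in this argument; the entire substance sits in \Cref{lemma:charstradmissible}, and this corollary merely records that its condition (b) is vacuously satisfied whenever the source coherent configuration is homogeneous.
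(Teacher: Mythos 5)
Your argument is correct and is precisely the paper's own proof: since $(X,S)$ is an association scheme, $E_S$ is a singleton, so condition (b) of \Cref{lemma:charstradmissible} holds trivially and the equivalence with (a) gives strong admissibility. No differences worth noting.
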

\begin{proof}
If $(X,S)$ is an association scheme, then $|E_S|=1$, the rest follows by (b) of
\Cref{lemma:charstradmissible}.
\end{proof}
\begin{example}\label{ex:tau}
Let $(X,S)$ be a coherent configuration. Consider the discrete coherent configuration
on the set of all unit colors, $\mathcal D_{E_S}$ with vertices
$E_S$ and colors given by all singleton relations on $E_S$ (which we identify with
$E_S\times E_S$).
Consider a pair of maps
$(\tau_{X,S})_0\colon X\to E_S$ and 
$(\tau_{X,S})_1\colon S\to E_S\times E_S$, 
where $(\tau_{X,S})_0(x)$ is the loop color on the vertex $x$
and $(\tau_{X,S})_1(a)=(s(a),t(a))$. Then $((\tau_{X,S})_0,(\tau_{X,S})_1)$ is a
strongly admissible morphism.
\end{example}
\begin{example}
Following \Cref{ex:terminalMorph}, consider the admissible morphism 
$!_{(X,S)}$ to the terminal object. By \Cref{lemma:charstradmissible} (b), $!_{(X,S)}$ is strongly
admissible if and only if $(X,S)$ is an association scheme.
\end{example}
\begin{example}
By \Cref{lemma:charstradmissible} (b), the projection morphism
$(\pi_0,\pi_1)\colon(X,S)\otimes(X',S')\rightarrow (X,S)$ in \Cref{ex:tensorProduct}
is strongly admissible if and only if $(X',S')$ is an association scheme.
\end{example}
The identity~\eqref{eq:semHom} from the following lemma was proved for association schemes in~\cite[Lemma 6.3]{french2013functors}. In the case of coherent configurations, one needs to be more careful: equality holds for strongly admissible ones, while for general admissible morphisms, it holds only in a certain lax form. 
\begin{lemma}\label{lemma:semHom}
    Let $(\phi_0,\phi_1)\colon (X,S)\rightarrow (X',S')$ be a strongly admissible morphism of coherent configurations, $a,b\in S$, and $c'\in S'$. 
Then $\widehat{\phi_1}$ is a semigroup morphism $(S,\nabla)\rightarrow
(S',\nabla)$, meaning that for all $a,b\in S$ and $c'\in S'$,
    \begin{equation}\label{eq:semHom}
    \sum_{\phi_1(c)=c'}\nabla_{ab}^c\frac{\val{c}}{\val{\phi_1(c)}}=\nabla^{c'}_{\phi_1(a),\phi_1(b)}\frac{\val{a}}{\val{\phi_1(a)}}\frac{\val{b}}{\val{\phi_1(b)}}.
\end{equation}
In the case of a general admissible morphism, either the equality in~\eqref{eq:semHom} holds or the left-hand side equals zero. 
\end{lemma}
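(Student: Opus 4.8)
The plan is to compute both sides of \eqref{eq:semHom} by a counting argument over suitable configurations of vertices and colored edges, using the interpretation of $\valphi{c} = \val{c}/\val{\phi_1(c)}$ from \Cref{prop:phionehat}(b),(d) as the number of $z$ with $(x,z)\in c$ and $\phi_0(z)=y'$. Fix $a,b\in S$ and $c'\in S'$. Pick a vertex $x\in X$ and an arbitrary $y'\in X'$ with $(\phi_0(x),y')\in c'$; in particular $(\phi_0(x),\phi_1(a)\cdot\,?)$-style triangles over $(\phi_0(x),y')$ in $(X',S')$ are governed by $\nabla^{c'}_{\phi_1(a),\phi_1(b)}$. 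First I would observe that the right-hand side of \eqref{eq:semHom} counts pairs consisting of a vertex $w'\in X'$ realizing a $(\phi_1(a),\phi_1(b))$-triangle over $(\phi_0(x),y')$, together with a choice of $z\in X$ with $(x,z)\in a$, $\phi_0(z)=w'$ (there are $\valphi{a}$ of these), and a choice of $y\in X$ with $(z,y)\in b$, $\phi_0(y)=y'$ (there are $\valphi{b}$ of these, using that $\phi_0(z)=w'$ and $(\phi_0(z),y')\in\phi_1(b)$, valid since $(z,b)\in\supp(\gamma)$).

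The key step is then to match this with the left-hand side. Given such a triple $(z,y)$, set $c=\langle x,y\rangle\in S$; then $(x,z)\in a$ and $(z,y)\in b$ witness $\nabla_{ab}^c>0$, and $(\phi_0(x),\phi_0(y))=(\phi_0(x),y')$ lies in $\phi_1(c)$, so by strong admissibility (via \Cref{lemma:charstradmissible}(c)) together with \Cref{prop:laxmonoidmorphism}(d) one gets $\phi_1(c)=c'$ — here is where strong admissibility is essential, since for a general admissible morphism there is no a priori guarantee that a color $c$ with $(\phi_0(x),y')\in\phi_1(c)$ actually satisfies $(x,c)\in\supp(\gamma)$, and the clean equality $\phi_1(c)=c'$ can fail unless $(x,y)\in c$ with $x$ genuinely in the support. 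Conversely, for each $c\in S$ with $\phi_1(c)=c'$, fixing a $y\in X$ with $(x,y)\in c$ and $\phi_0(y)=y'$ (which exists by admissibility since $(x,c)\in\supp(\gamma)$, once we know $x$ lies in a $c$-edge; this is the subtle point for the general case), the number of $(a,b)$-triangles $(x,z,y)$ over $(x,y)$ with $z$ having $\phi_0(z)=w'$ for the forced $w'$ is $\nabla_{ab}^c$ many choices of $z$ overall, of which exactly those with $\phi_0(z)$ in the right fiber survive — this reorganizes the count as $\sum_{\phi_1(c)=c'}\nabla_{ab}^c\,\valphi{c}$ after summing over the $\valphi{c}$ choices of $y$ and the implicit fiber structure. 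Carrying out this double count carefully, with both sides expressed as the cardinality of the same set of completed squares
\[
\xymatrix{
z \ar[r]^b & y \ar[d] \\
x \ar[u]^a \ar[r]_c & ~
}
\]
projecting appropriately under $\phi_0$, yields the equality.

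The main obstacle I expect is the general-admissible case: one must show that either \eqref{eq:semHom} holds or the left-hand side vanishes. The natural dichotomy is on whether there exists $c\in S$ with $\phi_1(c)=c'$ and $(x,c)\in\supp(\gamma)$ for the relevant $x$ — equivalently, whether the color $c'$ is ``reachable'' from $x$ in the appropriate sense. If no such $c$ occurs among those with $\nabla_{ab}^c>0$, the left-hand side is literally an empty sum, hence zero. If such a $c$ does occur, then for that $x$ one recovers enough of the lifting property to run the argument above and obtain equality. Making this case split precise — identifying exactly when admissibility, as opposed to strong admissibility, fails to force $\phi_1(c)=c'$, and checking that the failure always collapses the left side to $0$ rather than producing a strict inequality in the wrong direction — is the delicate part, and I would handle it by tracing through \Cref{prop:laxmonoidmorphism} and the support condition $(x,a)\in\supp(\gamma)$ in \Cref{def:admorphism} to see that whenever any term on the left is nonzero, the full lifting needed for the matching argument becomes available.
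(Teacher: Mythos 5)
Your counting scheme is essentially the paper's: fix a base vertex $x$ and a target vertex in $X'$, and count pairs (intermediate vertex, endpoint) in two ways, once by first choosing the $\phi_0$-image of the intermediate vertex (giving the right-hand side as $\nabla^{c'}_{\phi_1(a)\phi_1(b)}\valphi{a}\valphi{b}$), once by classifying the endpoint according to the color $c$ of $(x,\text{endpoint})$ (giving the left-hand side). But as written the count does not get off the ground, and the gaps are exactly the parts you leave as a sketch. The factor $\valphi{a}$ is only the correct count when $(x,a)\in\supp(\gamma)$, i.e. $x$ must be chosen of color $s(a)$, which you never impose; the factor $\valphi{b}$ needs $(z,b)\in\supp(\gamma)$, i.e. $t(a)=s(b)$, which you assert without justification and which can simply fail; and a vertex $x$ of color $s(a)$ with $(\phi_0(x),y')\in c'$ for some $y'$ exists only when $s(\phi_1(a))=s(c')$. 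So the double count is valid only under the compatibility conditions $t(a)=s(b)$, $s(\phi_1(a))=s(c')$, $t(\phi_1(b))=t(c')$, and the degenerate cases must be handled separately: when $s(\phi_1(a))\neq s(c')$ or $t(\phi_1(b))\neq t(c')$ the right-hand side vanishes and then so does the left (since $\nabla_{ab}^c>0$ forces $\nabla^{\phi_1(c)}_{\phi_1(a)\phi_1(b)}>0$ by \Cref{prop:laxmonoidmorphism}); when $t(a)\neq s(b)$ the left-hand side vanishes, and to get equality in the strongly admissible case one must also kill the right-hand side, which requires injectivity of $\phi_1$ on $E_S$ (\Cref{lemma:charstradmissible}) because $\valphi{a}\valphi{b}\geq 1$ always. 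This last case is the only place where equality can genuinely fail for a merely admissible morphism, and it is the reason for the weaker ``or the left-hand side is zero'' statement; your proposal never addresses it.

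Relatedly, you misplace where strong admissibility enters. The step you flag as essential --- $\phi_1(c)=c'$ for $c=\langle x,y\rangle$ --- needs no admissibility at all: $(\phi_0(x),\phi_0(y))\in\phi_1(c)$ by the morphism property, this pair equals $(\phi_0(x),y')\in c'$, and $S'$ is a partition of $X'\times X'$, so $\phi_1(c)=c'$. Indeed the paper proves the main-case equality for arbitrary \emph{admissible} morphisms; strong admissibility is used only in the degenerate case $t(a)\neq s(b)$ described above. Your proposed dichotomy for the general admissible case (``is $c'$ reachable from $x$'') is in the right spirit but keyed to the wrong condition and not carried out; the correct and complete case split is on the source/target colors of $a,b$ and of $\phi_1(a),\phi_1(b),c'$, using \Cref{lemma:sourcesTargets}. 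Finally, your organization of the left-hand count (fixing $y$ and then restricting triangles to a ``forced'' fiber) is muddled: the clean version classifies the endpoint by its color $c$, uses admissibility (legitimate because $s(c)=s(a)$ whenever $\nabla_{ab}^c>0$) to get exactly $\valphi{c}$ admissible endpoints, and then counts $\nabla_{ab}^c$ intermediate vertices for each, with no fiber restriction on the intermediate vertex.
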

\begin{proof}
First, note that if $a,b,c$ are such that $\nabla_{ab}^c>0$ then we must have
$\nabla_{\phi_1(a)\phi_1(b)}^{\phi_1(c)}>0$ for any morphism, so that if the right-hand side is zero, then
the left-hand side is zero as well. 

Consider the case $t(a)\not=s(b)$. Then by \Cref{lemma:units}, $\nabla_{ab}^c=0$ for all $c\in S$,
hence the left-hand side is zero. If
$(\phi_0,\phi_1)$ is strongly admissible, $t(\phi_1(a))\not=s(\phi_1(b))$ due to \Cref{lemma:charstradmissible}, and the
right-hand side vanishes as well. Moreover, if $s(\phi_1(a))\not=s(c')$ or
$t(\phi_1(b))\not=t(c')$, then the right-hand side of~\eqref{eq:semHom} vanishes,
so the left-hand side must vanish as well.

For the rest of the proof we assume $t(a)=s(b)$, $s(\phi_1(a))=s(c')$, and
$t(\phi_1(b))=t(c')$. In particular, there is some vertex $x'\in\mathrm{Im}(\phi_0)$
of color $s(c')=s(\phi_1(a))$. We will show that in this case the equality
in~\eqref{eq:semHom} holds also for admissible $(\phi_0,\phi_1)$.  

    Let us fix any $(x',z')\in c'$ and $x\in X$, such that $\phi_0(x)=x'$ and $x$ has color $s(a)$ (we can find such vertices by the aforementioned assumptions). We will count in two ways the number of $(y,z)\in X^2$, satisfying $(x,y)\in a$, $(y,z)\in b$, and $\phi_0(z)=z'$. 
First, there are $\nabla^{c'}_{\phi_1(a),\phi_1(b)}$ ways to choose possible $y'=\phi_0(y)$. Since $(\phi_0,\phi_1)$ is admissible, for any chosen $y$, we have $\valphi{a}$ possibilities to find $y$, and then  $\valphi{b}$ possibilities to find $z$. This gives us the right-hand side.

For the other direction we may proceed as follows. 
We first find $z$ and discuss the color $c$ of $(x,z)$. Depending on color $c$,
we can choose $z$ in $\valphi{c}$ ways, due to $(\phi_0,\phi_1)$ being admissible.
After choosing $z$, we have $\nabla_{ab}^c$ possibilities to find a suitable
$y$. This counting gives us the left-hand side. Note that the left-hand side sum 
can be restricted to those $c$, for which $s(c)$ is the color of $x$, since the
corresponding $\nabla$ entry is zero otherwise.
\end{proof}

\section{Dagger Frobenius structures}

Examples \ref{ex:groupoids} and \ref{ex:hStarAlgebras} show that 
dagger Frobenius structures are important and interesting in the categories
$\FHilb$ and $\Rel$. This leads to a natural question: are the Frobenius
structures that we have associated to coherent configurations \emph{dagger} Frobenius?
It is very easy to see that, in general, the answer to this question is negative. Indeed, the
matrix $\nabla$ consists only of natural numbers, whereas $\Delta$ may contain
non-natural rational numbers, whenever there is a triple of colors
$a,b,c$ such that $\val c$ is not a divisor of $\nabla_{ac^{-1}}^b$. Clearly,
whenever this happens, $\Delta\neq\nabla^\dag$.
However, as we will prove in this section, there is another Frobenius structure
on the set of all colors, that happens to be  dagger Frobenius.
The ``daggerization'' can be understood as a ``normalization'' of
structure constants via valencies,
converting the Frobenius structure to a dagger Frobenius structure. 
As we will demonstrate in the following sections, the dagger variant has more
convenient properties, especially in connection with admissible morphisms.

Let $(X,S)$ be a coherent configuration. For each $a,b,c\in S$ we denote 
\begin{equation}\label{eq:daggerNabla}
    \dnabla{a}{b}{c}=\ddnabla{a}{b}{c}.
\end{equation}

We will prove in this section that $(S,\Dnabla,e)$ is a dagger Frobenius
structure. We will refer to it as \emph{dagger Frobenius structure associated
with $(X,S)$}.

\begin{lemma}\label{lem:rotations}
    Let $(X,S)$ be a coherent configuration and $a,b,c\in S$. Then
    \begin{align}
        \dnabla{a}{b}{c}&=\dnabla{b^{-1}}{a^{-1}}{c^{-1}},\label{eq:dagOp}\\
        \dnabla{a}{b}{c} &= \sqrt{\frac{\val{b^{-1}}}{\val{b}}}\dnabla{b}{c^{-1}}{a^{-1}},\label{eq:dagRotR}\\
        \dnabla{a}{b}{c} &= \sqrt{\frac{\val{a}}{\val{a^{-1}}}}\dnabla{c^{-1}}{a}{b^{-1}}.\label{eq:dagRotL}
    \end{align}
    In the case $(X,S)$ is $\frac{1}{2}$-homogeneous, we have $\dnabla{a}{b}{c}=\dnabla{b}{c^{-1}}{a^{-1}}=\dnabla{c^{-1}}{a^{-1}}{b^{-1}}$.
\end{lemma}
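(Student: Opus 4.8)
The plan is to work from the definition $\dnabla{a}{b}{c}=\ddnabla{a}{b}{c}$ in \eqref{eq:daggerNabla} and to reduce all three identities to two elementary facts about the structure constants, together with \Cref{coro:nakayama}. The first is the \emph{reflection identity} $\nabla_{ab}^c=\nabla_{b^{-1}a^{-1}}^{c^{-1}}$: reversing a walk $x\to z\to y$ is a bijection between the $(a,b)$-triangles over a fixed edge $(x,y)\in c$ and the $(b^{-1},a^{-1})$-triangles over $(y,x)\in c^{-1}$, so in particular one side is positive iff the other is. The second is the \emph{rotation identity} $\val{c}\,\nabla_{ab}^c=\val{a}\,\nabla_{bc^{-1}}^{a^{-1}}$, which I would prove by counting the set $T=\{(x,z,y)\in X^{3}:(x,z)\in a,\ (z,y)\in b,\ (x,y)\in c\}$ in two ways: summing over $c$-edges gives $|T|=|c|\,\nabla_{ab}^c$, while summing over $a$-edges and reading the pair $(z,x)$ as lying in $a^{-1}$ gives $|T|=|a|\,\nabla_{bc^{-1}}^{a^{-1}}$. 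Now $|c|=|s(c)|\val{c}$ and $|a|=|s(a)|\val{a}$ by \eqref{eq:valencies}, and $s(a)=s(c)$ whenever $T\neq\emptyset$ by \Cref{lemma:sourcesTargets}, so the equal factors $|s(a)|=|s(c)|$ cancel; when $T=\emptyset$ both $\nabla_{ab}^c$ and $\nabla_{bc^{-1}}^{a^{-1}}$ vanish (a colour is a nonempty block, so $|a|\neq 0$), and the identity holds unconditionally.

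Granting these, \eqref{eq:dagOp} is obtained by substituting the reflection identity into $\dnabla{a}{b}{c}=\sqrt{\val{c}/(\val{a}\val{b})}\,\nabla_{b^{-1}a^{-1}}^{c^{-1}}$, comparing with the corresponding expression for $\dnabla{b^{-1}}{a^{-1}}{c^{-1}}$, and noting that the ratio of the two square-root prefactors is $\sqrt{\tfrac{\val{c}}{\val{c^{-1}}}\cdot\tfrac{\val{a^{-1}}}{\val{a}}\cdot\tfrac{\val{b^{-1}}}{\val{b}}}$, which equals $1$ by \Cref{coro:nakayama} when the common structure constant is positive (and both sides are $0$ otherwise). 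For \eqref{eq:dagRotR}, the rotation identity gives $\nabla_{ab}^c=\tfrac{\val{a}}{\val{c}}\,\nabla_{bc^{-1}}^{a^{-1}}$, hence $\dnabla{a}{b}{c}=\sqrt{\val{a}/(\val{b}\val{c})}\,\nabla_{bc^{-1}}^{a^{-1}}$; dividing this by $\dnabla{b}{c^{-1}}{a^{-1}}=\sqrt{\val{a^{-1}}/(\val{b}\val{c^{-1}})}\,\nabla_{bc^{-1}}^{a^{-1}}$ leaves the factor $\sqrt{\tfrac{\val{a}\val{c^{-1}}}{\val{a^{-1}}\val{c}}}$, and \Cref{coro:nakayama} turns this into $\sqrt{\val{b^{-1}}/\val{b}}$. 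Identity \eqref{eq:dagRotL} then follows by applying \eqref{eq:dagRotR} twice, $\dnabla{a}{b}{c}=\sqrt{\tfrac{\val{b^{-1}}}{\val{b}}}\,\dnabla{b}{c^{-1}}{a^{-1}}=\sqrt{\tfrac{\val{b^{-1}}}{\val{b}}\cdot\tfrac{\val{c}}{\val{c^{-1}}}}\,\dnabla{c^{-1}}{a}{b^{-1}}$, together with the relation $\tfrac{\val{b^{-1}}}{\val{b}}\cdot\tfrac{\val{c}}{\val{c^{-1}}}=\tfrac{\val{a}}{\val{a^{-1}}}$ from \Cref{coro:nakayama}. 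Finally, if $(X,S)$ is $\tfrac12$-homogeneous then $\val{a}=\val{a^{-1}}$ for every colour (the discussion after \eqref{eq:valencies}), so every prefactor above is $1$ and \eqref{eq:dagRotR}, \eqref{eq:dagRotL} yield $\dnabla{a}{b}{c}=\dnabla{b}{c^{-1}}{a^{-1}}=\dnabla{c^{-1}}{a}{b^{-1}}$.

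The one genuinely delicate step is the rotation identity: a careless double count would appear to leave a factor $|s(a)|/|s(c)|$ and so to require $\tfrac12$-homogeneity, whereas \Cref{lemma:sourcesTargets} supplies $s(a)=s(c)$ exactly in the relevant case $T\neq\emptyset$, so no homogeneity hypothesis is needed for the three general identities; everything else is bookkeeping with square roots. As a sanity check, these are the familiar facts about how the normalized structure tensor of a Frobenius algebra transforms under the symmetries of its index triples up to the Nakayama automorphism, which in the present setting is the diagonal matrix $a\mapsto\val{a}/\val{a^{-1}}$.
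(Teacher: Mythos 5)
Your proposal is correct and follows essentially the same route as the paper: the reflection identity $\nabla_{ab}^c=\nabla_{b^{-1}a^{-1}}^{c^{-1}}$, a triangle double count, and \Cref{coro:nakayama}, with only cosmetic differences (you count the full triangle set and cancel $|s(a)|=|s(c)|$ via \Cref{lemma:sourcesTargets}, where the paper counts triangles over a fixed source vertex; you get \eqref{eq:dagRotL} by applying \eqref{eq:dagRotR} twice plus Nakayama, where the paper substitutes variables and applies \eqref{eq:dagOp}). One remark: the $\tfrac12$-homogeneous conclusion you derive, $\dnabla{a}{b}{c}=\dnabla{b}{c^{-1}}{a^{-1}}=\dnabla{c^{-1}}{a}{b^{-1}}$, is the one that actually follows from \eqref{eq:dagRotR}--\eqref{eq:dagRotL}; the statement's third term $\dnabla{c^{-1}}{a^{-1}}{b^{-1}}$ appears to be a typo, since it equals $\dnabla{a}{c}{b}$ by \eqref{eq:dagOp} and already fails for the thin scheme on $\mathbb{Z}/3\mathbb{Z}$.
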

\begin{proof}
To prove \eqref{eq:dagOp}, observe that 
$\nabla_{ab}^{c}=\nabla_{b^{-1}a^{-1}}^{c^{-1}}$. Note that 
$\dnabla{a}{b}{c}$ is nonzero if and only if $\nabla_{ab}^c$ is nonzero, so we may 
assume that $\nabla_{ab}^c$ is nonzero.
Then, by \Cref{coro:nakayama},
\[
\frac{\val{a}}{\val{a^{-1}}}.
\frac{\val{b}}{\val{b^{-1}}}=
\frac{\val{c}}{\val{c^{-1}}},
\]
which implies
\begin{equation}\label{eq:tripRot}
    \frac{\val{c}}{\val{a}\val{b}}=\frac{\val{c^{-1}}}{\val{a^{-1}}\val{b^{-1}}},
\end{equation}
and this clearly implies~\eqref{eq:dagOp}.
The proof of \eqref{eq:dagRotR} is as follows:
    \begin{align*}
        \dnabla{a}{b}{c}&=\ddnabla{a}{b}{c}=\frac{\val{c}}{\sqrt{\val{a}\val{b}\val{c}}}\nabla_{a b}^c=\frac{\val{a}}{\sqrt{\val{a}\val{b}\val{c}}}\nabla_{c b^{-1}}^a\\
        &=\sqrt{\frac{\val{b^{-1}}}{\val{b}}}\sqrt{\frac{\val{a}}{\val{c}\val{b^{-1}}}}\nabla^{a}_{cb^{-1}}=\sqrt{\frac{\val{b^{-1}}}{\val{b}}}\dnabla{c}{b^{-1}}{a}=\sqrt{\frac{\val{b^{-1}}}{\val{b}}}\dnabla{b}{c^{-1}}{a^{-1}}.
    \end{align*}
The third equality follows by the double counting principle: the terms $\val c\nabla_{ab}^{c}$ and $\val
a\nabla_{cb^{-1}}^a$ are either both zero, or they count the number of triangles
\[
\xymatrix{
~
&
\cdot
    \ar[rd]^b
\\
x
    \ar[rr]_c
    \ar[ur]^a
&
~
&
\cdot
}
\]
for any vertex $x$, for which at least one such a triangle exists.
    
Equation~\eqref{eq:dagRotL} follows from~\eqref{eq:dagRotR} by first
changing the variables $a\mapsto b^{-1}$, $b\mapsto a^{-1}$, $c\mapsto c^{-1}$ and
then applying \eqref{eq:dagOp} to both sides.
\end{proof}
The ``normalization'' of the structure constants by \eqref{eq:daggerNabla} preserves
the monoid laws, as the following lemma shows.
\begin{lemma}\label{lem:daggerMonoid}
For a coherent configuration $(X,S)$, the triple $(S,\Dnabla,e)$ is a monoid in
$\MatR$.
\end{lemma}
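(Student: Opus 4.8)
The plan is to verify directly that $(S,\Dnabla,e)$ satisfies the three monoid axioms in the concise matrix form \eqref{eq:assoc} and \eqref{eq:unitsMatR}, using the fact that $(S,\nabla,e)$ is already a monoid (\Cref{thm:monAction}). Since $\dnabla{a}{b}{c}=\sqrt{\tfrac{\val c}{\val a\val b}}\,\nabla_{ab}^c$, and the unit $e$ is unchanged, the key is to track how the valency prefactors combine.

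First I would handle the unit laws. The unit colors $p\in E_S$ all satisfy $\val p=1$, and whenever $\nabla_{pa}^c>0$ for $p\in E_S$ we have $c=a$ by \Cref{lemma:units}, so the prefactor $\sqrt{\val c/(\val p\val a)}=\sqrt{\val a/\val a}=1$; hence $\Dnabla$ and $\nabla$ agree on all entries $\dnabla{p}{a}{c}$ with $p\in E_S$, and the left unit law for $\Dnabla$ is literally the left unit law for $\nabla$, namely \eqref{eq:leftunit}. The right unit law is symmetric, using (c') of \Cref{lemma:units}.

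The associativity check is the main point. Fix $a,b,c,d\in S$; I want
\[
\sum_{p\in S}\dnabla{p}{c}{d}\,\dnabla{a}{b}{p}=\sum_{q\in S}\dnabla{a}{q}{d}\,\dnabla{b}{c}{q}.
\]
Substituting the definition, the left side is $\sum_p \sqrt{\tfrac{\val d}{\val p\val c}}\sqrt{\tfrac{\val p}{\val a\val b}}\,\nabla_{pc}^d\nabla_{ab}^p = \sqrt{\tfrac{\val d}{\val a\val b\val c}}\sum_p\nabla_{pc}^d\nabla_{ab}^p$, where the $\val p$ factors cancel cleanly; similarly the right side is $\sqrt{\tfrac{\val d}{\val a\val b\val c}}\sum_q\nabla_{aq}^d\nabla_{bc}^q$. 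The two sums are equal by the associativity \eqref{eq:assoc} of $\nabla$, and the common prefactor $\sqrt{\val d/(\val a\val b\val c)}$ is finite and the same on both sides, so the identity follows. (If one is fussy about division by zero, note $\val a,\val b,\val c,\val d\ge 1$ always, so no prefactor is problematic.)

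I expect no serious obstacle here; the only mildly delicate point is the cancellation of the intermediate valency $\val p$ inside the sum — this works precisely because the normalization exponent $\tfrac12$ is chosen so that each internal edge contributes $\sqrt{\val p}$ from one factor and $\sqrt{1/\val p}$ from the other. Worth emphasizing in the write-up is that this cancellation is exactly the reason the exponent $\tfrac12$ is the right one, and that the whole computation reduces to the corresponding statement for $\nabla$ plus bookkeeping of the boundary valencies $\val a,\val b,\val c,\val d$; alternatively, \Cref{lem:rotations} could be invoked to streamline the symmetry between the unit-law arguments, but the direct computation above is shortest.
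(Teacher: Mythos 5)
Your proof is correct and follows essentially the same route as the paper's: both verify associativity by the cancellation of the intermediate valency and extraction of the common prefactor $\sqrt{\val d/(\val a\val b\val c)}$, reducing to the associativity of $\nabla$, and both reduce the unit laws to those of $\nabla$ using $\val p=1$ for unit colors (together with \Cref{lemma:units}). No gaps worth noting.
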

\begin{proof} Recall that by Theorem~\ref{thm:monAction} $(S,\nabla,e)$ is a monoid.
    First, we prove the associativity law. For each quadruple of colors $a,b,c,d\in S$, we have:
    \begin{align*}
        \sum_f\dnabla{a}{b}{f}\dnabla{f}{c}{d}&=
        \sqrt{\frac{\val{d}}{\val{a}\cdot\val{b}\cdot\val{c}}}\sum_{f\in S}\nabla_{ab}^f\nabla_{fc}^d\\
        &=\sqrt{\frac{\val{d}}{\val{a}\cdot\val{b}\cdot\val{c}}}\sum_{f\in S}\nabla_{af}^d\nabla_{bc}^f=\sum_{f\in S}\dnabla{a}{f}{d}\dnabla{b}{c}{f}.
    \end{align*}
    For $a,b\in S$, we verify the unit rule $\Dnabla\circ (e\otimes \mathrm{id})=\lambda$:
    \begin{equation*}
        \sum_{u\in E_s}\dnabla{u}{a}{b}=\sum_{u\in E_s}\sqrt{\frac{\val{b}}{\val{u}\cdot\val{a}}}\nabla_{ua}^b=\sqrt{\frac{\val{b}}{\val{a}}}\sum_{u\in E_S}\nabla_{ua}^b=\sqrt{\frac{\val{b}}{\val{a}}}\delta_a^b=\delta_a^b.
    \end{equation*}
    We can show that $\Dnabla\circ (\mathrm{id}\otimes e)=\rho$ analogously.
\end{proof}

\begin{theorem}\label{thm:daggerFrob}
    Let $(X,S)$ be a coherent configuration. Then $(S;\Dnabla,e)$ is a dagger Frobenius structure in $\MatR$.
\end{theorem}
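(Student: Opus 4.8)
The plan is to obtain the full dagger Frobenius structure on $(S,\Dnabla,e)$ — whose existence as a monoid is \Cref{lem:daggerMonoid} — by \emph{transporting} the ordinary Frobenius structure of Section~3 along a diagonal isomorphism of $\MatR$, so that the comonoid axioms and the Frobenius law never have to be re-checked; only one entrywise identity survives, and that identity is exactly what the square roots in \eqref{eq:daggerNabla} are designed to make true. Concretely, I would put $D\colon S\to S$ to be the diagonal matrix with $D_a^a=\sqrt{\val a}$. Since all valencies are strictly positive, $D$ is an isomorphism in $\MatR$ (a positive diagonal matrix; cf.\ \Cref{coro:chariso}), with $(D^{-1})_a^a=1/\sqrt{\val a}$, and comparing matrix entries with \eqref{eq:daggerNabla} immediately gives $\Dnabla=D\circ\nabla\circ(D^{-1}\otimes D^{-1})$. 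Because every unit color has valency $1$, we also have $D\circ e=e$ and $e^\dagger\circ D^{-1}=e^\dagger$; thus $(S,\Dnabla,e)$ is literally the monoid obtained from the associated monoid $(S,\nabla,e)$ by transport along $D$.

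Next I would transport the \emph{whole} Frobenius structure of Section~3 along $D$ — a routine manipulation in which every defining equation is conjugated by $D$ and $D^{-1}$ and which carries a Frobenius structure to a Frobenius structure. This yields a Frobenius structure on $S$ whose monoid part is $(S,\Dnabla,e)$, whose counit is $e^\dagger\circ D^{-1}=e^\dagger$, and whose comultiplication is $\Delta':=(D\otimes D)\circ\Delta\circ D^{-1}$, where $\Delta$ is the Section~3 comultiplication. Since $e=(e^\dagger)^\dagger$ already, it now suffices to show $\Delta'=\Dnabla^\dagger$: that exhibits $(S,\Dnabla,e)$, together with $\Delta'$ and $e^\dagger$, as a Frobenius structure with $\Dnabla=(\Delta')^\dagger$ and $e=(e^\dagger)^\dagger$, i.e.\ a dagger Frobenius structure.

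For that step I would compute both sides entrywise. With the component formula for $\Delta$ recorded just after \eqref{eq:comulAlt}, namely $\Delta_a^{bc}=\frac{1}{\val c}\nabla_{ac^{-1}}^b$, one gets
\[
(\Delta')_a^{bc}=\sqrt{\frac{\val b\,\val c}{\val a}}\,\Delta_a^{bc}=\sqrt{\frac{\val b}{\val a\,\val c}}\;\nabla_{ac^{-1}}^b,
\qquad
(\Dnabla^\dagger)_a^{bc}=\dnabla{b}{c}{a}=\sqrt{\frac{\val a}{\val b\,\val c}}\;\nabla_{bc}^a .
\]
These two quantities coincide if and only if $\val b\,\nabla_{ac^{-1}}^b=\val a\,\nabla_{bc}^a$, which is exactly the rotation of structure constants proved by the double-counting of triangles in \Cref{lem:rotations} (the triangle count behind \eqref{eq:dagRotR}, after relabeling $(a,b,c)\mapsto(b,c,a)$). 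With that identity in hand, $\Delta'=\Dnabla^\dagger$, and the proof is complete.

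The only step requiring real care is this last entrywise identification $\Delta'=\Dnabla^\dagger$ — equivalently, the Frobenius law for the pair $\Dnabla,\Dnabla^\dagger$ — because it is exactly where the normalization factors $\sqrt{\val c/(\val a\val b)}$ are essential, and it rests on the triangle double-count behind \Cref{lem:rotations}; everything else ($D$ an isomorphism, transport preserving Frobenius structures, $D\circ e=e$) is routine. A fully equivalent route avoids Section~3 altogether: by \Cref{thm:nondeg} it is enough to check that $e^\dagger$ is a non-degenerate form on the monoid $(S,\Dnabla,e)$, and the associated ``Frobenius isomorphism'' has entry $\sqrt{\val a/\val{a^{-1}}}$ in position $(a,a^{-1})$ and $0$ elsewhere, hence is an isomorphism by \Cref{coro:chariso}; one then identifies the comultiplication it produces with $\Dnabla^\dagger$ using the same rotation identities of \Cref{lem:rotations}.
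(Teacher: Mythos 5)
Your proof is correct, but it takes a genuinely different route from the paper. The paper proves the statement head-on: after establishing the monoid laws for $(S,\Dnabla,e)$ in \Cref{lem:daggerMonoid} (so that the comonoid laws for $\Dnabla^\dagger,e^\dagger$ follow by applying the dagger), it verifies the Frobenius identity $(\Dnabla\otimes\id)\circ(\id\otimes\Dnabla^\dagger)=(\id\otimes\Dnabla)\circ(\Dnabla^\dagger\otimes\id)$ directly, entry by entry, by shuffling indices with the rotation formulas \eqref{eq:dagOp}--\eqref{eq:dagRotL} of \Cref{lem:rotations}. You instead conjugate the Section~3 Frobenius structure by the diagonal isomorphism $D$ with entries $\sqrt{\val a}$, observe that the transported monoid is exactly $(S,\Dnabla,e)$ and the transported counit is $e^\dagger$ (both using $\val p=1$ for unit colors), and reduce everything to the single entrywise identification of the transported comultiplication with $\Dnabla^\dagger$, which is the double-counting identity $\val a\,\nabla_{bc}^a=\val b\,\nabla_{ac^{-1}}^b$ hidden behind \eqref{eq:dagRotR}; this is indeed the same combinatorial core as the paper's argument, just invoked once instead of threaded through a summation. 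What your approach buys is conceptual clarity: it explains the square roots in \eqref{eq:daggerNabla} as a change of basis by valencies, shows the dagger structure is isomorphic as a Frobenius structure to the one of Section~3, and makes \Cref{lem:daggerMonoid} and the explicit Frobenius-law check redundant (at the price of relying on the Section~3 construction and the routine but unstated fact that transport along an isomorphism preserves Frobenius structures). The paper's computation is more self-contained, needing only \Cref{lem:rotations} and no formula for the Section~3 comultiplication. Your closing alternative via \Cref{thm:nondeg} and \Cref{coro:chariso} is also sound -- the form matrix for $e^\dagger$ on $(S,\Dnabla,e)$ has the single nonzero entry $\sqrt{\val a/\val{a^{-1}}}$ in position $(a,a^{-1})$ -- but as you note it still requires identifying the induced comultiplication with $\Dnabla^\dagger$, so it does not save the rotation identities.
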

\begin{proof}
    It is enough to verify Frobenius identity $$(\Dnabla\otimes\mathrm{id})\circ(\mathrm{id}\otimes\Dnabla^\dagger)= (\mathrm{id}\otimes \Dnabla)\circ(\Dnabla^\dagger\otimes \mathrm{id}).$$
    For each $a, b, c, d\in S$, using formulas from Lemma~\ref{lem:rotations}, we derive:
    \begin{align*}
        \sum_{f\in S} \dnabla{f}{d}{b}\dnabla{a}{f}{c}&=\sum_{f\in S}\sqrt{\frac{\val{f}}{\val{f^{-1}}}}\dnabla{b^{-1}}{f}{d^{-1}}\sqrt{\frac{\val{f^{-1}}}{\val{f}}}\dnabla{f}{c^{-1}}{a^{-1}}=\sum_{f\in S}\dnabla{b^{-1}}{f}{d^{-1}}\dnabla{f}{c^{-1}}{a^{-1}}\\
        &=\sum_{f\in S}\dnabla{f^{-1}}{b}{d}\dnabla{c}{f^{-1}}{a}=\sum_{f\in S}\dnabla{f}{b}{d}\dnabla{c}{f}{a}.
    \end{align*}
\end{proof}
\begin{corollary}\label{coro:thinIsDaggerSpecial}
For a thin coherent configuration, $(S,\nabla,e)$ is a special dagger Frobenius structure
in $\MatR$.
\end{corollary}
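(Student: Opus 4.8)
The plan is to observe that for a thin coherent configuration the normalization \eqref{eq:daggerNabla} changes nothing, so that the statement follows directly from \Cref{thm:daggerFrob} and \Cref{thm:connections}(a). Since $(X,S)$ is thin we have $\val a=1$ for every $a\in S$, hence
\[
\dnabla{a}{b}{c}=\ddnabla{a}{b}{c}=\nabla_{ab}^c
\]
for all $a,b,c\in S$; that is, $\Dnabla=\nabla$. \Cref{thm:daggerFrob} then says precisely that $(S,\nabla,e)$ is a dagger Frobenius structure in $\MatR$, with comultiplication $\nabla^\dagger$ and counit $e^\dagger$.

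It remains to show that this dagger Frobenius structure is special, and for this I would identify it with the Frobenius structure associated with $(X,S)$ in Section 3. Both are Frobenius structures built on the monoid $(S,\nabla,e)$: the one from Section 3 has non-degenerate form $e^\dagger$ by construction, while the dagger one has counit $e^\dagger$, which is also a non-degenerate form by \Cref{thm:nondeg}. Since \Cref{thm:nondeg} asserts that a Frobenius structure on a fixed monoid is uniquely determined by its non-degenerate form, the two structures coincide; equivalently, one checks directly that when all valencies equal $1$ the formula \eqref{eq:comul} becomes $\Delta_a^{bc}=\nabla_{b^{-1}a}^c=(\nabla^\dagger)_a^{bc}$, using that in a thin configuration $\nabla_{b^{-1}a}^c>0\iff\nabla_{bc}^a>0$ with both values then equal to $1$. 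By \Cref{thm:connections}(a) the Frobenius structure associated with $(X,S)$ is special because $(X,S)$ is thin; hence $(S,\nabla,e)$ is a special dagger Frobenius structure.

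There is no serious obstacle here: the only step that deserves a moment's attention is the identification of the dagger Frobenius structure with the one constructed in Section 3, which is immediate from the uniqueness clause of \Cref{thm:nondeg}.
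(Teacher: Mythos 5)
Your proof is correct and follows essentially the same route as the paper: observe that thinness gives $\Dnabla=\nabla$, so \Cref{thm:daggerFrob} yields the dagger Frobenius structure, and specialness comes from \Cref{thm:connections}(a). The only difference is that you make explicit (via the uniqueness clause of \Cref{thm:nondeg}, or the direct check $\Delta=\nabla^\dagger$ in the thin case) that this dagger Frobenius structure coincides with the one from Section~3, a step the paper's one-line proof leaves implicit; this is a harmless and indeed welcome addition.
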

\begin{proof}
If $(X,S)$ is thin, then $\Dnabla=\nabla$. By \Cref{thm:connections}, the Frobenius structure is special.
\end{proof}
By the main result of \cite{heunen2013relative}, see also \cite[Theorem
5.4.1]{heunen2019categories}, special dagger Frobenius structures in
$\Rel$ are in a one-to-one correspondence with groupoids. In this correspondence, the
underlying set of the Frobenius structure is the set of all morphisms of a groupoid,
the multiplication $\nabla$ is essentially the composition $\circ$ and the unit
relation $e$ selects the identity morphisms.

In \cite{hanaki2015thin}, the authors proved that every thin coherent configuration
$(X,S)$ can be presented as a groupoid with the set of objects $E_S$ and the set of
morphisms $S$. \Cref{coro:thinIsDaggerSpecial} provides an alternative proof of this fact.
\begin{corollary}\label{coro:thinCCIsGroupoid}
If $(X,S)$ is thin, then the monoid $(R(S),R(\nabla),R(e))$ in $\Rel$ is a groupoid.
\end{corollary}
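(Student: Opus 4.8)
The plan is to obtain the statement as a corollary of \Cref{coro:thinIsDaggerSpecial} by transporting the Frobenius structure along the functor $R$ and then invoking \Cref{ex:groupoids}. By \Cref{coro:thinIsDaggerSpecial}, for a thin $(X,S)$ the triple $(S,\nabla,e)$ is a special dagger Frobenius structure in $\MatR$, with comultiplication $\nabla^\dagger$ and counit $e^\dagger$. Recall that $R\colon\MatR\to\FinRel$ is identity on objects, strictly monoidal (hence strictly comonoidal), and preserves the dagger compact structure; in particular it preserves the coherence isomorphisms $\lambda$, $\rho$, $\sigma$. First I would observe that, because the monoid laws, the comonoid laws and the Frobenius law are all equalities between composites built from $\otimes$, the coherence isos, and the morphisms $\nabla,\nabla^\dagger,e,e^\dagger$, applying $R$ turns them into the corresponding equalities for $(R(S),R(\nabla),R(e))$ equipped with comultiplication $R(\nabla)^\dagger$ and counit $R(e)^\dagger$ (here I use that $R$ commutes with $\dagger$). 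Hence $(R(S),R(\nabla),R(e))$ is a dagger Frobenius structure in $\FinRel$.

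Next I would check that this Frobenius structure is special. For this I would reuse the computation in the proof of \Cref{thm:connections}(a): in the thin case $\nabla\circ\nabla^\dagger$ is the diagonal matrix all of whose diagonal entries equal $|X|$, i.e.\ $\nabla\circ\nabla^\dagger=\lambda_S\circ(t\otimes\id_S)\circ\lambda_S^{-1}$ for the strictly positive scalar $t=|X|$. Applying $R$ and using functoriality gives $R(\nabla)\circ R(\nabla)^\dagger=R(\nabla\circ\nabla^\dagger)$, and since $R$ sends every strictly positive scalar of $\MatR$ to the scalar $1$ of $\FinRel$, this composite equals $\id_{R(S)}$. So $(R(S),R(\nabla),R(e))$ is a special dagger Frobenius structure in $\FinRel$, and as $\FinRel$ is a full monoidal subcategory of $\Rel$ closed under $\dagger$, it is one in $\Rel$ as well. \Cref{ex:groupoids} then identifies it with a (necessarily finite) groupoid whose morphisms are the elements of $R(S)=S$, whose composition is $R(\nabla)$, and whose identities are selected by $R(e)$.

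The argument is mostly bookkeeping about a structure-preserving functor, so there is no deep obstacle; the one subtlety is the transfer of specialness. The notion of specialness used in this paper allows an arbitrary scalar $t$, whereas the groupoid correspondence of \Cref{ex:groupoids} (following \cite{heunen2019categories}) requires $t=1$. Thinness is exactly what rescues this: it makes $\nabla\circ\nabla^\dagger$ a \emph{uniform} positive multiple of the identity (the factor $|X|$, independent of the color), which $R$ then collapses to $1$. Once this is in place, \Cref{ex:groupoids} delivers the conclusion. One could alternatively bypass \Cref{ex:groupoids} and verify directly that $R(\nabla)$ is the graph of a partial composition law making $S$ into a groupoid, but the functorial route is shorter.
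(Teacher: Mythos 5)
Your proposal is correct and follows essentially the same route as the paper: it combines \Cref{coro:thinIsDaggerSpecial} with the transport of the special dagger Frobenius structure along the functor $R\colon\MatR\to\FinRel\subseteq\Rel$ and then invokes the groupoid correspondence of \Cref{ex:groupoids}, which is exactly the argument the paper leaves implicit. Your explicit check that $R$ collapses the scalar $|X|$ to $1$ (so that specialness in the strict sense of \cite{heunen2019categories} holds in $\Rel$) is just the bookkeeping the paper glosses over.
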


Another consequence of the existence of dagger Frobenius structure is the following theorem.
\begin{theorem}\label{coro:hstar}
For every $\frac{1}{2}$-homogeneous coherent configuration $(X,S)$,
the algebra $(L(X),L(\Dnabla),L(e))$ in $\FHilb$ is an $H^*$-algebra.
\end{theorem}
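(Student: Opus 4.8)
The plan is to deduce this from \Cref{ex:hStarAlgebras}: it suffices to exhibit a \emph{symmetric} dagger Frobenius structure in $\FHilb$ whose underlying algebra is $(L(S),L(\Dnabla),L(e))$. By \Cref{thm:daggerFrob} we already have the dagger Frobenius structure $(S,\Dnabla,e)$ in $\MatR$, and since $L\colon\MatR\to\FHilb$ is strictly monoidal and preserves the dagger compact structure, $L$ carries it to a dagger Frobenius structure in $\FHilb$ with multiplication $L(\Dnabla)$, comultiplication $L(\Dnabla)^\dagger=L(\Dnabla^\dagger)$, unit $L(e)$ and counit $L(e)^\dagger$; every monoid, comonoid and Frobenius equation is built from $\otimes$, $\circ$ and $\dagger$, all of which $L$ respects. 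So the only point left to settle is that this structure is symmetric, i.e.\ $e^\dagger\circ\Dnabla\circ\sigma_{S,S}=e^\dagger\circ\Dnabla$ (the analogue for $\Dnabla$ of \Cref{thm:connections}(b), which refers to the non-dagger structure); and since $L$ preserves composition, tensor, the symmetry and the dagger, it is enough to check this equality already in $\MatR$.

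First I would evaluate both sides entrywise. For $p\in E_S$ one has $\nabla_{ab}^p>0$ only if $b=a^{-1}$ and $p=s(a)$, in which case $\nabla_{aa^{-1}}^{s(a)}=\val a$ and $\val{s(a)}=1$, so $\dnabla{a}{a^{-1}}{s(a)}=\sqrt{\val a/\val{a^{-1}}}$. Hence $(e^\dagger\circ\Dnabla)_{ab}^{*}=\sum_{p\in E_S}\dnabla{a}{b}{p}$ vanishes unless $b=a^{-1}$, where it equals $\sqrt{\val a/\val{a^{-1}}}$; symmetrically, $(e^\dagger\circ\Dnabla\circ\sigma_{S,S})_{ab}^{*}=\sum_{p\in E_S}\dnabla{b}{a}{p}$ vanishes unless $a=b^{-1}$, where it equals $\sqrt{\val b/\val{b^{-1}}}=\sqrt{\val{a^{-1}}/\val a}$. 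These two matrices coincide precisely when $\val a=\val{a^{-1}}$ for every $a\in S$, which, by the characterization recalled near~\eqref{eq:valencies}, holds exactly when $(X,S)$ is $\frac{1}{2}$-homogeneous. (Alternatively, symmetry can be read off \Cref{lem:rotations}: in the $\frac{1}{2}$-homogeneous case the rotation relations there give the cyclic identity $\dnabla{a}{b}{c}=\dnabla{b}{c^{-1}}{a^{-1}}$, which is the diagrammatic form of symmetry.)

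Feeding the resulting symmetric dagger Frobenius structure into \Cref{ex:hStarAlgebras} then produces a finite dimensional $H^*$-algebra whose Hilbert space is $\mathbb C^{S}$ with its standard inner product, whose multiplication is $L(\Dnabla)$ with unit $L(e)$, and whose anti-linear involution is the one furnished by that correspondence; this is the assertion. I do not expect a genuine obstacle here: the content is carried by \Cref{thm:daggerFrob} and \Cref{ex:hStarAlgebras}, and the only computation is the symmetry check above, where $\frac{1}{2}$-homogeneity enters solely through $\val a=\val{a^{-1}}$. The one thing to keep straight is that the dagger Frobenius structure associated with $(X,S)$ is supported on the set of colors $S$, so the underlying object of the $H^*$-algebra is $L(S)$ (to be read in place of $L(X)$ in the statement).
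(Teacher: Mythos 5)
Your proposal is correct and follows essentially the same route as the paper: verify entrywise that $e^\dagger\circ\Dnabla\circ\sigma_{S,S}=e^\dagger\circ\Dnabla$ reduces to $\val a=\val{a^{-1}}$, i.e.\ to $\frac{1}{2}$-homogeneity, and then invoke the correspondence between symmetric dagger Frobenius structures in $\FHilb$ and $H^*$-algebras (\Cref{ex:hStarAlgebras}). Your side remarks — that $L$ transports the dagger Frobenius structure and that the underlying object is really $L(S)$ — are accurate and consistent with what the paper implicitly uses.
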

\begin{proof}
By \Cref{thm:connections}, $(X,S)$ is $\frac{1}{2}$-homogeneous if and only if
the associated Frobenius structure is symmetric. Let us prove that
the associated \emph{dagger} Frobenius structure is symmetric, as well.
Indeed, similarly as in the proof of \Cref{thm:connections}, we obtain
after some simple algebraic manipulations,
\[
(e^\dag\circ\nabla\circ\sigma_{S,S})_{ab}^*=
\sum_{p\in{E_S}}\Dnabla_{ba}^p=\\
\sum_{p\in{E_S}}\sqrt{\frac{1}{\val b\val a}}\nabla_{ba}^p=
\begin{cases}
\sqrt{\frac{\val b}{\val a}}&b=a^{-1}\\
0&\text{otherwise}
\end{cases}
\]
and
\[
(e^\dag\circ\nabla)_{ab}^*=
\sum_{p\in{E_S}}\Dnabla_{ab}^p=\\
\sum_{p\in{E_S}}\sqrt{\frac{1}{\val a\val b}}\nabla_{ab}^p=
\begin{cases}
\sqrt{\frac{\val a}{\val b}}&b=a^{-1}\\
0&\text{otherwise}
\end{cases}
\]
and we see that the symmetry of the dagger Frobenius structure associated
with $(X,S)$ is equivalent to the $\frac{1}{2}$-homogeneity of $(X,S)$. Therefore,
$L(S)$ is a dagger symmetric Frobenius structure in $\FHilb$ and those 
are exactly $H^*$-algebras by \cite[Theorem 5.32]{heunen2019categories}.
\end{proof}

The $H^*$-algebra in the above theorem is actually obtained from the \emph{adjacency
algebra} of $(X,S)$, see \cite{chen2024lectures}. This is a subalgebra $\mathcal A$ of the algebra $M_n(\mathbb
C)$ of complex $n\times n$-matrices, where $n=|X|$. $\mathcal A$ has a linear basis
$\{M_a,\ a\in S\}$ labeled by the colors, such that  $M_a^*=M_a^{-1}$, $\sum_{p\in E_S}
M_s=I_n$,
and $\nabla_{ab}^c$ are the structural constants of $\mathcal A$ with respect to this
basis:
\[
M_aM_b=\sum_c \nabla_{ab}^c M_c.
\]
Introducing the inner product $(A,B)\mapsto \mathrm{Tr}\,[A^*B]$ in $\mathcal A$ endows
it with an $H^*$-algebra structure (see \cite[Sec. 5.4]{heunen2019categories}). Moreover,
one can show that 
$\{\bar M_a:= |a|^{-1/2}M_a\}$ is an orthonormal basis of $\mathcal A$. The structural
constants with respect to this new basis have the form:
\[
\sqrt{\frac{|c|}{|a||b|}} \nabla_{ab}^c=\sqrt{\frac{|s(c)|\val c}{|s(a)|\val a|s(b)|\val
b}}\nabla_{ab}^c=k \Dnabla_{ab}^c,
\]
where $k=|p|^{-1/2}$ for $p\in E_S$ (remember that $(X,S)$ is assumed to be
$\frac12$-homogeneous, so that all unit colors have the same cardinality). This shows
that, up to multiplication by a constant, the symmetric dagger Frobenius structure in $\FHilb$
$(L(X),L(\Dnabla), L(e))$ coincides with the structure of the $H^*$-algebra $\mathcal A$.

\section{Admissible morphisms and the dagger Frobenius structures}

A strongly admissible morphism induces a mapping of Frobenius structures which
preserves multiplication (\Cref{lemma:semHom}). In this section, we observe that
if we pass to the dagger variant, the comultiplication structure is to some extent
preserved as well. Note that one should not seek for a notion of morphisms which
preserve all the operations of Frobenius structures, since these are necessarily
isomorphisms as proved in~\cite[Lemma 5.11]{heunen2019categories}.

\begin{lemma}\label{lemma:dagSemHom}  
A strongly admissible morphism $(\phi_0,\phi_1)\colon (X,S)\to(X',S')$ 
induces a semigroup morphism $\pmb\phi$ of associated dagger Frobenius structures,
given by the rule
\begin{equation}\label{eq:daggerAdmissibleEntries}
    \pmb\phi_a^{b'}=\begin{cases}
        \sqrt{\valphi{a}} & \text{if\ } \phi_1(a)=b',\\
        0 & \text{otherwise}.
    \end{cases}
\end{equation}
\end{lemma}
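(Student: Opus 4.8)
The plan is to verify directly that the matrix $\pmb\phi$ defined by \eqref{eq:daggerAdmissibleEntries} is a semigroup morphism from $(S,\Dnabla,e)$ to $(S',\Dnabla,e)$, i.e.\ that $\Dnabla\circ(\pmb\phi\otimes\pmb\phi)=\pmb\phi\circ\Dnabla$. Writing this out entry-wise, for $a,b\in S$ and $c'\in S'$ this amounts to
\[
\sum_{c\in S}\dnabla{a}{b}{c}\,\pmb\phi_c^{c'}
=\sum_{a',b'\in S'}\dnabla{a'}{b'}{c'}\,\pmb\phi_a^{a'}\pmb\phi_b^{b'}.
\]
The right-hand side collapses to a single term $\sqrt{\valphi{a}\valphi{b}}\,\dnabla{\phi_1(a)}{\phi_1(b)}{c'}$, while the left-hand side is $\sum_{\phi_1(c)=c'}\sqrt{\valphi{c}}\,\dnabla{a}{b}{c}$. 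So after substituting the definition $\dnabla{x}{y}{z}=\sqrt{\val z/(\val x\val y)}\,\nabla_{xy}^z$ and clearing the common factor $1/\sqrt{\val a\,\val b}$, the identity to be proved is
\[
\sum_{\phi_1(c)=c'}\sqrt{\valphi{c}}\,\sqrt{\val c}\,\nabla_{ab}^c
=\sqrt{\valphi{a}\,\valphi{b}}\,\sqrt{\val{c'}}\;\frac{\val{c'}}{\sqrt{\val{\phi_1(a)}\val{\phi_1(b)}}}\;\frac{\nabla_{\phi_1(a)\phi_1(b)}^{c'}}{\sqrt{\val{c'}}}\cdot(\text{bookkeeping}),
\]
which I will organize more cleanly by using the relation $\valphi{x}=\val x/\val{\phi_1(x)}$ from \Cref{prop:phionehat}(d). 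Substituting $\valphi{c}=\val c/\val{c'}$ on the left and $\valphi{a}=\val a/\val{\phi_1(a)}$, $\valphi{b}=\val b/\val{\phi_1(b)}$ on the right turns the desired equality into exactly \eqref{eq:semHom} of \Cref{lemma:semHom}, multiplied through by an appropriate power of the valencies; more precisely, after the substitutions both sides become $\sqrt{\val{c'}/(\val a\val b)}$ times the two sides of \eqref{eq:semHom}. Hence the statement reduces to \Cref{lemma:semHom}, which is available since $(\phi_0,\phi_1)$ is \emph{strongly} admissible.

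Concretely, the steps are: (i) expand both sides of the semigroup-morphism equation entry-wise and collapse the right-hand side to one term using that $\phi_1$ is a function; (ii) substitute $\dnabla{}{}{}=\sqrt{\val{}/(\val{}\val{})}\nabla$ everywhere and factor out $1/\sqrt{\val a\val b}$ from both sides; (iii) replace every $\valphi{x}$ by $\val x/\val{\phi_1(x)}$ via \Cref{prop:phionehat}(d); (iv) observe that the resulting identity is \eqref{eq:semHom} scaled by $\sqrt{\val{c'}}$, and invoke \Cref{lemma:semHom}. One should also note the trivial compatibility with supports: $\dnabla{a}{b}{c}\neq0$ iff $\nabla_{ab}^c\neq0$ iff $\nabla_{\phi_1(a)\phi_1(b)}^{\phi_1(c)}\neq0$ (the last using that $(\phi_0,\phi_1)$ is a morphism, \Cref{prop:laxmonoidmorphism}(d)), so both sides vanish simultaneously in the degenerate cases and there is no division-by-zero issue.

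The only real subtlety — and the reason strong admissibility rather than mere admissibility is required — is in step (iv): \Cref{lemma:semHom} gives the clean equality \eqref{eq:semHom} only for strongly admissible morphisms, whereas for a general admissible morphism the left-hand side of \eqref{eq:semHom} might vanish while the right-hand side does not, which would break the semigroup-morphism identity. Everything else is routine algebra with square roots of valencies. I do not expect to need the $\frac12$-homogeneous special cases of \Cref{lem:rotations} here; the source/target bookkeeping of \Cref{lemma:sourcesTargets} and \Cref{prop:laxmonoidmorphism} suffices to see that the summation on the left may be restricted to colors $c$ with $s(c)$ the color of the relevant vertex, matching the summation range implicit in \eqref{eq:semHom}.
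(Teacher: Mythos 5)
Your proposal is correct and follows essentially the same route as the paper: the paper's proof simply multiplies \eqref{eq:semHom} by $\sqrt{\val{c'}}/(\sqrt{\val a}\sqrt{\val b})$ and rewrites it, using $\valphi{x}=\val x/\val{\phi_1(x)}$, as the entrywise semigroup-morphism identity $\sum_{\phi_1(c)=c'}\dnabla{a}{b}{c}\sqrt{\valphi{c}}=\dnabla{\phi_1(a)}{\phi_1(b)}{c'}\sqrt{\valphi{a}}\sqrt{\valphi{b}}$, which is exactly your reduction run in the opposite direction, with the same reliance on strong admissibility through \Cref{lemma:semHom}.
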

\begin{proof}
Multiplying~\eqref{eq:semHom} by $\frac{\sqrt{\|c'\|}}{\sqrt{\|a\|}\sqrt{\|b\|}}$ results in
\begin{align*}
    &\sum_{{\phi_1}(c)=c'}\frac{\sqrt{\val{c}}}{\sqrt{\val{a}}\sqrt{\val{b}}}\nabla_{ab}^c\frac{\sqrt{\val{c}}}{\sqrt{\val{{\phi_1}(c)}}}=\\
    =&\frac{\sqrt{\val{c'}}}{\sqrt{\val{{\phi_1}(a)}}\sqrt{\val{{\phi_1}(b)}}}\nabla^{c'}_{{\phi_1}(a){\phi_1}(b)}\frac{\sqrt{\val{a}}}{\sqrt{\val{{\phi_1}(a)}}}\frac{\sqrt{\val{b}}}{\sqrt{\val{{\phi_1}(b)}}}.
\end{align*}
This gives us
\begin{equation}\label{eq:dMonHom}
\sum_{\phi_1(c)=c'}\dnabla{a}{b}{c}\sqrt{\valphi{c}}=\dnabla{\phi_1(a)}{\phi_1(b)}{c'}\sqrt{\valphi{a}}\sqrt{\valphi{b}}.
\end{equation}
\end{proof}

In the remainder of the paper, whenever $(\phi_0,\phi_1)$ is a strongly admissible
morphism of coherent configurations, $\pmb\phi$ will denote the semigroup
morphism of associated dagger Frobenius structures, as in \Cref{lemma:dagSemHom}. 

In the case of association schemes, the morphism $\pmb\phi$ from the previous lemma is even a monoid morphism. This may happen also for coherent configurations, as the next corollary shows.
\begin{corollary}\label{cor:daggerMonHom}
Let $(\phi_0,\phi_1)\colon(X,S)\rightarrow (X',S')$ be a strongly admissible
morphism. Then $\pmb\phi\colon (S,\Dnabla,e)\rightarrow (S',\Dnabla',e')$ is a
morphism of monoids if and only if  $E_{S'}\subseteq \mathrm{Im}(\phi_1)$.
\end{corollary}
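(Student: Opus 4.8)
We already know from Lemma~\ref{lemma:dagSemHom} that $\pmb\phi$ preserves the multiplication $\Dnabla$, so the only thing at stake is whether $\pmb\phi$ preserves the unit, i.e. whether $\pmb\phi\circ e=e'$. Unwinding definitions, $e$ selects the unit colors of $S$ (with coefficient $1$), and $(\pmb\phi\circ e)^{b'}_* = \sum_{p\in E_S}\pmb\phi_p^{b'} = \sum_{p\in E_S,\ \phi_1(p)=b'}\sqrt{\valphi p}$, while $(e')^{b'}_*$ is $1$ if $b'\in E_{S'}$ and $0$ otherwise. So the statement reduces to the purely combinatorial claim: $\sum_{p\in E_S,\ \phi_1(p)=b'}\sqrt{\valphi p} = [\,b'\in E_{S'}\,]$ for every $b'\in S'$, and the corollary asserts this holds exactly when $E_{S'}\subseteq\mathrm{Im}(\phi_1)$.

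First I would compute $\valphi p$ for a unit color $p\in E_S$. By Proposition~\ref{prop:phionehat}(d), $\valphi p = \val p/\val{\phi_1(p)}$; since $p$ and (by Proposition~\ref{prop:laxmonoidmorphism}(a)) $\phi_1(p)$ are both unit colors, both valencies equal $1$, so $\valphi p = 1$ and hence $\sqrt{\valphi p}=1$. Therefore the sum above is simply the number of $p\in E_S$ with $\phi_1(p)=b'$. Next I would observe that, since $(\phi_0,\phi_1)$ is strongly admissible, Lemma~\ref{lemma:charstradmissible}(b) tells us $\phi_1$ is injective on $E_S$; combined with $\phi_1(E_S)\subseteq E_{S'}$ this means: for $b'\in E_{S'}$ the fibre $\{p\in E_S:\phi_1(p)=b'\}$ has cardinality $1$ if $b'\in\mathrm{Im}(\phi_1)$ and $0$ otherwise, while for $b'\notin E_{S'}$ the fibre is empty. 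So $(\pmb\phi\circ e)^{b'}_* = [\,b'\in E_{S'}\cap\mathrm{Im}(\phi_1)\,]$.

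Comparing with $(e')^{b'}_* = [\,b'\in E_{S'}\,]$, the two agree for all $b'$ precisely when $E_{S'}\cap\mathrm{Im}(\phi_1) = E_{S'}$, i.e. when $E_{S'}\subseteq\mathrm{Im}(\phi_1)$. This gives both directions at once: if $E_{S'}\subseteq\mathrm{Im}(\phi_1)$ then $\pmb\phi\circ e = e'$ and (with Lemma~\ref{lemma:dagSemHom}) $\pmb\phi$ is a monoid morphism; conversely if some $b'\in E_{S'}$ is not in $\mathrm{Im}(\phi_1)$, then $(\pmb\phi\circ e)^{b'}_* = 0 \neq 1 = (e')^{b'}_*$, so the unit is not preserved. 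There is no real obstacle here — the only point requiring care is the clean use of strong admissibility via Lemma~\ref{lemma:charstradmissible}(b) to get injectivity of $\phi_1$ on $E_S$ (for a merely admissible morphism the fibre over a unit color could have several elements and the square roots would generally fail to sum to $1$), together with the observation that $\valphi p = 1$ on unit colors, which kills the only potentially troublesome factor.
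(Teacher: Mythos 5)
Your proposal is correct and follows essentially the same route as the paper: reduce to unit preservation via \Cref{lemma:dagSemHom}, note $\valphi{p}=1$ for $p\in E_S$, and use injectivity of $\phi_1$ on $E_S$ (from \Cref{lemma:charstradmissible}) to compare $\pmb\phi\circ e$ with $e'$ entrywise, yielding both directions. The only implicit micro-step (shared with the paper's own proof) is that $b'\in E_{S'}\cap\mathrm{Im}(\phi_1)$ is hit by a \emph{unit} color of $S$, which follows at once from $\phi_1(s(a))=s(\phi_1(a))$ in \Cref{prop:laxmonoidmorphism}.
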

\begin{proof}
By Lemma~\ref{lemma:dagSemHom}, we already know that $\pmb\phi$ preserves multiplication. Therefore, it is enough to focus on the preservation of the unit. 
First, assume $\pmb\phi$ is a monoid morphism. For each $u'\in E_{S'}$, we have
$1={(e')}^{u'}_*=(\pmb\phi\circ e)^{u'}_*$. This could happen only if $u'$ is in the image of $\phi_1$.

    Next, consider the other implication. By the assumptions, the mapping $\phi_1$
    restricts to a bijection between $E_S$ and $E_{S'}$. Moreover, for each $u\in
    E_S$, $\valphi{u}=1$. It follows that \[
    (\pmb\phi\circ e)_*^{b'}=\sum_{u\in
    E_S}\pmb\phi^{b'}_u=\sum_{u\in E_S}\delta^{b'}_{\phi_1(u)}=\sum_{u'\in
    E_{S'}}\delta^{b'}_{u'}={(e')}_*^{b'}.\]
\end{proof}

Let $(\phi_0,\phi_1)\colon(X,S)\to(X',S')$ be a strongly admissible morphism of
coherent configurations. For every $p'\in E_{S'}$ and $(x',x')\in p'$,
the number $|\phi_0^{-1}(x')|$ does not depend on the choice of $x'$. Indeed,
it is easy to see that
\begin{equation}\label{eq:kp}
|\phi_0^{-1}(x')|=\sum_{\substack{a\in S\\ \phi_1(a)=p'}}\valphi{a}.
\end{equation}
In what follows, we denote the quantity in \eqref{eq:kp} by $k_{p'}$.
\begin{definition}\label{def:ddager}
    For a strongly admissible morphism $(\phi_0,\phi_1)\colon(X,S)\to(X',S')$ 
    the matrix
    $\phi^\ddagger\colon S'\to S$ is given by the rule
    \begin{equation}\label{eq:daggerHom}
    (\phi^\ddagger)_{b'}^a=
        \frac{1}{k_{t(b')}}\pmb\phi_a^{b'},\qquad a\in S,\ b'\in S'.
\end{equation}
\end{definition}

It immediately follows from~\eqref{eq:daggerHom} that we have the following equations
\begin{equation}\label{eq:diagonalK}
    \pmb\phi^\dagger=\phi^\ddagger\circ k_{\phi} \quad\text{and}\quad\phi^\ddagger=\pmb\phi^\dagger\circ k^{-1}_{\phi}
\end{equation}
where $k_\phi\colon S'\to S'$ is a diagonal matrix having $k_{t(b')}$'s on the diagonal. 

Note that if $(\phi_0,\phi_1)$ is an admissible morphism between schemes, then $k_\phi$ is a constant diagonal matrix, hence
$(\phi^\ddagger)_{b'}^a=\frac{1}{k}\cdot\pmb\phi^{b'}_a$ for some integer $k\geq 1$.
\begin{proposition}\label{prop:semigroupHom}
Let $(\phi_0,\phi_1)\colon (X,S)\rightarrow (X',S')$ be a strongly admissible
morphism of coherent configurations, $a',b'\in S'$, and $c\in S$.
If $\phi_1$ is surjective, we have
\begin{equation}\label{eq:cosemHom}
    \sum_{p,q\in S}\dnabla{p}{q}{c}\cdot(\phi^\ddagger)^p_{a'}\cdot
(\phi^\ddagger)_{b'}^q= (\phi^\ddagger)_{\phi_1(c)}^{c}\cdot\dnabla{a'}{b'}{\phi_1(c)}.
\end{equation}
In this case $\phi^\ddagger\colon (S',\Dnabla)\rightarrow (S,\Dnabla)$ is a semigroup morphism.
    
    If $\phi_1$ is not surjective, equality in~\eqref{eq:cosemHom} holds whenever the left-hand side is non-zero.
\end{proposition}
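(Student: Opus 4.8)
The plan is to reduce the claimed identity~\eqref{eq:cosemHom} to a double-counting argument that mirrors the proof of \Cref{lemma:semHom}, but carried out in the ``co'' direction: instead of composing two colors upstairs and pushing down, we take a color $c$ upstairs and count the ways it can be split against a composition $(a',b')$ of colors downstairs. First I would rewrite both sides in terms of the ordinary structure constants using the definitions~\eqref{eq:daggerNabla}, \eqref{eq:daggerAdmissibleEntries}, and \eqref{eq:daggerHom}, so that $\dnabla{p}{q}{c}=\ddnabla{p}{q}{c}$, $(\phi^\ddagger)^p_{a'}=\frac{1}{k_{t(a')}}\sqrt{\valphi{p}}\,\delta^{a'}_{\phi_1(p)}$, and likewise for the other factors. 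Collecting the square roots of valencies and of the constants $k_{t(b')}$, the target identity becomes a plain statement about integers: essentially
\[
\sum_{\substack{p,q\in S\\ \phi_1(p)=a',\ \phi_1(q)=b'}}\nabla_{pq}^c\,\valphi p\,\valphi q
\;=\;
\nabla_{a'b'}^{\phi_1(c)}\,\valphi c\cdot(\text{a correction factor}),
\]
and the main analytic work is to check that all the $\val{-}$ and $k$ factors assemble into exactly the constant predicted by the claimed formula. For this I would lean on \Cref{prop:phionehat}(d), $\valphi a=\val a/\val{\phi_1(a)}$, on the identity~\eqref{eq:tripRot} coming from \Cref{coro:nakayama}, and on $\val{\phi_1(c)}\mid\val c$; the bookkeeping is routine but needs care, which is why I would do it symbolically before the combinatorics.

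For the combinatorial heart, fix colors $a',b'\in S'$, $c\in S$, and pick representatives as in \Cref{lemma:semHom}: fix $(x',z')\in\phi_1(c)$ (assuming $\nabla^{\phi_1(c)}_{a'b'}>0$, else both sides vanish since a morphism sends nonzero structure constants to nonzero ones, and strong admissibility forces the left side to vanish too by \Cref{lemma:charstradmissible}), and fix $x\in X$ with $\phi_0(x)=x'$ and $x$ of color $s(c)$; since $\phi_1$ is surjective such $x$ exists. Then I would count in two ways the number of triples $(y',y,z)$ with $(x',y')\in a'$, $(y',z')\in b'$, $(x,z)$ lifting $(x',z')$ along the $\phi_1(c)$-fibre (i.e. $\phi_0(z)=z'$ and $(x,z)\in c$), and $y$ a vertex with $(x,y)\in p$, $(y,z)\in q$ for colors $p,q$ lifting $a',b'$. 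Counting ``first choose $z$, then $y$'': by admissibility there are $\valphi c$ choices of $z$, and for each, $\sum_{\phi_1(p)=a',\phi_1(q)=b'}\nabla_{pq}^{c}$ choices of $y$ (splitting $(x,z)\in c$ into an $(p,q)$-walk whose midpoint lies over some vertex), and for each such $y$ the constraints $(x,y)\in p$, $(y,z)\in q$ are automatically satisfied --- but I must weight correctly, and here the factors $\valphi p$, $\valphi q$ enter because I really want to count $(y',y,z)$, not $(p,q,y,z)$, so $y'=\phi_0(y)$ is determined and each $y'$ has $\valphi p$ preimages. Counting the other way ``first choose $y'$, then $y$, then $z$'': there are $\nabla^{\phi_1(c)}_{a'b'}$ choices of $y'$, then $\valphi p$ choices of $y$ over $y'$ with $(x,y)\in p$ where $p$ is forced to be a lift of $a'$ with $\nabla_{p?}^c>0$ --- this is the subtle point, since $p$ need not be unique --- and then $\valphi q$ choices of $z$. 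Equating the two counts and dividing through by the common positive factors yields~\eqref{eq:cosemHom}.

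The main obstacle I anticipate is exactly the asymmetry between the two colors: on the downstairs side the pair $(a',b')$ is fixed, but a single $(a,b)$-triangle downstairs may lift to triangles over several distinct pairs $(p,q)$ of colors upstairs, so the ``first choose $y'$'' count must be organized by first summing over which lift $q$ of $b'$ is the color of $(y,z)$ (equivalently $(x,z)$ acquires color $c$ only for compatible choices), using $t(p)=s(q)$ from \Cref{lemma:sourcesTargets} and the source/target compatibility transported by \Cref{prop:laxmonoidmorphism} to restrict the sum. Strong admissibility, via \Cref{lemma:charstradmissible}(c), is what guarantees that whenever $(\phi_0(x),y')\in\phi_1(p)$ we actually have $(x,p)\in\supp(\gamma)$, so that the lifts $y$ genuinely exist and the fibre count $\valphi p$ is the right one uniformly; without surjectivity of $\phi_1$ the fixed representative $x$ of color $s(c)$ over $x'$ might not exist, which is precisely why the non-surjective case only gives the one-sided ``left side zero or equality'' statement --- there I would argue that if the right-hand side of~\eqref{eq:cosemHom} is zero while some summand on the left is nonzero, one derives a contradiction with \Cref{prop:laxmonoidmorphism}(d) applied to that summand, exactly as in the last paragraph of the proof of \Cref{lemma:semHom}. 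Once the factor-juggling of the first paragraph is pinned down, the semigroup-morphism assertion $\phi^\ddagger(\Dnabla'(a'\otimes b'))=\Dnabla(\phi^\ddagger a'\otimes\phi^\ddagger b')$ is just the coordinate statement~\eqref{eq:cosemHom} read off entrywise, so no extra work is needed there.
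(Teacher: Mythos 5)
Your overall plan --- reducing \eqref{eq:cosemHom} to a double-counting statement about lifts of $a',b'$ against a fixed color $c$ upstairs --- is the right shape and close in spirit to the paper's argument, but both places where you defer the work are exactly where the sketch goes wrong. First, the algebraic reduction. Substituting \eqref{eq:daggerNabla} and \eqref{eq:daggerHom}, the contribution of a pair $(p,q)$ with $\phi_1(p)=a'$, $\phi_1(q)=b'$ to the left-hand side is
\[
\sqrt{\tfrac{\val c}{\val p\val q}}\,\nabla_{pq}^c\cdot\sqrt{\tfrac{\val p}{\val{a'}}}\sqrt{\tfrac{\val q}{\val{b'}}}\cdot\tfrac{1}{k_{t(a')}k_{t(b')}}
=\sqrt{\tfrac{\val c}{\val{a'}\val{b'}}}\,\tfrac{1}{k_{t(a')}k_{t(b')}}\,\nabla_{pq}^c,
\]
so the valencies of $p$ and $q$ cancel completely and the identity you actually need is the \emph{unweighted} one, $\sum_{\phi_1(p)=a',\,\phi_1(q)=b'}\nabla_{pq}^c=k_{t(a')}\,\nabla^{\phi_1(c)}_{a'b'}$ (this is \eqref{eq:help1} in the paper), not your weighted sum $\sum\nabla_{pq}^c\,\valphi p\,\valphi q$. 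Missing this cancellation is not routine bookkeeping: it is precisely why the dagger normalization behaves well here, and it dissolves the ``asymmetry'' you single out as the main obstacle, since no weights have to be distributed over the several pairs $(p,q)$ lifting $(a',b')$.

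Second, the combinatorial count you describe does not close. Fix any $(x,y)\in c$ (no surjectivity is needed for this choice; surjectivity only enters to guarantee $a',b'\in\mathrm{Im}(\phi_1)$ so the left-hand side is not an empty sum). The left-hand side of the unweighted identity counts the midpoints $z\in X$ with $(\phi_0(x),\phi_0(z))\in a'$ and $(\phi_0(z),\phi_0(y))\in b'$, each such $z$ determining its colors $p,q$ uniquely. In your second count (``first choose $y'$, then $y$, then $z$'') the number of $y$ lying over a chosen $y'$ is $k_{t(a')}=|\phi_0^{-1}(y')|$ --- every preimage qualifies, its color with $x$ automatically being \emph{some} lift of $a'$ --- not ``$\valphi p$ choices for a forced $p$''; and your ``$\valphi q$ choices of $z$'' ignores the constraint that $(x,z)$ must have color exactly $c$ rather than another lift of $\phi_1(c)$, a constraint your first count does impose. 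So as written the two counts enumerate different sets, and equating them is unjustified. The repair is the paper's observation: over a fixed $c$-edge there are $\nabla^{\phi_1(c)}_{a'b'}$ admissible downstairs midpoints $z'$, and by strong admissibility together with $a'\in\mathrm{Im}(\phi_1)$ each such $z'$ lies in $\mathrm{Im}(\phi_0)$ and has exactly $k_{t(a')}$ preimages, \emph{all} of which are valid upstairs midpoints; this gives \eqref{eq:help1} at once. Your treatment of the vanishing cases and of the non-surjective statement via \Cref{prop:laxmonoidmorphism}(d) and \Cref{lemma:charstradmissible} is fine in outline, but the heart of the proof as proposed has a genuine gap.
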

\begin{proof}
    First, if  $a'$ or $b'$ is not in the image of $\phi_1$, the left-hand side is zero, while the right-hand side may be 
    non-zero. Further, if $\Dnabla_{a'b'}^{\phi_1(c)}=0$, then both sides vanish. For the
rest of the proof we assume that $\Dnabla_{a'b'}^{\phi_1(c)}\neq0$.

    We will prove by double-counting the following equation
    \begin{equation}\label{eq:help1}
\sum_{\substack{\phi_1(p)=a'\\\phi_1(q)=b'}}\nabla^c_{p q}=k_{t(a')}\cdot \nabla^{\phi_1(c)}_{a' b'}.
    \end{equation}
    For a fixed edge $(x,y)\in c$, the left-hand side equals the number of $z\in X$,
such that $(\phi_0(x),\phi_0(z))\in a'$ and $(\phi_0(z),\phi_0(y))\in b'$. Let us
count the same number in a different way. First, there are
$\nabla^{\phi_1(c)}_{a'b'}$ choices of an element $z'\in X'$ such that
$(\phi_0(x),z')\in a'$ and $(z',\phi_0(y))\in b'$.  Due to strong admissibility and the assumption
$a'\in\mathrm{Im}(\phi_1)$, each chosen $z'$ belongs to $\mathrm{Im}(\phi_0)$ and
there are $k_{t(a')}$ elements in $\phi_0^{-1}(z')$. This yields the right-hand side.

    Let us finally prove the equation~\eqref{eq:cosemHom}:
    \begin{multline*}
        \sum_{p,q\in S}\dnabla{p}{q}{c}\cdot(\phi^\ddagger)^p_{a'}\cdot
(\phi^\ddagger)_{b'}^q=\sum_{\substack{\phi_1(p)=a'\\ \phi_1(q)=b'}}\ddnabla{p}{q}{c}\sqrt{\frac{\val{p}}{\val{a'}}}\sqrt{\frac{\val{q}}{\val{b'}}}\frac{1}{k_{t(a')}k_{t(b')}}\\
        =\sqrt{\frac{\val{c}}{\val{a'}\val{b'}}}
\frac{1}{k_{t(a')}k_{t(b')}}\sum_{\substack{\phi_1(p)=a' \\ \phi_1(q)=b'}}\nabla_{pq}^{c}\\
        \overset{\eqref{eq:help1}}{=}\sqrt{\frac{\val{c}}{\val{a'}\val{b'}}}\nabla^{\phi_1(c)}_{a'b'}\frac{1}{k_{t(b')}}
        =\sqrt{\frac{\val{\phi_1(c)}}{\val{a'}\val{b'}}}\nabla^{\phi_1(c)}_{a'b'}\sqrt{\frac{\val{c}}{\val{\phi_1(c)}}}\frac{1}{k_{t(b')}}=(\phi^\ddagger)_{\phi_1(c)}^{c}\cdot\dnabla{a'}{b'}{\phi_1(c)}.
    \end{multline*}
    Note that in the last step we use that $t(b')=t(\phi_1(c))$, which follows from
the assumption $\dnabla{a'}{b'}{\phi_1(c)}>0$ and \Cref{lemma:sourcesTargets}.
\end{proof}
\begin{corollary}
    Let $(\phi_0,\phi_1)\colon (X,S)\rightarrow (X',S')$ be a strongly
admissible morphism with surjective $\phi_1$. Then $k_\phi^{-1}\circ\pmb\phi\colon
(S,\Dnabla^\dagger)\rightarrow(S',\Dnabla^\dagger)$ is a morphism of cosemigroups.
\end{corollary}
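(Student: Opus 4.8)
\section*{Proof proposal}

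The plan is to obtain the statement by simply applying the dagger functor to the semigroup morphism property of $\phi^\ddagger$ already established in \Cref{prop:semigroupHom}. Recall first what has to be shown: writing $\Dnabla^\dagger$ for the comultiplications of the dagger Frobenius structures on $S$ and $S'$, a morphism $g\colon S\to S'$ is a cosemigroup morphism exactly when $\Dnabla^\dagger\circ g=(g\otimes g)\circ\Dnabla^\dagger$.

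First I would invoke \Cref{prop:semigroupHom}: since $\phi_1$ is surjective, $\phi^\ddagger\colon(S',\Dnabla)\to(S,\Dnabla)$ is a genuine semigroup morphism, that is $\Dnabla\circ(\phi^\ddagger\otimes\phi^\ddagger)=\phi^\ddagger\circ\Dnabla'$ holds as an equality of morphisms (this is where surjectivity of $\phi_1$ matters — it removes the ``whenever the left-hand side is non-zero'' caveat of \eqref{eq:cosemHom}). Applying $\dagger$ to both sides and using that $\dagger$ is a contravariant strict monoidal functor on $\MatR$, so that $(h\circ k)^\dagger=k^\dagger\circ h^\dagger$ and $(h\otimes k)^\dagger=h^\dagger\otimes k^\dagger$, I get
\[
\bigl((\phi^\ddagger)^\dagger\otimes(\phi^\ddagger)^\dagger\bigr)\circ\Dnabla^\dagger=\Dnabla'^\dagger\circ(\phi^\ddagger)^\dagger,
\]
which says precisely that $(\phi^\ddagger)^\dagger\colon(S,\Dnabla^\dagger)\to(S',\Dnabla'^\dagger)$ is a cosemigroup morphism.

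It then remains to identify $(\phi^\ddagger)^\dagger$ with $k_\phi^{-1}\circ\pmb\phi$. From the second identity in \eqref{eq:diagonalK} we have $\phi^\ddagger=\pmb\phi^\dagger\circ k_\phi^{-1}$, hence $(\phi^\ddagger)^\dagger=(k_\phi^{-1})^\dagger\circ\pmb\phi$. Since $k_\phi$ is a diagonal matrix it is self-adjoint in $\MatR$ (the dagger is the matrix transpose), and so is $k_\phi^{-1}$; therefore $(\phi^\ddagger)^\dagger=k_\phi^{-1}\circ\pmb\phi$, which completes the argument.

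There is essentially no obstacle: all the combinatorial content sits in \Cref{prop:semigroupHom}, and what is left is only the bookkeeping of how $\dagger$ interacts with $\circ$ and $\otimes$ (standard in a dagger monoidal category) together with the self-adjointness of the diagonal matrix $k_\phi$. The one point worth flagging explicitly in the write-up is why the equality \eqref{eq:cosemHom} holds unconditionally under the surjectivity hypothesis, since it is exactly that which lets the dagger be applied to an honest identity of morphisms rather than an entrywise-conditional one.
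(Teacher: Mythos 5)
Your argument is correct and is essentially the paper's own proof: identify $k_\phi^{-1}\circ\pmb\phi$ with $(\phi^\ddagger)^\dagger$ via \eqref{eq:diagonalK} and the self-adjointness of the diagonal matrix $k_\phi$, then apply the dagger to the semigroup-morphism identity for $\phi^\ddagger$ from \Cref{prop:semigroupHom} (where surjectivity of $\phi_1$ is exactly what makes that an unconditional equality). Your write-up merely spells out the dagger bookkeeping that the paper leaves implicit.
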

\begin{proof}
    Note that $k^{-1}_\phi\circ\pmb\phi=(\phi^\ddagger)^\dagger$. Hence the statement
$(\phi^\ddagger)^\dagger$ preserves comultiplication $\Dnabla^\dagger$ is equivalent
to the statement $\phi^\ddagger$ preserves multiplication $\Dnabla$ and this is Proposition~\ref{prop:semigroupHom}.
\end{proof}
\begin{lemma}\label{lem:ddagger}
Let $(\phi_0,\phi_1)\colon (X,S)\rightarrow (X',S')$ be a strongly admissible morphism. Then $\Dnabla\circ (\id\otimes
k_\phi)=k_\phi\circ \Dnabla$.
\end{lemma}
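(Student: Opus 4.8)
The plan is to unwind both composites into matrix entries. First I note that, since $k_\phi\colon S'\to S'$ lives on the codomain side, the $\Dnabla$ appearing in the statement must be the dagger multiplication of $(X',S')$, and both sides are morphisms $S'\otimes S'\to S'$. Because $k_\phi$ is diagonal with $(k_\phi)_{b'}^{b'}=k_{t(b')}$, a direct entrywise computation gives
\[
\bigl(\Dnabla\circ(\id_{S'}\otimes k_\phi)\bigr)_{a'b'}^{c'}=k_{t(b')}\cdot\Dnabla_{a'b'}^{c'},
\qquad
\bigl(k_\phi\circ\Dnabla\bigr)_{a'b'}^{c'}=k_{t(c')}\cdot\Dnabla_{a'b'}^{c'}
\]
for all $a',b',c'\in S'$. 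So the lemma reduces to the pointwise claim that $k_{t(b')}=k_{t(c')}$ whenever $\Dnabla_{a'b'}^{c'}>0$ (both sides vanishing otherwise).

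To prove that claim I would argue as follows. By \eqref{eq:daggerNabla}, $\Dnabla_{a'b'}^{c'}>0$ if and only if $\nabla_{a'b'}^{c'}>0$. By \Cref{lem:daggerMonoid} the triple $(S',\Dnabla,e')$ is a monoid, so \Cref{lemma:sourcesTargets} applies and yields $t(b')=t(c')$; equivalently, one may use $\nabla_{a'b'}^{c'}=\nabla_{b'^{-1}a'^{-1}}^{c'^{-1}}$ together with the ``source'' assertion of \Cref{lemma:sourcesTargets} and the identity $t(d)=s(d^{-1})$. In either case $t(b')$ and $t(c')$ are the very same unit color of $S'$, hence $k_{t(b')}=k_{t(c')}$; note that the actual values of the numbers $k_{p'}$ never enter the argument.

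I expect no genuine obstacle here: the proof is pure bookkeeping. The only points requiring care are reading $\Dnabla$ in the statement as the dagger structure on the codomain $(X',S')$ (forced by the type of $k_\phi$), extracting the equality of \emph{targets} rather than of sources from \Cref{lemma:sourcesTargets}, and remembering that strong admissibility of $(\phi_0,\phi_1)$ is used only implicitly, to guarantee that the quantities $k_{p'}$ of \eqref{eq:kp} are well defined.
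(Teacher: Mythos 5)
Your proof is correct and matches the paper's argument: the paper's proof is exactly the entrywise observation that $\Dnabla_{a'b'}^{c'}\cdot k_{t(b')}=\Dnabla_{a'b'}^{c'}\cdot k_{t(c')}$ because $t(b')=t(c')$ whenever the $\nabla$ entry is nonzero. You merely add the (correct) justification of that target equality via \Cref{lemma:sourcesTargets} (prudently also via the $\nabla_{a'b'}^{c'}=\nabla_{b'^{-1}a'^{-1}}^{c'^{-1}}$ route, which sidesteps the typo in that lemma's statement), which the paper leaves implicit.
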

\begin{proof}
    For each $a',b',c'\in S'$, we have $\dnabla{a'}{b'}{c'}\cdot
k_{t(b')}=\dnabla{a'}{b'}{c'}\cdot k_{t(c')}$, since $t(b')=t(c')$ whenever the
$\nabla$ entry is nonzero.
\end{proof}
\begin{corollary}\label{cor:ddagger}
    Let $(\phi_0,\phi_1)\colon (X,S)\rightarrow (X',S')$ be a strongly admissible
morphism. If $\phi_1$ is surjective,
then $\Dnabla\circ (\pmb\phi^\dagger\otimes\pmb\phi^\dagger)=
\pmb\phi^\dagger\circ\Dnabla\circ(k_\phi\otimes\id)$.
\end{corollary}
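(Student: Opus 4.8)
The plan is to derive \Cref{cor:ddagger} by combining \Cref{prop:semigroupHom} with the second identity in \eqref{eq:diagonalK}, namely $\phi^\ddagger=\pmb\phi^\dagger\circ k^{-1}_\phi$, together with the commutation relation from \Cref{lem:ddagger}. The key point is that \Cref{prop:semigroupHom} asserts $\phi^\ddagger\colon(S',\Dnabla)\to(S,\Dnabla)$ is a semigroup morphism when $\phi_1$ is surjective, i.e.
\[
\Dnabla\circ(\phi^\ddagger\otimes\phi^\ddagger)=\phi^\ddagger\circ\Dnabla.
\]
The goal is to translate this into a statement about $\pmb\phi^\dagger$ alone by eliminating the diagonal matrices $k_\phi^{-1}$ via \Cref{lem:ddagger}.

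First I would substitute $\phi^\ddagger=\pmb\phi^\dagger\circ k^{-1}_\phi$ into the left-hand side, obtaining
\[
\Dnabla\circ\bigl((\pmb\phi^\dagger\circ k^{-1}_\phi)\otimes(\pmb\phi^\dagger\circ k^{-1}_\phi)\bigr)
=\Dnabla\circ(\pmb\phi^\dagger\otimes\pmb\phi^\dagger)\circ(k^{-1}_\phi\otimes k^{-1}_\phi),
\]
using functoriality of $\otimes$. For the right-hand side, the same substitution gives $\pmb\phi^\dagger\circ k^{-1}_\phi\circ\Dnabla$; now I apply \Cref{lem:ddagger} on $(X',S')$, which reads $\Dnabla\circ(\id\otimes k_\phi)=k_\phi\circ\Dnabla$, equivalently (multiplying by $k^{-1}_\phi$ on the left and by $(\id\otimes k^{-1}_\phi)$ on the right, since these diagonal matrices are invertible) $k^{-1}_\phi\circ\Dnabla=\Dnabla\circ(\id\otimes k^{-1}_\phi)$. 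Hence the right-hand side becomes $\pmb\phi^\dagger\circ\Dnabla\circ(\id\otimes k^{-1}_\phi)$. Equating the two sides and postcomposing--- precomposing suitably, I would cancel the common factor $(\id\otimes k^{-1}_\phi)$, which is invertible, to reduce to
\[
\Dnabla\circ(\pmb\phi^\dagger\otimes\pmb\phi^\dagger)\circ(k^{-1}_\phi\otimes\id)=\pmb\phi^\dagger\circ\Dnabla.
\]
Finally, precomposing both sides with $(k_\phi\otimes\id)$ yields exactly the claimed identity $\Dnabla\circ(\pmb\phi^\dagger\otimes\pmb\phi^\dagger)=\pmb\phi^\dagger\circ\Dnabla\circ(k_\phi\otimes\id)$.

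The main obstacle I anticipate is bookkeeping: one must be careful about \emph{which} copy of the diagonal matrix $k_\phi$ is being moved past $\Dnabla$, since \Cref{lem:ddagger} only lets one commute $k_\phi$ with $\Dnabla$ on the \emph{output} leg and the \emph{second input} leg (reflecting $t(b')=t(c')$), not the first input leg. So the correct bracketing of the tensor factors and the order in which the two $k^{-1}_\phi$ factors are eliminated matters; in particular only one of the two $k^{-1}_\phi$'s on the left can be absorbed through $\Dnabla$ to match the single $k^{-1}_\phi$ on the right, while the other remains and is precisely what produces the $(k_\phi\otimes\id)$ correction after precomposition. Everything else is routine manipulation of compositions and tensor products of diagonal matrices, using only that $k_\phi$ is an invertible diagonal matrix.
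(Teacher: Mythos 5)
Your proposal is correct and follows essentially the same route as the paper: the paper's string-diagram computation likewise combines \eqref{eq:diagonalK}, the semigroup-morphism property of $\phi^\ddagger$ from \Cref{prop:semigroupHom}, and the commutation $k^{-1}_\phi\circ\Dnabla=\Dnabla\circ(\id\otimes k^{-1}_\phi)$ from \Cref{lem:ddagger}, merely running the chain forward instead of substituting and cancelling invertible diagonal factors. Your remark about which tensor leg $k_\phi$ can be moved through is exactly the point that makes the $(k_\phi\otimes\id)$ correction appear, so the bookkeeping is handled correctly.
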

\begin{proof}
We will prove the equality using string diagrams as follows. In the computation, the first step is by~\eqref{eq:diagonalK}, the second step follows from Proposition~\ref{prop:semigroupHom}, and the last step is by Lemma~\ref{lem:ddagger} and~\eqref{eq:diagonalK}:
\begin{center}
\vcbox{\includegraphics{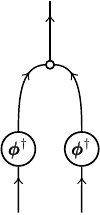}}
\vcbox{$=$\hspace*{0.7em}}
\vcbox{\includegraphics{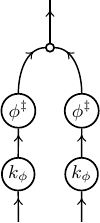}}
\vcbox{\hspace*{0.7em}$=$}
\vcbox{\includegraphics{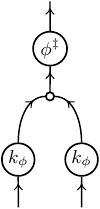}}
\vcbox{$=$}
\vcbox{\includegraphics{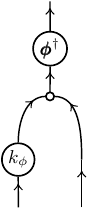}}
\end{center}
\end{proof}
We can express the image and the kernel of a strongly admissible morphism in terms of $\phi^\ddagger$.
\begin{lemma}\label{lem:projections}
Let $(\phi_0,\phi_1)\colon (X,S)\rightarrow (X',S')$ be a strongly admissible morphism of coherent configurations. Then
both $\pmb\phi\circ\phi^\ddagger\colon (S',\Dnabla_{S'})\rightarrow (S',\Dnabla_{S'})$ and $\phi^\ddagger\circ\pmb\phi\colon
(S,\Dnabla_S)\rightarrow (S,\Dnabla_S)$ are idempotent endomorphisms of semigroups. Moreover, for each $a',b'\in S'$ we
have
    \begin{equation}\label{eq:im}
        (\pmb\phi\circ\phi^\ddagger)_{a'}^{b'}=\begin{cases}
        1 & \text{if\ } a'=b'\in\mathrm{Im}(\phi_1),\\
        0 & \text{otherwise}
    \end{cases}
    \end{equation}
    and for each $a,b\in S$:
    \begin{equation}\label{eq:ker}
        (\phi^\ddagger\circ\pmb\phi)_a^b=\begin{cases}
        \frac{1}{k_{t(\phi_1(a))}}\sqrt{\valphi{a}\valphi{b}} & \text{if\ } \phi_1(a)=\phi_1(b),\\
        0 & \text{otherwise}.
    \end{cases}
    \end{equation}
\end{lemma}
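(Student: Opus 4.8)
The plan is to prove the two explicit matrix formulas \eqref{eq:im} and \eqref{eq:ker} by computing entries, and then to deduce idempotency and the semigroup-morphism property from them. I use throughout the defining formulas $\pmb\phi_a^{b'}=\sqrt{\valphi{a}}$ when $\phi_1(a)=b'$ (and $0$ otherwise), from \Cref{lemma:dagSemHom}, and $(\phi^\ddagger)_{b'}^a=\frac{1}{k_{t(b')}}\pmb\phi_a^{b'}$, from \Cref{def:ddager}, together with $\valphi{a}=\val{a}/\val{\phi_1(a)}$.

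For \eqref{eq:ker} one computes
\[
(\phi^\ddagger\circ\pmb\phi)_a^b=\sum_{c'\in S'}(\phi^\ddagger)_{c'}^b\,\pmb\phi_a^{c'}=\sum_{c'\in S'}\frac{1}{k_{t(c')}}\,\pmb\phi_b^{c'}\,\pmb\phi_a^{c'},
\]
and a summand is nonzero only when $\phi_1(a)=\phi_1(b)=c'$, giving $\frac{1}{k_{t(\phi_1(a))}}\sqrt{\valphi{a}\,\valphi{b}}$. The analogous computation gives $(\pmb\phi\circ\phi^\ddagger)_{a'}^{b'}=\frac{1}{k_{t(a')}}\sum_{c\in S}\pmb\phi_c^{b'}\pmb\phi_c^{a'}$, which vanishes unless $a'=b'$ and then equals $\frac{1}{k_{t(a')}}\sum_{\phi_1(c)=a'}\valphi{c}$; so \eqref{eq:im} is equivalent to the identity $\sum_{\phi_1(c)=a'}\valphi{c}=k_{t(a')}$ for every $a'\in\mathrm{Im}(\phi_1)$. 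This is the one genuinely substantial step; I would prove it by double counting, generalizing \eqref{eq:kp}. Fix $c_0$ with $\phi_1(c_0)=a'$ and an edge $(x,w)\in c_0$, and put $x'=\phi_0(x)$, $z'=\phi_0(w)$, so $(x',z')\in a'$ and $z'$ has unit color $t(a')$. Since $\phi_1$ is injective on $E_S$ (\Cref{lemma:charstradmissible}) and $\phi_1(s(c))=s(\phi_1(c))$ (\Cref{prop:laxmonoidmorphism}), every $c$ with $\phi_1(c)=a'$ satisfies $s(c)=s(c_0)=\langle x,x\rangle$, hence $(x,c)\in\supp(\gamma)$, so $\valphi{c}$ is the number of $z$ with $(x,z)\in c$ and $\phi_0(z)=z'$. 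These sets, taken over all such $c$, are pairwise disjoint, and their union is $\phi_0^{-1}(z')$: any $z$ with $\phi_0(z)=z'$ has $(\phi_0(x),\phi_0(z))=(x',z')\in a'$, which forces $\phi_1(\langle x,z\rangle)=a'$. Hence the sum equals $|\phi_0^{-1}(z')|=k_{t(a')}$.

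Granting \eqref{eq:im}--\eqref{eq:ker}, the remaining assertions are formal. By \eqref{eq:im}, $\pmb\phi\circ\phi^\ddagger$ is a diagonal $0,1$-matrix, hence idempotent; and since $\pmb\phi$ is supported on $\mathrm{Im}(\phi_1)$, \eqref{eq:im} also gives $(\pmb\phi\circ\phi^\ddagger)\circ\pmb\phi=\pmb\phi$, so that $(\phi^\ddagger\circ\pmb\phi)^2=\phi^\ddagger\circ(\pmb\phi\circ\phi^\ddagger)\circ\pmb\phi=\phi^\ddagger\circ\pmb\phi$. For the semigroup-morphism claims, $\pmb\phi$ preserves $\Dnabla$ by \Cref{lemma:dagSemHom}, and the proof of \Cref{prop:semigroupHom} shows that \eqref{eq:cosemHom} --- the statement that $\phi^\ddagger$ intertwines $\Dnabla_{S'}$ with $\Dnabla_S$ --- holds as soon as its inputs lie in $\mathrm{Im}(\phi_1)$. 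Composing appropriately with $\pmb\phi\otimes\pmb\phi$, whose image lies in $\mathrm{Im}(\phi_1)\otimes\mathrm{Im}(\phi_1)$, then shows that both $\phi^\ddagger\circ\pmb\phi$ and $\pmb\phi\circ\phi^\ddagger$ commute with the multiplications; here one also needs that $\mathrm{Im}(\phi_1)$ is a sub-semigroup of $(S',\Dnabla_{S'})$, which for $a',b'\in\mathrm{Im}(\phi_1)$ with $\nabla_{a'b'}^{c'}>0$ follows by lifting a witnessing triangle through strong admissibility.

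The main obstacle is the double-counting identity $\sum_{\phi_1(c)=a'}\valphi{c}=k_{t(a')}$ and, more generally, keeping careful track of which vertices and colors lie in $\mathrm{Im}(\phi_0)$ and $\mathrm{Im}(\phi_1)$; once that is done the entrywise computations for \eqref{eq:im}--\eqref{eq:ker} and the idempotency are routine, and the semigroup-morphism statements reduce to \Cref{lemma:dagSemHom} and \Cref{prop:semigroupHom}.
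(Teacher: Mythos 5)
Your treatment of the computational core of the lemma is correct and follows the same route as the paper: you compute \eqref{eq:ker} and \eqref{eq:im} entrywise from \eqref{eq:daggerAdmissibleEntries} and \eqref{eq:daggerHom}, and you get idempotency of $\phi^\ddagger\circ\pmb\phi$ from $\pmb\phi\circ\phi^\ddagger\circ\pmb\phi=\pmb\phi$, exactly as in the paper. You are in fact more careful than the paper on the one substantive point: the identity $\sum_{\phi_1(c)=a'}\valphi{c}=k_{t(a')}$ for an arbitrary $a'\in\mathrm{Im}(\phi_1)$, which the paper uses silently (it only records \eqref{eq:kp} for unit colors $p'$). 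Your double-counting proof of it is correct: injectivity of $\phi_1$ on $E_S$ guarantees all $c$ with $\phi_1(c)=a'$ have the common source $\langle x,x\rangle$, so \Cref{prop:phionehat}(b) applies, and the fibres over a fixed $z'\in\mathrm{Im}(\phi_0)$ of unit color $t(a')$ partition $\phi_0^{-1}(z')$. This is a genuine (if small) improvement in rigor over the paper's proof.

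The gap is in the part the paper's own proof does not address either, namely that $\pmb\phi\circ\phi^\ddagger$ preserves $\Dnabla_{S'}$. By \eqref{eq:im} this map is the $0,1$-diagonal projection onto $\mathrm{Im}(\phi_1)$, so preservation of $\Dnabla_{S'}$ amounts to: whenever $\Dnabla_{a'b'}^{c'}>0$, one has $a',b'\in\mathrm{Im}(\phi_1)$ \emph{if and only if} $c'\in\mathrm{Im}(\phi_1)$. Your ``$\mathrm{Im}(\phi_1)$ is a sub-semigroup'' argument (which can indeed be made to work by basing the witnessing triangle at a vertex of $\mathrm{Im}(\phi_0)$) gives only the forward implication; the reverse one is not addressed, and it can fail. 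Take for $(X,S)$ the one-vertex scheme and for $(X',S')$ the scheme $\mathcal{T}_2$ with unit color $u'$ and non-unit color $g'$, with $\phi_0$ sending the vertex to one of the two vertices and $\phi_1$ sending the unique color to $u'$; this is strongly admissible, $\mathrm{Im}(\phi_1)=\{u'\}$, and $\Dnabla_{g'g'}^{u'}=1$, so $((\pmb\phi\circ\phi^\ddagger)\circ\Dnabla)_{g'g'}^{u'}=1$ while $(\Dnabla\circ((\pmb\phi\circ\phi^\ddagger)\otimes(\pmb\phi\circ\phi^\ddagger)))_{g'g'}^{u'}=0$. So this sub-claim cannot be obtained by your composition trick (post-composing with $\pmb\phi$ does not remove the discrepancy on inputs outside the image) and in fact needs an extra hypothesis such as surjectivity of $\phi_1$, in which case it is trivial because $\pmb\phi\circ\phi^\ddagger=\id$. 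Your argument for the other map, $\phi^\ddagger\circ\pmb\phi$, via \Cref{lemma:dagSemHom} and the image-restricted form of \eqref{eq:cosemHom}, is sound, and that is the claim actually used later in \Cref{thm:projOnCoimage}; so the defect lies in the statement you were asked to prove rather than in your computations, but as a proof of the statement as written, this part remains a genuine gap.
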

\begin{proof}
    We compute \begin{align*}
        (\pmb\phi\circ\phi^\ddagger)_{a'}^{b'}&=\sum_{p\in
        S,a'=\phi_1(p)=b'}\sqrt{\valphi{p}}\sqrt{\valphi{p}}\frac{1}{k_{t(b')}}
        =\delta_{a'}^{b'}\frac{1}{k_{t(a')}}\sum_{p\in S,\phi_1(p)=a'}\valphi{p}\\
        &=\begin{cases}
        \frac{1}{k_{t(a')}}k_{t(a')}=1 & \text{if\ } a'=b'\in\mathrm{Im}(\phi_1),\\
        0 & \text{otherwise}.
    \end{cases}
    \end{align*}
The equation~\eqref{eq:ker} is immediate by definition. Similarly, $\pmb\phi\circ\phi^\ddagger$ is obviously idempotent,
and from $\pmb\phi\circ\phi^\ddagger\circ\pmb\phi=\pmb\phi$ it easily follows that $\phi^\ddagger\circ\pmb\phi$ is an idempotent as well.   
\end{proof}
\begin{theorem}\label{thm:projOnCoimage}
    Let $(\phi_0,\phi_1)\colon (X,S)\rightarrow (X',S')$ be a strongly admissible morphism. Then
    $\phi^\ddagger\circ\pmb\phi$ is a semigroup endomorphism of $(S,\Dnabla)$ and a
    cosemigroup endomorphism $(S,\Dnabla^\dagger)$.
\end{theorem}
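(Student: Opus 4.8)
Write $E:=\phi^\ddagger\circ\pmb\phi\colon S\to S$. The plan has three movements: (i) obtain the semigroup statement with essentially no new work, (ii) observe that $E$ is a \emph{self-adjoint} morphism of $\MatR$, and (iii) deduce the cosemigroup statement by applying $\dagger$ to the semigroup identity. Step (ii) is the only genuinely new ingredient; steps (i) and (iii) are bookkeeping.

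For (i): by \Cref{lemma:dagSemHom} the map $\pmb\phi$ preserves multiplication, $\Dnabla'\circ(\pmb\phi\otimes\pmb\phi)=\pmb\phi\circ\Dnabla$, and \Cref{lem:projections} already records that $E$ is an idempotent semigroup endomorphism of $(S,\Dnabla)$; I would rederive this to keep the dependence on the image explicit. Starting from $\Dnabla\circ(E\otimes E)=\Dnabla\circ(\phi^\ddagger\otimes\phi^\ddagger)\circ(\pmb\phi\otimes\pmb\phi)$, note that precomposition with $\pmb\phi\otimes\pmb\phi$ only ever feeds $\Dnabla\circ(\phi^\ddagger\otimes\phi^\ddagger)$ pairs of colours lying in $\mathrm{Im}(\phi_1)$; on such inputs identity \eqref{eq:cosemHom} of \Cref{prop:semigroupHom} reads exactly $\Dnabla\circ(\phi^\ddagger\otimes\phi^\ddagger)=\phi^\ddagger\circ\Dnabla'$, so $\Dnabla\circ(E\otimes E)=\phi^\ddagger\circ\Dnabla'\circ(\pmb\phi\otimes\pmb\phi)=\phi^\ddagger\circ\pmb\phi\circ\Dnabla=E\circ\Dnabla$.

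For (ii) and (iii): by \eqref{eq:ker} we have $E_a^b=\tfrac{1}{k_{t(\phi_1(a))}}\sqrt{\valphi{a}\,\valphi{b}}$ whenever $\phi_1(a)=\phi_1(b)$, and $E_a^b=0$ otherwise. Since $\phi_1(a)=\phi_1(b)$ forces $t(\phi_1(a))=t(\phi_1(b))$ and the factor $\sqrt{\valphi{a}\,\valphi{b}}$ is symmetric in $a,b$, the matrix $E$ is symmetric, i.e.\ $E^\dagger=E$ (recall $\dagger$ is transposition in $\MatR$). Applying $\dagger$ to the identity $E\circ\Dnabla=\Dnabla\circ(E\otimes E)$ and using $(g\circ f)^\dagger=f^\dagger\circ g^\dagger$, $(f\otimes g)^\dagger=f^\dagger\otimes g^\dagger$, $\Dnabla^{\dagger\dagger}=\Dnabla$ and $E^\dagger=E$ turns it into $\Dnabla^\dagger\circ E=(E\otimes E)\circ\Dnabla^\dagger$, which is precisely the assertion that $E$ is a cosemigroup endomorphism of $(S,\Dnabla^\dagger)$.

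The one place needing care is inside step (i) when $\phi_1$ is not surjective, since \Cref{prop:semigroupHom} then only guarantees \eqref{eq:cosemHom} when its left-hand side is nonzero. This proviso is harmless once both colour arguments lie in $\mathrm{Im}(\phi_1)$: equation \eqref{eq:help1} is valid for every $a'\in\mathrm{Im}(\phi_1)$, so a vanishing left-hand side forces $\nabla^{\phi_1(c)}_{a'b'}=0$, whence the right-hand side of \eqref{eq:cosemHom} vanishes as well; thus $\Dnabla\circ(\phi^\ddagger\otimes\phi^\ddagger)$ and $\phi^\ddagger\circ\Dnabla'$ genuinely agree on all inputs produced by $\pmb\phi\otimes\pmb\phi$. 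Everything after that is formal dagger manipulation plus reading the symmetry of $E$ off \eqref{eq:ker}, so I expect no further difficulty.
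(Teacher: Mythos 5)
Your proof is correct and follows essentially the same route as the paper: establish the semigroup identity $E\circ\Dnabla=\Dnabla\circ(E\otimes E)$ for $E=\phi^\ddagger\circ\pmb\phi$, observe that $E$ is self-adjoint, and transfer to the cosemigroup statement by applying $\dagger$. The only differences are cosmetic — the paper gets self-adjointness from \eqref{eq:diagonalK} and delegates the semigroup part to \Cref{lem:projections}, whereas you read symmetry off \eqref{eq:ker} and rederive the semigroup identity from \Cref{prop:semigroupHom} and \Cref{lemma:dagSemHom}, with a welcome explicit treatment of the non-surjective case.
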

\begin{proof}
    Since $\phi^\ddagger\circ\pmb\phi$ is  self-adjoint by \eqref{eq:diagonalK}, it is
    enough to prove the first equality and get the second one by applying the dagger.
    However, the first equality is proved in Lemma~\ref{lem:projections}. 
\end{proof}

\section{The valency vector}
Let $(X,S)$ be a coherent configuration and let $\mathcal{F}=(S,\Dnabla,e)$ be the
corresponding dagger Frobenius structure. Even though the dagger-structure constants
have better properties in computations (in particular when dealing with
admissible morphisms), their combinatorial meaning is not clear. It is a
natural question of whether we can reconstruct the original structure constants
from $\Dnabla$. In this section, we give an affirmative answer. Obviously,
knowing $\Dnabla$ and valencies of colors is sufficient to express $\nabla$. 
In this section, we are going to show how to extract the information
about valencies from $\Dnabla$ in the form of the \emph{valency vector} $\mathbf v\colon I\to S$ given by
the rule
\[
    {\mathbf v}_*^a=\sqrt{\val a}
\]
for all $a\in S$.
For each pair of "composable" colors $a,b$ ($t(a)=s(b)$), the corresponding valencies fit in the equation
\begin{equation}\label{eq:cop}
    \val{a}\val{b}=\sum_c \nabla^{c}_{ab}\val{c}. 
\end{equation}
This follows from an easy calculation, where the right-hand side is obtained by counting the number of two consecutive edges of color $a$ and $b$ starting at a given vertex. 

In the dagger Frobenius structure associated to a coherent configuration, ~\eqref{eq:cop} changes to
\begin{multline}\label{eq:copp}
    \sqrt{\val{a}\val{b}}=\frac{1}{\sqrt{\val{a}\val{b}}}\val{a}\val{b}=\frac{1}{\sqrt{\val{a}\val{b}}}\sum_c\nabla^c_{a b}\val{c}\\
    =\sum_c\sqrt{\frac{\val{c}}{\val{a}\val{b}}}\nabla^c_{a b}\sqrt{\val{c}}=\sum_c\Dnabla^c_{a b}\sqrt{\val{c}}.
\end{multline}
In the case of an association scheme, equation~\eqref{eq:copp} holds for each pair of colors $a$ and $b$, hence the
valency vector $\bf v$ is a \emph{copyable state}, \cite[Definition 4.23]{heunen2019categories}.

We can express valency vectors and gain insight into their properties
using the results from the previous section about strongly
admissible morphisms. 
Consider for some coherent configuration $(X,S)$ the admissible morphism
$!\colon(X,S)\rightarrow (I,I)$. Following pattern~\eqref{eq:daggerAdmissibleEntries} for
the induced semigroup morphism $\pmb !\colon S\to I$, we derive
\begin{equation}
    \pmb !_a^*=\sqrt{\valphi{a}}=\sqrt{\val{a}}.
\end{equation}
This way, we can identify the valency vector $\mathbf{v}$ with the matrix $\pmb !^\dagger$.
If $(X,S)$ is a scheme, then $!$ is a strongly admissible surjective morphism and we can apply results from the previous section. For a general coherent configuration $(X,S)$, we utilize the splitting of $!=!_S$ through the strongly admissible $(\tau_0,\tau_1)\colon(X,S)\rightarrow \mathcal{D}_{E_S}$ described in Example~\ref{ex:tau}. Consequently, we can express the valency vector as
\begin{equation}\label{eq:splitOf!}
    \mathbf{v}=(!_S)^\dagger=(!_\mathcal{D}\circ\tau)^\dagger=\tau^\dagger\circ {!_\mathcal{D}}^\dagger.
\end{equation}
\begin{lemma}\label{lemma:valency}
    The valency vector of a coherent configuration $(X,S)$ is an eigenvector of $\Dnabla\circ\Dnabla^\dagger$ with an eigenvalue $n=|X|$.
\end{lemma}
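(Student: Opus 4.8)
The plan is to evaluate $\bigl(\Dnabla\circ\Dnabla^\dagger\bigr)\circ\mathbf v$ entrywise and to reduce the claim to the combinatorial identity $\sum_{b,c\in S}\nabla_{bc}^a=|X|$. Write $O:=\Dnabla\circ\Dnabla^\dagger\colon S\to S$. Since the dagger in $\MatR$ is the matrix transpose, $(\Dnabla^\dagger)_d^{bc}=\Dnabla_{bc}^d$, and hence for every $a\in S$
\[
(O\circ\mathbf v)_*^a
=\sum_{b,c,d\in S}\Dnabla_{bc}^a\,\Dnabla_{bc}^d\,\sqrt{\val d}
=\sum_{b,c\in S}\Dnabla_{bc}^a\Bigl(\sum_{d\in S}\Dnabla_{bc}^d\sqrt{\val d}\Bigr).
\]

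First I would evaluate the inner sum $\sum_{d\in S}\Dnabla_{bc}^d\sqrt{\val d}$, which is precisely the $(b,c)$-entry of $\Dnabla^\dagger\circ\mathbf v$. If $t(b)\neq s(c)$, then $\nabla_{bc}^d=0$ for all $d$ by \Cref{lemma:sourcesTargets}, hence $\Dnabla_{bc}^d=0$ and the inner sum vanishes; if $t(b)=s(c)$, then equation~\eqref{eq:copp} gives $\sum_{d\in S}\Dnabla_{bc}^d\sqrt{\val d}=\sqrt{\val b\,\val c}$. Plugging this in and using the defining formula $\Dnabla_{bc}^a=\sqrt{\val a/(\val b\,\val c)}\,\nabla_{bc}^a$ from~\eqref{eq:daggerNabla}, every surviving summand collapses to $\sqrt{\val a}\,\nabla_{bc}^a$, so that
\[
(O\circ\mathbf v)_*^a=\sqrt{\val a}\sum_{\substack{b,c\in S\\ t(b)=s(c)}}\nabla_{bc}^a=\sqrt{\val a}\sum_{b,c\in S}\nabla_{bc}^a,
\]
the last equality because the omitted terms carry a vanishing structure constant.

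It then remains to prove that $\sum_{b,c\in S}\nabla_{bc}^a=|X|$ for each $a\in S$. Here I would fix any $(x,y)\in a$; by definition $\nabla_{bc}^a$ counts the $z\in X$ with $(x,z)\in b$ and $(z,y)\in c$, and since $S$ partitions $X\times X$ every $z\in X$ contributes to exactly one pair $(b,c)=(\langle x,z\rangle,\langle z,y\rangle)$. Thus the double sum equals $|X|=n$, and we conclude $(O\circ\mathbf v)_*^a=n\sqrt{\val a}=n\,\mathbf v_*^a$ for all $a$, i.e.\ $O\mathbf v=n\mathbf v$.

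I do not expect a genuine obstacle: the computation is short once the transpose conventions are kept straight, and the only point requiring care is the passage between the ``composable'' pairs $(b,c)$ with $t(b)=s(c)$, for which \eqref{eq:copp} applies, and the sum over all pairs, which is legitimised by \Cref{lemma:sourcesTargets}. One could instead derive the result from the splitting $\mathbf v=\tau^\dagger\circ{!_{\mathcal D}}^\dagger$ recorded in~\eqref{eq:splitOf!}, but the direct route above seems the most transparent.
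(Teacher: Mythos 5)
Your proof is correct, but it takes a genuinely different route from the paper. You compute $(\Dnabla\circ\Dnabla^\dagger)\circ\mathbf v$ entrywise: the inner sum is evaluated via the copyability-type identity~\eqref{eq:copp} (legitimately restricted to composable pairs, with \Cref{lemma:sourcesTargets} disposing of the rest), each surviving term collapses to $\sqrt{\val a}\,\nabla_{bc}^a$, and the double-counting observation $\sum_{b,c\in S}\nabla_{bc}^a=|X|$ (each $z\in X$ determines exactly one pair $(\langle x,z\rangle,\langle z,y\rangle)$, since $S$ partitions $X\times X$) finishes the argument. The paper instead exploits the splitting $\mathbf v=\tau^\dagger\circ{!_{\mathcal D}}^\dagger$ through the strongly admissible morphism to the discrete configuration $\mathcal D_{E_S}$ from \Cref{ex:tau}, pushes the computation through \Cref{prop:semigroupHom} and \Cref{cor:ddagger} by string diagrams, and reduces everything to showing that $\Dnabla^{\mathcal D}\circ(k_\tau\otimes\id)\circ(\Dnabla^{\mathcal D})^\dagger$ is $n$ times the identity, via $\sum_{u\in E_S}|u|=n$. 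Your argument is more elementary and self-contained: it needs only the definition~\eqref{eq:daggerNabla}, the identity~\eqref{eq:copp}, and the partition axiom, so it could be read immediately after Section~8's opening. The paper's proof buys a demonstration of the admissible-morphism machinery of Section~7 in action and places the valency vector structurally as $\pmb !^\dagger$ factored through $\mathcal D_{E_S}$, which is the viewpoint reused later (e.g.\ in \Cref{thm:genLagrange}); note also that your closing remark already identifies the paper's alternative. One small point of care, which you handled correctly: \eqref{eq:copp} is only valid for composable pairs $t(b)=s(c)$, and the passage back to the unrestricted sum is justified because the omitted terms have vanishing structure constants.
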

\begin{proof}
For the sake of simplicity, denote $\mathcal{D}=\mathcal{D}_{E_S}$, and let
$(\tau_0,\tau_1)\colon (X,S)\rightarrow \mathcal{D}$ be a strongly
admissible surjective morphism from Example~\ref{ex:tau}.  Applying
equation~\eqref{eq:splitOf!}, Proposition~\ref{prop:semigroupHom}, and
Corollary~\ref{cor:ddagger} yields the first three equalities in the following
string diagram computation:
\begin{equation}\label{eq:valencyIsEigen}
\vcbox{\includegraphics{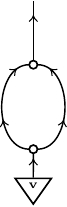}}
\hspace{.8em}\vcbox{$=$}\hspace{.8em}
\vcbox{\includegraphics{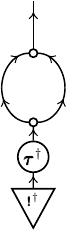}}
\hspace{.8em}
\vcbox{$=$}
\hspace{.8em}
\vcbox{\includegraphics{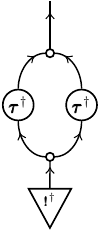}}
\hspace{.8em}
\vcbox{$=$}
\hspace{.8em}
\vcbox{\includegraphics{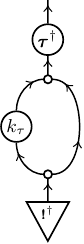}}
\hspace{.8em}
\vcbox{$=$}
\hspace{.8em}
\vcbox{\includegraphics{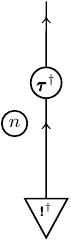}}
\hspace{.8em}
\vcbox{$=$}
\hspace{.8em}
\vcbox{\includegraphics{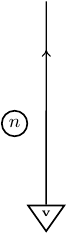}}
\end{equation}
Let us 
prove the fourth equality. For this, we first describe the entries of the diagonal matrix 
$k_\tau$. For a color $b=\{(v,u)\}$ of $\mathcal{D}$, 
where $(v,u)\in (E_S)^2$, the corresponding matrix entry is 
\begin{equation}\label{eq:kOfTau}
{(k_\tau)}^b_b=k_{t(b)}=|\tau_0^{-1}(u)|=|u|.    
\end{equation}
 Next, note that in $\mathcal{D}$, each color has valency equal to $1$, hence
 $\mathcal{D}$ is thin, moreover, $\Dnabla^\mathcal{D}=\nabla^\mathcal{D}$ has all entries
 in $\{0,1\}$. One can prove by the same way as in \Cref{thm:connections}(i) that 
$$\Dnabla^{\mathcal{D}}\circ(k_\tau\otimes \id)\circ(\Dnabla^{\mathcal{D}})^\dagger=\nabla^{\mathcal{D}}\circ(k_\tau\otimes \id)\circ(\nabla^{\mathcal{D}})^\dagger$$
is a diagonal matrix. Computing the entries, we get
    \begin{equation}\label{eq:pipes}
        (\Dnabla^{\mathcal{D}}\circ(k_\tau\otimes
	\id)\circ(\Dnabla^{\mathcal{D}})^\dagger)_a^a=\sum_{b,c}{(k_\tau)}_b^b((\nabla^\mathcal{D})_{b
	c}^{a})^2=\sum_{u\in E_S}|u|=n,
    \end{equation}
    where we  used~\eqref{eq:kOfTau} in the second step.
\end{proof}

Our next aim is to show that we can identify the valency vector among the eigenvectors of $\Dnabla\circ\Dnabla^\dagger$. The key tool is the following:
\begin{theorem}[Perron-Frobenius]\label{thm:PF}\cite{meyer2023matrix}
    Let $A$ be a square matrix with entries in $\mathbb{R}^+$. Then $A$ has, up to scalar multiple, a unique positive eigenvector $\mathbf v$. Moreover, $\mathbf v$ corresponds to the largest eigenvalue.
\end{theorem}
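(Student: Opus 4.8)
The plan is to recover the classical Perron theorem: reading ``entries in $\mathbb{R}^+$'' as \emph{strictly positive} entries (without this reading the claim is false — e.g.\ the identity matrix has the non-proportional positive eigenvectors $(1,2)$ and $(2,1)$), I would show that the spectral radius $\rho(A)$ is an eigenvalue of $A$, that it strictly dominates the modulus of every other eigenvalue, and that its eigenspace is one-dimensional and spanned by a strictly positive vector $\mathbf v$. The three assertions of the statement then follow at once, since a strictly positive eigenvector of $A$ must lie in that eigenspace and hence be a positive multiple of $\mathbf v$.

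First I would obtain existence of a nonnegative eigenvector by a soft argument. On the standard simplex $\Delta=\{x\in\mathbb R^n:x_i\ge 0,\ \sum_i x_i=1\}$ the map $T(x)=Ax/\|Ax\|_1$ is well defined and continuous, because $Ax\neq 0$ whenever $x\neq 0$ and $A$ is entrywise positive; so Brouwer's fixed-point theorem yields $v\in\Delta$ with $Av=\lambda v$, where $\lambda=\|Av\|_1>0$. Applying $A$ once more shows $v=\lambda^{-1}Av$ is in fact strictly positive.

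Next I would identify $\lambda$ with $\rho(A)$ and prove domination via the Collatz--Wielandt functional $r(x)=\min_{i:\,x_i>0}(Ax)_i/x_i$ defined for $0\neq x\ge 0$. This is scale-invariant and upper semicontinuous on $\Delta$, so $\mu:=\sup_{x\in\Delta}r(x)$ is attained at some $w\ge 0$, and any maximizer satisfies $Aw=\mu w$: otherwise $Aw-\mu w$ is a nonzero nonnegative vector, hence $A(Aw-\mu w)>0$ entrywise, which gives $r(Aw)>\mu$, a contradiction. Moreover, for any eigenvalue $\nu$ with $Az=\nu z$ the triangle inequality gives $A|z|\ge|\nu|\,|z|$ entrywise, so $r(|z|)\ge|\nu|$ and thus $|\nu|\le\mu=\rho(A)$; in particular the $\lambda$ from the first step equals $\rho(A)$.

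The hard part will be simplicity of the Perron eigenvalue and the strict domination, both handled by an equality-case analysis in which the strict positivity of $A$ is essential. For one-dimensionality: if $u$ is a real eigenvector for $\rho(A)$ and $v>0$ the vector above, set $t=\max_i u_i/v_i$; then $tv-u\ge 0$, it vanishes in some coordinate, and it is an eigenvector for $\rho(A)$, so if it were nonzero, applying $A$ would make it everywhere positive, which is impossible; hence $u=tv$. For strict domination, if $|\nu|=\rho(A)$ then equality must hold throughout $A|z|\ge|\nu|\,|z|$, which forces the entries $z_i$ to share a common phase, whence $\nu=\rho(A)$. I expect the existence part to be essentially free via Brouwer, the extremal characterization to be a short variational argument, and this last equality-case step to be the only place demanding genuine care.
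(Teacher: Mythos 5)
The paper gives no proof of this statement at all: it is quoted from Meyer's textbook and used as a black box, so there is nothing internal to compare your argument with; also, your reading of ``entries in $\mathbb{R}^+$'' as strict positivity is the intended one, and it matches how the paper applies the result (only after reducing to blocks $O_i^k$ whose entries are all positive). Judged on its own, most of your sketch is sound: the Brouwer fixed-point existence step, the upper semicontinuity of the Collatz--Wielandt functional $r$ on the simplex, the argument that a maximizer $w$ of $r$ satisfies $Aw=\mu w$, the bound $|\nu|\le\mu$ for every eigenvalue $\nu$ (so $\mu=\rho(A)$), the one-dimensionality of the $\rho(A)$-eigenspace, and the equality-case analysis when $|\nu|=\rho(A)$ are all correct as described.

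The genuine gap is the assertion ``in particular the $\lambda$ from the first step equals $\rho(A)$'', and equally the closing claim that your three facts (spectral radius is an eigenvalue, strictly dominant, simple with positive eigenvector) imply ``at once'' that every strictly positive eigenvector lies in the $\rho(A)$-eigenspace. At that point you only know $\lambda\le\mu=\rho(A)$; a strictly positive eigenvector attached to a strictly smaller real eigenvalue is not excluded by those facts, and excluding it is exactly part of what the theorem asserts. One extra step is needed, and your own comparison trick supplies it: if $Au=\nu u$ with $u>0$ and $Av=\rho(A)v$ with $v>0$, set $t=\max_i v_i/u_i$, so $tu-v\ge 0$ vanishes in some coordinate $j$; if $tu-v\neq 0$, then $A(tu-v)>0$ entrywise, giving $\nu t u_j>\rho(A)v_j=\rho(A)t u_j$ and hence $\nu>\rho(A)$, a contradiction; therefore $v=tu$ and $\nu=\rho(A)$. (Alternatively, apply the existence part to $A^{T}$ to obtain $y>0$ with $y^{T}A=\rho(A)y^{T}$ and pair: $\rho(A)\,y^{T}u=y^{T}Au=\nu\,y^{T}u$ with $y^{T}u>0$.) With this step added your proof is complete; note it also makes the Brouwer step redundant, since the Collatz--Wielandt maximizer $w$ is already a positive eigenvector for $\rho(A)$ after one application of $A$.
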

We aim to apply the Perron-Frobenius theorem to $O=\Dnabla\circ\Dnabla^\dagger$ identify
the valency vector. Note that if $O$ has all entries strictly positive, the valency vector
is identified as the positive eigenvector of the eigenvalue
$n$. In general, however,  we need to deal with the fact that $O$ might have some zero entries.

\begin{theorem}\label{thm:determineStructureConstants}
    Let $(S,\Dnabla,e)$ be a dagger Frobenius structure induced by a coherent configuration. Then the data $(S,\Dnabla,e)$ uniquely determines the structure constants $\nabla$.
\end{theorem}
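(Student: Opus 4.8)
The plan is to reconstruct the valencies, since $\nabla_{ab}^c=\sqrt{\val a\val b/\val c}\,\Dnabla_{ab}^c$ by \eqref{eq:daggerNabla}, with both sides vanishing when $\Dnabla_{ab}^c=0$. More precisely, I would reduce the problem to producing, from $(S,\Dnabla,e)$ alone, some positive vector $\tilde v\colon I\to S$ with
\[
\tilde v_a\tilde v_b/\tilde v_c=\sqrt{\val a\val b/\val c}\qquad\text{whenever }\Dnabla_{ab}^c>0,
\]
because then $\nabla_{ab}^c=(\tilde v_a\tilde v_b/\tilde v_c)\,\Dnabla_{ab}^c$ in all cases. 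As preliminaries I would recover, from the monoid $(S,\Dnabla,e)$ (a monoid by \Cref{lem:daggerMonoid}), the unit colours $E_S=\supp(e)$, the source/target maps $s,t\colon S\to E_S$ via \Cref{lemma:units}, and the involution ($a^{-1}$ is the unique $b$ with $\Dnabla_{ab}^p>0$ for some $p\in E_S$); all of these coincide with the combinatorial notions since $\Dnabla_{ab}^c>0\iff\nabla_{ab}^c>0$. Writing $S_{p,q}=\{a:s(a)=p,\ t(a)=q\}$, \Cref{lemma:sourcesTargets} shows that $\Dnabla_{ab}^c>0$ forces $t(a)=s(b)$ and $a\in S_{p,q}$, $b\in S_{q,r}$, $c\in S_{p,r}$ with $p=s(a)$, $q=t(a)$, $r=t(b)$; moreover every $S_{p,q}$ is non-empty and every colour has valency $\ge1$.

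The key step would be to show that $\mathbf v|_{S_{p,q}}$ is determined up to a positive scalar. To this end I would introduce, for $p,q\in E_S$, the matrix $T^{p,q}\colon S_{p,q}\to S_{p,q}$ with $(T^{p,q})_a^c=\sum_{b\in S_{p,p}}\Dnabla_{ba}^c$, which is computable from the data, and argue that it has all entries strictly positive: $(T^{p,q})_a^c>0$ means that for a fixed $(x,y)\in c$ there is a vertex $z$ in the fibre of $p$ with $(z,y)\in a$ (then $b=\langle x,z\rangle\in S_{p,p}$ witnesses $\nabla_{ba}^c>0$), and such a $z$ exists because $a^{-1}\in S_{q,p}$ is non-empty, i.e.\ $\val{a^{-1}}\ge1$. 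Then \eqref{eq:copp}, applied to the composable pairs $(b,a)$ with $b\in S_{p,p}$, yields $T^{p,q}\,\mathbf v|_{S_{p,q}}=\big(\sum_{b\in S_{p,p}}\sqrt{\val b}\big)\,\mathbf v|_{S_{p,q}}$, so $\mathbf v|_{S_{p,q}}$ is a positive eigenvector of the strictly positive matrix $T^{p,q}$ and hence, by Perron--Frobenius (\Cref{thm:PF}), the only one up to scale. For $p=q$ the block contains the unit colour $p$, of valency $1$, so normalising the $p$-entry to $1$ recovers $\mathbf v|_{S_{p,p}}$ exactly; for $p\ne q$ one gets only a positive eigenvector $\mathbf u^{p,q}$ proportional to $\mathbf v|_{S_{p,q}}$.

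It then remains to fix the scalars relating $\mathbf v|_{S_{p,q}}$ to the $\mathbf u^{p,q}$ (take $\mathbf u^{p,p}=\mathbf v|_{S_{p,p}}$). Writing $\mathbf v|_{S_{p,q}}=\nu_{p,q}\mathbf u^{p,q}$, applying \eqref{eq:copp} to a composable pair $a\in S_{p,q}$, $b\in S_{q,r}$ gives $\nu_{p,q}\nu_{q,r}=e_{p,q,r}\,\nu_{p,r}$, where $e_{p,q,r}=\big(\sum_{c\in S_{p,r}}\Dnabla_{ab}^c(\mathbf u^{p,r})_c\big)/\big((\mathbf u^{p,q})_a(\mathbf u^{q,r})_b\big)$ is a positive number independent of $a,b$ and computable from the data. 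This system, together with $\nu_{p,p}=1$, has the positive solution $(\nu_{p,q})$; I would fix any positive solution $(\nu'_{p,q})$ of it and set $\tilde v_a=\nu'_{s(a),t(a)}\,(\mathbf u^{s(a),t(a)})_a$. A short computation using $(\mathbf u^{p,q})_a=\sqrt{\val a}/\nu_{p,q}$ and $\nu'_{p,q}\nu'_{q,r}/\nu'_{p,r}=e_{p,q,r}=\nu_{p,q}\nu_{q,r}/\nu_{p,r}$ then confirms $\tilde v_a\tilde v_b/\tilde v_c=\sqrt{\val a\val b/\val c}$ for $\Dnabla_{ab}^c>0$, completing the reconstruction.

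I expect the main obstacle to be exactly the reducibility of $O=\Dnabla\circ\Dnabla^\dagger$ already flagged before the statement: \Cref{lemma:valency} never literally pins down $\mathbf v$, because the substitution $\val a\mapsto(c_{s(a)}/c_{t(a)})^2\,\val a$ leaves $\Dnabla$—and hence $O$—unchanged (for a group, $O$ is even a scalar matrix). Overcoming this requires two observations: (i) this coboundary ambiguity in the valencies is invisible to $\nabla$, so one only needs $\mathbf v$ up to it; and (ii) within each $(s,t)$-block the right matrices for Perron--Frobenius are not $O$ itself but the strictly positive $T^{p,q}$ built from left multiplication by the fibre colours. Everything else is bookkeeping with \eqref{eq:copp}.
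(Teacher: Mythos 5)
Your proposal is correct, but it takes a genuinely different route from the paper's. The paper works with the single matrix $O=\Dnabla\circ\Dnabla^\dagger$: it block-diagonalizes $O$ along the transitive closure of the relation $a\sim b\iff O_a^b\neq 0$, invokes \Cref{lemma:valency} to know that the valency vector restricts to a positive eigenvector of each block, applies \Cref{thm:PF} after passing to a power $O_i^k$ with strictly positive entries, normalizes the blocks containing a unit colour by $\val p=1$, and finally reaches \emph{all} colours through $\val a=\sum_{b\in S}\nabla_{ab}^a=\sum_{b\in P}\sqrt{\val b}\,\Dnabla_{ab}^a$, where $P=\{b:\Dnabla_{ab}^a>0\}$ lies in the $\sim$-class of $t(a)$; in particular the paper recovers the valencies themselves exactly. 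You bypass $O$ and \Cref{lemma:valency} altogether: you block the colours by source/target pairs $S_{p,q}$, build strictly positive matrices $T^{p,q}$ by summing left multiplication over the loop fibre $S_{p,p}$, apply Perron--Frobenius blockwise, pin the diagonal blocks via the unit colour, and patch the off-diagonal scalars with a cocycle system, observing that the residual coboundary freedom is invisible to $\nabla$. Your route buys strict positivity directly (no powers of $O$, no need for the eigenvalue $|X|$) at the cost of the bookkeeping with the scalars $\nu_{p,q}$; the paper's route buys the stronger conclusion that the valency vector itself is determined, so the coboundary rescalings you work around, although they formally preserve $\Dnabla$, are never realized by a coherent configuration inducing the same data. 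Two small points to tighten: with the paper's index convention your relation from \eqref{eq:copp} reads $\sum_c (T^{p,q})_a^c\sqrt{\val c}=\bigl(\sum_{b\in S_{p,p}}\sqrt{\val b}\bigr)\sqrt{\val a}$, i.e.\ $\mathbf v|_{S_{p,q}}$ is a positive eigenvector of $(T^{p,q})^\dagger$ rather than of $T^{p,q}$ as you wrote it (harmless, since the transpose is equally positive); and the existence of the vertex $z$ in your positivity argument uses that \emph{every} vertex of colour $q$ emits $\val{a^{-1}}\ge 1$ edges of colour $a^{-1}$, which follows from the constancy of $\nabla_{a^{-1}a}^{q}$ over all loops of colour $q$ and deserves to be stated explicitly.
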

\begin{proof}
Obviously, it is enough to determine the valency vector $\mathbf{v}$.
 Denote $O=\Dnabla\circ\Dnabla^\dagger$ and define a relation $\sim$ on the set of colors $S$ by the prescription
\begin{equation}
    a\sim b \Longleftrightarrow O_a^b\not= 0,
\end{equation}
in other words, $a\sim b$ if and only if there are some $p,q\in S$ such that
$\nabla^a_{pq}\nabla^b_{pq}>0$. 
It is easy to see that $\sim$ is reflexive and symmetric and there is some
$k\in\mathbb N$ such that that $(O^k)_a^b\not=0$ if and only if $a$ and
$b$ belong to the same equivalence class of the transitive closure of $\sim$. Using the
corresponding decomposition of $S$, it is easy
to see that there is some permutation matrix $\Pi$ such that $\Pi O\Pi^\dagger$ is
block-diagonal, with blocks $O_1,\dots, O_m$, and we have 
\[
\Pi O^k\Pi^\dagger = (\Pi O\Pi^\dagger)^k=\mathrm{Diag}(O^k_1,\dots, O^k_m).
\]
Moreover,  for each $i=1,\ldots,m$, the block  $O_i^k$ has all entries positive.
As each 
$O_i$ is symmetric, the eigenvectors of $O_i$ are the same as the eigenvectors of
$O_i^k$. Consequently, by the Perron-Frobenius theorem, each $O_i$ has, up to a scalar
multiple, a unique positive eigenvector. By Lemma \ref{lemma:valency}, $\Pi\mathbf{v}$ is an
eigenvector of $\Pi O\Pi^\dagger$ with eigenvalue $n=|X|$, hence it decomposes as a direct sum
$\Pi\mathbf{v}=\mathbf{v}_1\oplus \cdots\oplus \mathbf{v}_m$, where each $\mathbf{v}_i$ is
an eigenvector of $O_i$ with eigenvalue $n$. Since each $\mathbf{v}_i$ consists of some
entries of the valency vector $\mathbf{v}$, it is strictly positive and hence  it is
given uniquely, up to a scalar multiple.  

Given any block $O_i$ such that the corresponding equivalence class contains at least one unit color, we can identify the
 part $\mathbf{v}_i$ of the valency vector as the
 unique positive eigenvector of $O_i$ having $1$'s on unit color coordinates. Hence, each
 color in the equivalence class of a unit color has determined valency.

Let now $a$ be any color, and let $P=\{b\in S : \Dnabla_{ab}^a>0\}$. 
Note that for $b\in
P$, $\nabla^b_{a^{-1}a}\nabla^{t(a)}_{a^{-1}a}>0$, so that $b\sim t(a)$ and hence $b$ has
determined valency by the previous paragraph. The proof is completed by the following
computation.
\begin{equation}
        \val{a}=\sum_{b\in S}\nabla^{a}_{a b}=\sum_{b\in P}\nabla^{a}_{a b}=\sum_{b\in P}\sqrt{\frac{\val{a}\val{b}}{\val{a}\val{b}}}\nabla^{a}_{a b}=\sum_{b\in P}\sqrt{\val{b}}\dnabla{a}{b}{a}.
    \end{equation}

\end{proof}

\section{The spectrum of $O_{(X,S)}$}

In the previous section, we have shown that the valency vector is one of the eigenvectors of
$\Dnabla\circ\Dnabla^\dag$. In this section, we will examine the spectrum of this matrix and then apply
the results to prove a generalization of the (normal subgroup case of) Lagrange's theorem for schemes. For some simple
classes of schemes, we will be able to characterize the spectrum.

From now on, for a dagger Frobenius structure associated to a
coherent configuration $(X,S)$ we denote the matrix $\Dnabla\circ\Dnabla^\dagger$ by
$O_{(X,S)}$, or simply by $O$, and its spectrum by $\sigma(O)$.
Note that $O$, being a symmetric matrix, is diagonalizable over $\mathbb R$.

In order to study the spectral data of $O$, in particular the eigenvectors, we also need negative reals. Hence, it is convenient to pass from the category $\MatR$ to a larger category  $\mathbf{FHilb}$. Following the (monoidal, dagger-preserving) functor $L\colon\MatR\hookrightarrow\mathbf{FHilb}$ described in Section~\ref{sec:2.5}, each 
coherent configuration induces also a $\dagger$-Frobenius structure in $\mathbf{FHilb}$. For a moment we will work with a general $\dagger$-Frobenius structure $(V,\mu,\eta)$ in $\mathbf{FHilb}$, where $\mu$ denotes the multiplication and $\eta$ denotes the unit. 
\begin{lemma}\label{lem:specDec}
    Let $(V,\mu,\eta)$ be a $\dagger$-Frobenius structure in $\mathbf{FHilb}$ and $\mathbf{u}$, $\mathbf{v}$ be two eigenvectors of $\mu\circ\mu^\dagger$ with eigenvalues $\lambda_u$ and $\lambda_v$, respectively. If $\lambda_u=\lambda_v$, then $\mu\circ (\mathbf{u}\otimes\mathbf{v})$ is an eigenvector of $\mu\circ\mu^\dagger$ corresponding to $\lambda_u=\lambda_v$ as well. If $\lambda_u\not=\lambda_v$, then $\mu\circ (\mathbf{u}\otimes\mathbf{v})$ is the zero vector.
\end{lemma}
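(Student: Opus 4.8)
The plan is to use the defining Frobenius law for $(V,\mu,\eta)$ together with the $\dagger$-Frobenius condition $\mu^\dagger = \Delta$ (comultiplication), and translate the eigenvector hypotheses into string-diagram manipulations. Write $O = \mu\circ\mu^\dagger$ for the ``double-bubble'' endomorphism of $V$. The first observation I would record is that $O$ is self-adjoint and that, because of the Frobenius law, $O$ can be slid past $\mu$: concretely, $\mu\circ(O\otimes\id_V) = \mu\circ(\id_V\otimes O)$, and moreover $\mu\circ(O\otimes\id_V)$ equals the diagram obtained by feeding the two inputs of $\mu$ into a single $O$-bubble applied after the multiplication — that is, there is an identity of the form $\mu\circ(O\otimes\id_V) = O\circ\mu \circ(\text{something})$. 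The cleanest route is the standard fact that in any Frobenius structure $\mu\circ(\id\otimes\mu^\dagger) = (\mu\otimes\id)\circ(\id\otimes\mu^\dagger) \cdot(\dots)$; applying the Frobenius law twice one gets
\[
O\circ\mu = \mu\circ(O\otimes\id_V) = \mu\circ(\id_V\otimes O).
\]
This is the key structural lemma and I expect it to be the only place where real diagrammatic care is needed.

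Granting that lemma, the rest is short. Let $\mathbf u,\mathbf v$ be eigenvectors of $O$ with eigenvalues $\lambda_u,\lambda_v$. Consider $\mathbf w := \mu\circ(\mathbf u\otimes\mathbf v)\colon I\to V$. Then
\[
O\circ\mathbf w = O\circ\mu\circ(\mathbf u\otimes\mathbf v) = \mu\circ(O\otimes\id_V)\circ(\mathbf u\otimes\mathbf v) = \mu\circ((O\circ\mathbf u)\otimes\mathbf v) = \lambda_u\cdot\mu\circ(\mathbf u\otimes\mathbf v) = \lambda_u\,\mathbf w,
\]
using the structural lemma in the second step. Symmetrically, sliding $O$ into the right leg instead gives $O\circ\mathbf w = \lambda_v\,\mathbf w$. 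Hence $\lambda_u\,\mathbf w = \lambda_v\,\mathbf w$, so $(\lambda_u-\lambda_v)\mathbf w = 0$. If $\lambda_u = \lambda_v =: \lambda$, the first computation already shows $\mathbf w$ is an eigenvector of $O$ for $\lambda$ (or is zero, which is the degenerate case where we simply regard it as lying in the eigenspace). If $\lambda_u\neq\lambda_v$, then $\lambda_u - \lambda_v$ is an invertible scalar in $\mathbf{FHilb}$, so $\mathbf w = 0$.

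The main obstacle, as noted, is establishing the identity $O\circ\mu = \mu\circ(O\otimes\id_V)$ from the Frobenius and $\dagger$-Frobenius axioms; everything afterward is a two-line diagram chase. One should be slightly careful about the statement in the edge case $\lambda_u = \lambda_v$ and $\mathbf w = 0$: the zero vector is not literally an ``eigenvector,'' so in the write-up I would phrase the conclusion as ``$\mu\circ(\mathbf u\otimes\mathbf v)$ lies in the $\lambda_u$-eigenspace of $O$,'' which is what is actually used later. I would also remark that the same argument works verbatim in $\MatR$ before applying $L$, since it only uses the monoidal and $\dagger$-structure, but stating it in $\mathbf{FHilb}$ is convenient because there we may freely divide by the nonzero scalar $\lambda_u-\lambda_v$.
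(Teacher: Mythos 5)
Your proposal is correct and follows essentially the same route as the paper: your structural identity $O\circ\mu=\mu\circ(O\otimes\id_V)=\mu\circ(\id_V\otimes O)$ is exactly what the paper's string-diagram chain establishes (Frobenius identity, then associativity), after which both arguments conclude identically by applying the eigenvector property of $\mathbf u$ (resp.\ $\mathbf v$) to get $\lambda_u\,\mathbf w=O\circ\mathbf w=\lambda_v\,\mathbf w$. Your remark about the degenerate case $\mathbf w=0$ when $\lambda_u=\lambda_v$ is a fair point of phrasing that the paper also implicitly elides.
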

\begin{proof}
The statement is a consequence of the following equalities: 
\begin{equation}\label{eq:multiplicationPreservesEigenvectors}
    \lambda_u \cdot \mu\circ (\mathbf{u}\otimes\mathbf{v})=\mu\circ\mu^\dag\circ\mu\circ(\mathbf{u}\otimes\mathbf{v})=\lambda_v \cdot \mu\circ (\mathbf{u}\otimes\mathbf{v}).
\end{equation}
We will prove the first equality in~\eqref{eq:multiplicationPreservesEigenvectors} using string diagrams as follows:
\begin{center}
\vcbox{\includegraphics{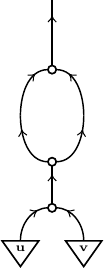}}
\vcbox{$=$}
\vcbox{\includegraphics{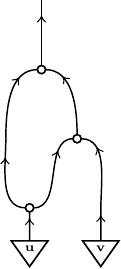}}
\vcbox{$=$}
\vcbox{\includegraphics{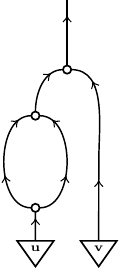}}
\vcbox{$=$}
\vcbox{\includegraphics{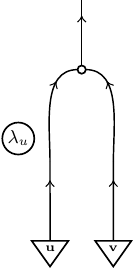}}
\end{center}
The first step follows by the Frobenius identity, the second one applies
associativity, and the last follows from the assumption that $\mathbf{u}$ is an
eigenvector of $\mu\circ \mu^\dagger$.
\end{proof}

\begin{remark} Note that in the above string diagram equalities we did not use the fact
that $\mathbf{v}$ is an eigenvector, the same computation goes trough for any vector in $V$.
Using similarly the other Frobenius identity, we obtain that for any eigenvector
$\mathbf{u}$
with eigenvalue $\lambda_u$, both $\mu\circ(\mathbf{x}\otimes \mathbf{u})$ and
$\mu\circ(\mathbf{u}\otimes \mathbf{x})$ are eigenvectors with the same eigenvalue, for
any vector $\mathbf{x}$. It
follows that the eigenspaces of $\mu\circ\mu^\dagger$ are two-sided ideals in the algebra
given  by the monoid structure $(V,\mu,e)$.

\end{remark}

In $\mathbf{FHilb}$, each dagger Frobenius structure is a direct sum of special $\dagger$-Frobenius structures. 

\begin{proposition}\label{prop:decomposition}
    Let $\mathcal{F}=(V,\mu,\eta)$ be a $\dagger$-Frobenius structure in $\mathbf{FHilb}$ and 
    \begin{equation}\label{eq:eigenSpaceDec}
        V=\bigoplus_{\lambda\in\sigma(\mu\circ\mu^\dagger)} V_\lambda
    \end{equation} 
    be a spectral decomposition of $V$ with respect to $\mu\circ\mu^\dagger$. Then
    $\mathcal{F}$ admits a direct sum decomposition
\begin{equation}
    \mathcal{F}=\bigoplus_{\lambda\in\sigma(\mu\circ\mu^\dagger)} \mathcal{F}_\lambda.
\end{equation}
Here $\mathcal{F}_\lambda=(V_\lambda,\mu_\lambda,\eta_\lambda)$ is a $\dagger$-Frobenius structure on $V_\lambda$ satisfying $\mu_\lambda\circ\mu^\dagger_\lambda=\lambda\cdot\mathrm{id}$.
\end{proposition}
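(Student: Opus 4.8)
The plan is to decompose the Frobenius structure along the eigenspace decomposition \eqref{eq:eigenSpaceDec} and verify that each summand inherits the structure maps. The essential input is the remark preceding the proposition: for an eigenvector $\mathbf u \in V_\lambda$ and \emph{any} vector $\mathbf x$, both $\mu\circ(\mathbf x\otimes\mathbf u)$ and $\mu\circ(\mathbf u\otimes\mathbf x)$ lie in $V_\lambda$; equivalently, each $V_\lambda$ is a two-sided ideal of the algebra $(V,\mu,e)$. Since $\mu\circ\mu^\dagger$ is self-adjoint, the $V_\lambda$ are pairwise orthogonal and $V=\bigoplus_\lambda V_\lambda$ is an orthogonal direct sum; let $P_\lambda\colon V\to V$ be the orthogonal projection onto $V_\lambda$, so $P_\lambda^\dagger=P_\lambda$ and $\sum_\lambda P_\lambda=\id$.

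First I would show each $V_\lambda$ is closed under $\mu$ and contains a piece of the unit. Closure under $\mu$ is the ideal property applied twice (or once, taking both arguments in $V_\lambda$, which is a fortiori in $V_\lambda$). Because $V_\lambda$ is a two-sided ideal and the $V_\lambda$ are complementary, the unit $e\colon I\to V$ decomposes as $e=\sum_\lambda e_\lambda$ with $e_\lambda:=P_\lambda\circ e\colon I\to V_\lambda$; here one uses that the algebra $(V,\mu,e)$ is unital and a direct sum of ideals, so the unit splits into central idempotents, one in each $V_\lambda$. Define $\mu_\lambda:=P_\lambda\circ\mu\circ(P_\lambda\otimes P_\lambda)=\mu\circ(P_\lambda\otimes P_\lambda)$ (the two agree since $V_\lambda$ is an ideal) and $\eta_\lambda:=e_\lambda$. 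Associativity and unitality of $(V_\lambda,\mu_\lambda,\eta_\lambda)$ follow by restricting the corresponding identities for $(V,\mu,e)$ and using that $P_\lambda$ is idempotent and commutes with the ideal structure. Taking daggers, $\mu_\lambda^\dagger=(P_\lambda\otimes P_\lambda)\circ\mu^\dagger\circ P_\lambda$, and one checks $(V_\lambda,\mu_\lambda^\dagger,\eta_\lambda^\dagger)$ is a comonoid on $V_\lambda$ and that the Frobenius law for $\mu_\lambda,\mu_\lambda^\dagger$ is the restriction of the Frobenius law for $\mu,\mu^\dagger$ conjugated by the $P_\lambda$'s. Hence $\mathcal F_\lambda=(V_\lambda,\mu_\lambda,\mu_\lambda^\dagger,\eta_\lambda,\eta_\lambda^\dagger)$ is a dagger Frobenius structure and $\mathcal F=\bigoplus_\lambda\mathcal F_\lambda$ as dagger Frobenius structures (the multiplications, units and their daggers all add up componentwise).

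It remains to verify $\mu_\lambda\circ\mu_\lambda^\dagger=\lambda\cdot\id_{V_\lambda}$. Compute $\mu_\lambda\circ\mu_\lambda^\dagger=\mu\circ(P_\lambda\otimes P_\lambda)\circ(P_\lambda\otimes P_\lambda)\circ\mu^\dagger\circ P_\lambda=\mu\circ(P_\lambda\otimes P_\lambda)\circ\mu^\dagger\circ P_\lambda$. Since $V_\lambda$ is a two-sided ideal, $\mu^\dagger$ maps $V_\lambda$ into $V_\lambda\otimes V_\lambda$ (the comultiplication of a Frobenius structure restricted to an ideal lands in the tensor square of that ideal — this is again the remark, read via the Frobenius law), so $(P_\lambda\otimes P_\lambda)\circ\mu^\dagger\circ P_\lambda=\mu^\dagger\circ P_\lambda$, giving $\mu_\lambda\circ\mu_\lambda^\dagger=\mu\circ\mu^\dagger\circ P_\lambda$. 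On $V_\lambda$ this is multiplication by $\lambda$ by the definition of eigenspace, i.e. $\mu_\lambda\circ\mu_\lambda^\dagger=\lambda\cdot P_\lambda$, which as an endomorphism of $V_\lambda$ is $\lambda\cdot\id_{V_\lambda}$.

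The main obstacle I expect is the bookkeeping that the unit genuinely splits as $e=\sum_\lambda e_\lambda$ with each $e_\lambda$ a unit for $\mu_\lambda$: one must see that $1\in V$ (the image of $e$) lies in no single $V_\lambda$ in general but its $V_\lambda$-component is the identity of the ideal $V_\lambda$. This is standard ring theory for a finite direct sum of two-sided ideals, but phrasing it diagrammatically (or at least carefully with the projections $P_\lambda$) is where the only real care is needed; once that is in place, the comonoid laws and the Frobenius law pass to the summands by pre- and post-composing the ambient identities with the $P_\lambda$'s and absorbing them using idempotency and the ideal property.
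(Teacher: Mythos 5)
Your proof is correct and follows essentially the same route as the paper: both decompose $\mu$, $\mu^\dagger$ and $\eta$ along the eigenspace decomposition using the fact (Lemma~\ref{lem:specDec} and the subsequent remark) that the $V_\lambda$ are two-sided ideals, and then obtain the (co)monoid and Frobenius laws by restriction. Your extra bookkeeping with the projections $P_\lambda$ and the explicit check that $\mu_\lambda\circ\mu_\lambda^\dagger=\lambda\cdot\id_{V_\lambda}$ only spells out what the paper leaves implicit.
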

\begin{proof}
    From Lemma~\ref{lem:specDec} we obtain a decomposition $\mu=\bigoplus_{\lambda\in\sigma}\mu_\lambda$, where $\sigma=\sigma(\mu\circ\mu^\dagger)$. Applying dagger, we yield an analogous direct decomposition of the comultiplication $\mu^\dagger$. Similarly, consider decomposition $\eta=\bigoplus_{\lambda\in\sigma}\eta_\lambda$ with respect to~\eqref{eq:eigenSpaceDec}. It is easy to deduce from Lemma~\ref{lem:specDec} that for each $\lambda\in\sigma$, the summand $\eta_\lambda$ is a unit for $\mu_\lambda$.  Finally, since $\mu_\lambda$ and $\mu^\dagger_\lambda$ are essentially (co)restrictions of $\mu$ and $\mu^\dagger$, we get that associativity, coassociativity, and the Frobenius identity for $\mathcal{F}_\lambda$ follow from the corresponding rules for $\mathcal{F}$.
\end{proof}
\begin{proposition}
Let $(S,\Dnabla,e)$ be the $\dagger$-Frobenius structure associated to a  coherent
configuration on $n$ vertices. Then the spectrum satisfies $\sigma(\Dnabla\circ \Dnabla^\dagger)\subseteq (0,n]$.
\end{proposition}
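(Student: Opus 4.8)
The plan is to exploit that $O=\Dnabla\circ\Dnabla^\dagger$ is, for purely formal reasons, a self-adjoint, positive semidefinite, entrywise-nonnegative matrix: being of the form $AA^\dagger$ it has real nonnegative spectrum, so $\sigma(O)\subseteq[0,\infty)$ comes for free (after applying the functor $L$ of \Cref{sec:2.5}, so that real linear algebra is available). It then remains only to rule out $0$ as an eigenvalue and to bound every eigenvalue above by $n$. The single external input used for the upper bound is \Cref{lemma:valency}: the valency vector $\mathbf v$, which is \emph{strictly} positive, lies in the eigenspace of $O$ for the eigenvalue $n$.

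To see $0\notin\sigma(O)$, I would use the unit law $\Dnabla\circ(e\otimes\id_S)=\lambda_S$ established in \Cref{lem:daggerMonoid}: this exhibits $\Dnabla$ as a split epimorphism with section $(e\otimes\id_S)\circ\lambda_S^{-1}$. Hence $L(\Dnabla)$ is a surjective linear map, so its adjoint $L(\Dnabla^\dagger)=L(\Dnabla)^\dagger$ is injective, and since $\ker(AA^\dagger)=\ker A^\dagger$ for every linear map $A$, we get $\ker O=\ker L(\Dnabla)^\dagger=0$. Thus $O$ is invertible and $0\notin\sigma(O)$. (A second route: in the decomposition of \Cref{prop:decomposition} a summand $\mathcal F_0$ would satisfy $\mu_0\circ\mu_0^\dagger=0$, forcing $\mu_0=0$, which is impossible for the multiplication of a monoid unless $V_0=0$.)

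For the upper bound I would run the elementary Perron--Frobenius comparison by hand, since it works uniformly whether or not $O$ is irreducible. Given $\mu\in\sigma(O)$ with eigenvector $\mathbf x\neq 0$, write $|\mathbf x|$ for the vector of absolute values of its entries. Nonnegativity of the entries of $O$ gives $|\mu|\,|\mathbf x|=|O\mathbf x|\le O|\mathbf x|$ entrywise; pairing with $\mathbf v$ and using $O=O^\dagger$ together with $O\mathbf v=n\mathbf v$ yields
\[
|\mu|\,\langle |\mathbf x|,\mathbf v\rangle\;\le\;\langle O|\mathbf x|,\mathbf v\rangle\;=\;\langle |\mathbf x|,O\mathbf v\rangle\;=\;n\,\langle |\mathbf x|,\mathbf v\rangle .
\]
Since $\mathbf x\neq 0$ and $\mathbf v$ is strictly positive, $\langle |\mathbf x|,\mathbf v\rangle>0$, and dividing gives $|\mu|\le n$. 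Combining with the previous paragraph, every $\mu\in\sigma(O)$ satisfies $0<\mu=|\mu|\le n$, i.e.\ $\sigma(O)\subseteq(0,n]$.

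I do not anticipate a genuine obstacle. The only subtlety is that \Cref{thm:PF} as quoted effectively presupposes irreducibility, whereas $O$ need not be irreducible; the modulus-vector estimate above sidesteps this uniformly, in the same spirit as the block-decomposition device already used in the proof of \Cref{thm:determineStructureConstants}. The other point to state explicitly is that the spectrum is taken after passing through $L$, so that all eigenvalues are real and real eigenvectors exist.
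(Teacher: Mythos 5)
Your proof is correct, but it follows a genuinely different route from the paper's on both halves. For excluding $0$, the paper invokes \Cref{prop:decomposition}: a hypothetical eigenvalue-$0$ summand would be a dagger Frobenius structure $\mathcal F_0$ with $\mu_0\circ\mu_0^\dagger=0$, hence $\mu_0=0$, which is impossible on a nonzero space; you instead observe that the unit law from \Cref{lem:daggerMonoid} makes $\Dnabla$ a split epimorphism, so $L(\Dnabla)^\dagger$ is injective and $\ker O=\ker L(\Dnabla)^\dagger=0$ — a purely linear-algebraic argument that does not need the decomposition machinery (you note the paper's route only as an aside). For the upper bound, the paper points back to the proof of \Cref{thm:determineStructureConstants}, i.e.\ the permutation to block-diagonal form plus the quoted Perron--Frobenius statement, to conclude that $n$ is the maximal eigenvalue; you instead run the standard modulus-vector comparison $|\mu|\,\langle|\mathbf x|,\mathbf v\rangle\le\langle|\mathbf x|,O\mathbf v\rangle=n\langle|\mathbf x|,\mathbf v\rangle$ against the strictly positive eigenvector $\mathbf v$ from \Cref{lemma:valency}, which handles reducible $O$ uniformly and avoids relying on \Cref{thm:PF} (whose statement, as quoted, really needs irreducibility or strict positivity). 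What each approach buys: yours is shorter, elementary, and self-contained apart from \Cref{lemma:valency}; the paper's reuses the block decomposition and the Frobenius-theoretic decomposition \Cref{prop:decomposition}, which are developed anyway for the subsequent spectral analysis, and its zero-exclusion argument has the conceptual payoff of exhibiting the eigenspaces as Frobenius summands. Both proofs ultimately rest on the same key input, namely that the valency vector is a strictly positive eigenvector with eigenvalue $n$.
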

\begin{proof}
   Clearly, $O=\Dnabla\circ \Dnabla^\dagger$ is a positive semi-definite matrix. By the proof of Theorem~\ref{thm:determineStructureConstants}, we deduce that $O$ has $n$ as its maximal eigenvalue. It remains to prove $0$ is not an eigenvalue. 

Write $(V,\mu,\eta)$ for $(L(S),L(\Dnabla),L(e))$, so that we obtain a $\dagger$-Frobenius
structure in $\FHilb$. Let us assume $V_0$ is an eigenspace corresponding to an eigenvalue $\lambda=0$. 
   According to Proposition~\ref{prop:decomposition}, we have a $\dagger$-Frobenius structure $\mathcal{F}_0=(V_0,\mu_0,\eta_0)$ 
   in $\FHilb$, such that $\mu_0\circ\mu^\dagger_0$ is the zero operator. Therefore,
   $\mu_0$ is a zero operator as well, which forces the Frobenius algebra $\mathcal{F}_0$
   to be  trivial, with $V_0=\{0\}$, but then $V_0$ is not an eigenspace.
\end{proof}
Recall the self-adjoint idempotent ${\phi}^\ddagger\circ{\pmb \phi}$ from
Lemma~\ref{lem:projections}. In $\mathbf{FHilb}$, it corresponds to an orthogonal
projection to $\mathrm{Im}(L({\pmb \phi}^\dagger))$. It turns out that this projection
commutes with the orthogonal projections to eigenspaces established in
Proposition~\ref{prop:decomposition}.
\begin{proposition}
    Let $(\phi_0,\phi_1)\colon(X,S)\rightarrow(X',S')$ be a strongly admissible morphism
    of coherent configurations and let  $\phi^\ddagger\circ{\pmb \phi}\colon S\rightarrow
    S$ be the idempotent from Proposition~\ref{lem:projections}. Let $V=L(S)$ and let
    $\pi:=L(\phi^\ddagger\circ {\pmb \phi}):V\to V$. Let  $V=\oplus_{\lambda} V_\lambda$ be the decomposition~\eqref{eq:eigenSpaceDec}. Then $\pi$ admits decomposition $\pi=\oplus_\lambda\pi_\lambda$, where $\pi_\lambda$ is an idempotent endomorphism on $V_\lambda$.  
\end{proposition}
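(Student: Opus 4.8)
The plan is to reduce the whole statement to the single fact that $\pi$ commutes with the self-adjoint operator $\mu\circ\mu^\dagger$ on $V$ (where $\mu=L(\Dnabla)$, and, since $L$ is dagger-preserving, $\mu^\dagger=L(\Dnabla^\dagger)$). Once this commutation is in hand, $\pi$ automatically respects the orthogonal spectral decomposition $V=\bigoplus_\lambda V_\lambda$ of $\mu\circ\mu^\dagger$, and the maps $\pi_\lambda:=\pi|_{V_\lambda}$ are the desired idempotents.

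The first step is to transport \Cref{thm:projOnCoimage} into $\FHilb$. By that theorem, $\phi^\ddagger\circ\pmb\phi$ is simultaneously a semigroup endomorphism of $(S,\Dnabla)$ and a cosemigroup endomorphism of $(S,\Dnabla^\dagger)$; applying the strictly monoidal, dagger-preserving functor $L\colon\MatR\to\FHilb$ to these identities, the idempotent $\pi=L(\phi^\ddagger\circ\pmb\phi)$ satisfies
\begin{equation*}
\mu\circ(\pi\otimes\pi)=\pi\circ\mu
\qquad\text{and}\qquad
(\pi\otimes\pi)\circ\mu^\dagger=\mu^\dagger\circ\pi,
\end{equation*}
where the second identity is merely the dagger of the first, using $\pi^\dagger=\pi$ (which holds by \eqref{eq:diagonalK}). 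The second step is then the two-line computation
\begin{equation*}
(\mu\circ\mu^\dagger)\circ\pi=\mu\circ(\pi\otimes\pi)\circ\mu^\dagger=\pi\circ(\mu\circ\mu^\dagger),
\end{equation*}
which is exactly the commutation we wanted.

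The final step is elementary linear algebra: since $\mu\circ\mu^\dagger$ is self-adjoint, $V$ is the orthogonal direct sum of its eigenspaces $V_\lambda$, $\lambda\in\sigma(\mu\circ\mu^\dagger)$, and if $v\in V_\lambda$ then $(\mu\circ\mu^\dagger)(\pi v)=\pi\bigl((\mu\circ\mu^\dagger)v\bigr)=\lambda\,\pi v$, so $\pi(V_\lambda)\subseteq V_\lambda$. Setting $\pi_\lambda:=\pi|_{V_\lambda}$ gives $\pi=\bigoplus_\lambda\pi_\lambda$, and each $\pi_\lambda$ is idempotent because $\pi^2=\pi$ by \Cref{lem:projections} (in fact, being a restriction of the self-adjoint idempotent $\pi$, each $\pi_\lambda$ is an orthogonal projection on $V_\lambda$). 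I do not expect any genuine obstacle; the only point deserving a little care is the bookkeeping in the first step — checking that the monoidal, dagger-preserving functor $L$ faithfully carries the (co)semigroup-endomorphism identities of \Cref{thm:projOnCoimage} to the two displayed identities in $\FHilb$ — and this is routine.
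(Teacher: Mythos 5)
Your proof is correct and follows essentially the same route as the paper: both rest on \Cref{thm:projOnCoimage} transported through $L$ to show $\pi$ is a semigroup and cosemigroup endomorphism, and then deduce that $\pi$ preserves each eigenspace $V_\lambda$ of $\mu\circ\mu^\dagger$ (the paper checks this on an eigenvector via string diagrams, you phrase it as the commutation $\pi\circ(\mu\circ\mu^\dagger)=(\mu\circ\mu^\dagger)\circ\pi$, which is the same computation). The restriction argument and the idempotence of each $\pi_\lambda$ match the paper's conclusion.
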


\begin{proof}
    It is enough to prove that $\pi$ preserves each eigenspace $V_\lambda$. It easily
    follows from Theorem~\ref{thm:projOnCoimage} and properties of the functor $L$ that
    $\pi$ is a semigroup and cosemigroup endomorphism for the corresponding structures on
    $V$. Consequently, for each eigenvector $\mathbf{v}\in V_\lambda$ we have:
\begin{equation}
\vcbox{\includegraphics{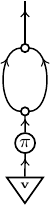}}
\hspace{.8em}\vcbox{$=$}\hspace{.8em}
\vcbox{\includegraphics{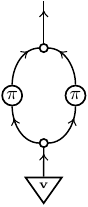}}
\hspace{.8em}\vcbox{$=$}\hspace{.8em}
\vcbox{\includegraphics{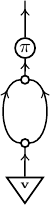}}
\hspace{.8em}\vcbox{$=$}\hspace{.8em}
\vcbox{\includegraphics{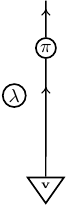}}
\end{equation}
\end{proof}
Revealing the formula of a trace of $O$, we get a sum over three variables. The following lemma gives us a formula with only two variables. The formula has a nice form under an assumption of $\frac{1}{2}$-homogeneity. However, note that in the proof we use this assumption only at the very last step.
\begin{lemma}
If $(X,S)$ is $\frac{1}{2}$-homogeneous coherent configuration, then
    \begin{equation}\label{eq:trace}
    \mathrm{Tr}(\Dnabla\circ\Dnabla^\dagger)=\sum_{d\in S}\left(\sum_{a\in S}\Dnabla^d_{a a^{-1}}\right)^2.
\end{equation}
\end{lemma}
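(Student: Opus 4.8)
The plan is to unfold $\mathrm{Tr}(\Dnabla\circ\Dnabla^\dagger)$ into matrix entries and then reorganize the resulting triple sum by repeated use of the rotation identities of \Cref{lem:rotations} together with the associativity of $\Dnabla$ from \Cref{lem:daggerMonoid}. From the definitions of composition, dagger and trace in $\MatR$ one has $\mathrm{Tr}(\Dnabla\circ\Dnabla^\dagger)=\sum_{c\in S}(\Dnabla\circ\Dnabla^\dagger)_c^c=\sum_{a,b,c\in S}(\Dnabla_{ab}^c)^2$, so the claim amounts to the identity $\sum_{a,b,c}(\Dnabla_{ab}^c)^2=\sum_{d\in S}\big(\sum_a\Dnabla_{aa^{-1}}^d\big)^2$.

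To prove this I would first decouple the triple sum. In one of the two copies of $\Dnabla_{ab}^c$ I replace it by its rotated form $\Dnabla_{bc^{-1}}^{a^{-1}}$ — this is the $\frac12$-homogeneous case of~\eqref{eq:dagRotR}, in which the factor $\sqrt{\val{b^{-1}}/\val b}$ equals $1$. After the substitution $c\mapsto c^{-1}$ in the outer sum (a bijection of $S$ by (CC4)), one application of~\eqref{eq:dagOp} to the first factor, and one more use of~\eqref{eq:dagRotR} on the second, the sum should take the form $\sum_{a,b,c}\Dnabla_{b^{-1}a^{-1}}^{c}\,\Dnabla_{ca}^{b^{-1}}$. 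Its inner sum over $c$ is exactly the left-hand side of the associative law $\sum_c\Dnabla_{xy}^c\Dnabla_{cz}^w=\sum_c\Dnabla_{xc}^w\Dnabla_{yz}^c$, hence equals $\sum_f\Dnabla_{b^{-1}f}^{b^{-1}}\Dnabla_{a^{-1}a}^{f}$; since the two factors now involve disjoint index pairs, the whole expression becomes $\sum_{f}\big(\sum_b\Dnabla_{b^{-1}f}^{b^{-1}}\big)\big(\sum_a\Dnabla_{a^{-1}a}^{f}\big)$. Re-indexing $b\mapsto b^{-1}$ and $a\mapsto a^{-1}$, the second factor is $z_f:=\sum_a\Dnabla_{aa^{-1}}^f$, exactly the quantity appearing in the statement, while the first is $\tau(f):=\sum_b\Dnabla_{bf}^b$ (the trace of right multiplication by the basis vector $f$).

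It then remains to prove that $\tau(f)=z_f$ for every $f$, for then $\mathrm{Tr}(\Dnabla\circ\Dnabla^\dagger)=\sum_f\tau(f)z_f=\sum_f z_f^2$, which is the right-hand side of~\eqref{eq:trace}. Two more rotations do this: the $\frac12$-homogeneous form of~\eqref{eq:dagRotR} together with a re-indexing gives $z_f=\sum_b\Dnabla_{bf^{-1}}^b=\tau(f^{-1})$, and combining~\eqref{eq:dagRotR} with~\eqref{eq:dagOp} gives $\tau(f)=\tau(f^{-1})$ (equivalently, the trace of left multiplication equals that of right multiplication, consistent with the symmetry of the associated $H^*$-algebra from \Cref{coro:hstar}); hence $z_f=\tau(f^{-1})=\tau(f)$.

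The step I expect to require the most care is the decoupling above: orchestrating the rotations and re-indexings in precisely the right order so that the middle sum becomes a genuine instance of associativity and the triple sum splits as a product of two single sums. Once it does, the remaining identification $\tau=z$ of the two resulting vectors — the trace functional on one side and, on the other, the vector $\Dnabla\circ\Dnabla^\dagger\circ e$, i.e.\ the handle element of the Frobenius algebra, whose entries simplify to $\sum_a\Dnabla_{aa^{-1}}^d$ precisely because of $\frac12$-homogeneity — is a short computation. The same argument can be read off the string-diagram picture: $\mathrm{Tr}(\Dnabla\circ\Dnabla^\dagger)$ is the ``theta graph'', the symmetry of the structure lets one rotate its trivalent vertices, and the Frobenius law reconnects it into the ``dumbbell'' $\langle z,z\rangle$; but the index bookkeeping above is self-contained and avoids drawing the intermediate diagrams.
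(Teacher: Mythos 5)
Your proposal is correct and follows essentially the same route as the paper: unfold $\mathrm{Tr}(\Dnabla\circ\Dnabla^\dagger)$ into $\sum_{a,b,c}(\Dnabla_{ab}^c)^2$, use the rotation identities of \Cref{lem:rotations} to bring the triple sum into a shape where associativity of $\Dnabla$ decouples it into a product of two single sums, and identify both factors with $\sum_a\Dnabla_{aa^{-1}}^d$. The only cosmetic differences are that you invoke $\frac12$-homogeneity from the start (so all rotation scalars are $1$, at the price of the extra identification $\tau(f)=z_f$ and one more rotation than your literal recipe lists), whereas the paper carries the factors $\sqrt{\val{b^{-1}}/\val{b}}$, $\sqrt{\val{c}/\val{c^{-1}}}$ along and uses $\frac12$-homogeneity only in the final step.
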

\begin{proof}
We compute
\begin{align*}
    \sum_{ab,c}\dnabla{a}{b}{c}\dnabla{a}{b}{c}&\overset{\eqref{eq:dagRotR}}{=} \sum_{ab,c}\sqrt{\frac{\val{b^{-1}}}{\val{b}}}\dnabla{b}{c^{-1}}{a^{-1}}\dnabla{a}{b}{c}\overset{\eqref{eq:dagOp}}{=}\sum_{ab,c}\sqrt{\frac{\val{b^{-1}}}{\val{b}}}\dnabla{c}{b^{-1}}{a}\dnabla{a}{b}{c}\\
    &\overset{(assoc.)}{=}\sum_{b,c,d}\sqrt{\frac{\val{b^{-1}}}{\val{b}}}\dnabla{c}{d}{c}\dnabla{b^{-1}}{b}{d}\\
    &\overset{\eqref{eq:dagRotL}}{=}
    \sum_{b,c,d}\sqrt{\frac{\val{b^{-1}}}{\val{b}}}\sqrt{\frac{\val{c}}{\val{c^{-1}}}}\dnabla{c^{-1}}{c}{d^{-1}}\dnabla{b^{-1}}{b}{d}\\
    &\overset{\eqref{eq:dagOp}}{=}
    \sum_{b,c,d}\sqrt{\frac{\val{b^{-1}}}{\val{b}}}\sqrt{\frac{\val{c}}{\val{c^{-1}}}}\dnabla{c^{-1}}{c}{d}\dnabla{b^{-1}}{b}{d}\\
    &=\sum_{d}\bigg(\sum_a \sqrt{\frac{\val{a^{-1}}}{\val{a}}} \dnabla{a^{-1}}{a}{d}\bigg)\bigg(\sum_a \sqrt{\frac{\val{a}}{\val{a^{-1}}}} \dnabla{a^{-1}}{a}{d}\bigg).
\end{align*}
    Since $(X,S)$ is $\frac{1}{2}$-homogeneous, then $\val{a}=\val{a^{-1}}$ and the last expression equals~\eqref{eq:trace}. 
\end{proof}
\begin{example}
Let $G$ be a finite group with $n$ elements and consider the corresponding thin
scheme $S(G)=(X,S)$ as in \Cref{ex:cayleyScheme}. In
$S(G)$, both vertices and colors can be addressed with the elements of $G$.
Then $\Dnabla\circ\Dnabla^\dagger$ is a diagonal $n\times n$ matrix in which
all diagonal elements are equal to $n$. One can deduce that by the following
computation
    \begin{equation}
        (\Dnabla\circ\Dnabla^\dagger)_a^b=(\nabla\circ\nabla^\dagger)_a^b=\sum_{c,d\in S, a=c\cdot d=b}\nabla^a_{cd}\nabla^b_{cd}=n\cdot\delta_a^b.
    \end{equation}
\end{example}
Next, consider a surjective morphism of finite groups $f\colon G\rightarrow H$.
According to~\cite{french2013functors}, $f$ induces an admissible morphism
of schemes $S(f)\colon S(G)\rightarrow S(H)$. In this case, it is well known that $|H|$ is
a divisor of $|G|$; equivalently, the largest eigenvalue of $O_{S(H)}$ divides the
largest eigenvalue of $O_{S(G)}$. The following theorem refines this observation
for association schemes  and can be viewed as a generalization of the Lagrange theorem for groups.
\begin{theorem}\label{thm:genLagrange}
Let $(\phi_0,\phi_1)\colon (X,S)\rightarrow (X',S')$ be a surjective
admissible morphism of schemes and \((\lambda_1,\ldots,\lambda_m)\),
$m=|S'|$, be the vector of the eigenvalues of $O_{(X',S')}$ (counting
multiplicities). Then there is a positive integer $k$, such that
$(k\cdot\lambda_1,\ldots,k\cdot\lambda_m)$ is a subvector of the vector of
the eigenvalues of $O_{(X,S)}$.
\end{theorem}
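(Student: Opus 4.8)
The plan is to realize $k\cdot O_{(X',S')}$, up to unitary equivalence, as the restriction of $O_{(X,S)}$ to a suitable invariant subspace, namely the image of a normalized copy of $\pmb\phi^\dagger$. Throughout I work in $\FHilb$ via the functor $L$ and suppress $L$ from the notation, as elsewhere in the paper.

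First I would assemble the scheme-specific facts. Since $(X,S)$ is a scheme, \Cref{coro:defsCoincide} makes $(\phi_0,\phi_1)$ strongly admissible, so \Cref{lemma:dagSemHom}, \Cref{cor:ddagger} and \Cref{lem:projections} all apply; and since the morphism is surjective, $\phi_1$ — and hence $\phi_0$, which is easy to see for schemes — is onto. In the scheme $S'$ every color has the same target, the unique unit color, so the diagonal matrix $k_\phi$ of \Cref{def:ddager} equals $k\cdot\id_{S'}$ for a single value $k=|\phi_0^{-1}(x')|$, a positive integer (in fact $k=|X|/|X'|$). Hence $\phi^\ddagger=\tfrac1k\pmb\phi^\dagger$ by \eqref{eq:diagonalK}, and since $\phi_1$ is onto, \eqref{eq:im} gives $\pmb\phi\circ\phi^\ddagger=\id_{S'}$, whence $\pmb\phi\circ\pmb\phi^\dagger=k\cdot\id_{S'}$. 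Therefore $U:=\tfrac1{\sqrt k}\pmb\phi^\dagger\colon S'\to S$ is an isometry with $U^\dagger U=\id_{S'}$ and $UU^\dagger=\phi^\ddagger\circ\pmb\phi=:\pi$, the self-adjoint idempotent of \Cref{lem:projections}; thus $\pi$ is the orthogonal projection onto $W:=\mathrm{Im}(U)$, a subspace of dimension $m=|S'|$.

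The central point is the operator identity $\pmb\phi\circ O_{(X,S)}\circ\pmb\phi^\dagger=k^2\cdot O_{(X',S')}$. To prove it I would combine: (i) $\pmb\phi\circ\Dnabla_S=\Dnabla_{S'}\circ(\pmb\phi\otimes\pmb\phi)$, the semigroup-morphism property of \Cref{lemma:dagSemHom}; (ii) the dagger of \Cref{cor:ddagger}, which with $k_\phi=k\cdot\id$ reads $(\pmb\phi\otimes\pmb\phi)\circ\Dnabla_S^\dagger=k\cdot\Dnabla_{S'}^\dagger\circ\pmb\phi$; and (iii) $\pmb\phi\circ\pmb\phi^\dagger=k\cdot\id_{S'}$. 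Then
\[
\pmb\phi\circ\Dnabla_S\circ\Dnabla_S^\dagger\circ\pmb\phi^\dagger
=\Dnabla_{S'}\circ(\pmb\phi\otimes\pmb\phi)\circ\Dnabla_S^\dagger\circ\pmb\phi^\dagger
=k\cdot\Dnabla_{S'}\circ\Dnabla_{S'}^\dagger\circ\pmb\phi\circ\pmb\phi^\dagger
=k^2\cdot\Dnabla_{S'}\circ\Dnabla_{S'}^\dagger,
\]
which is the claim since $O_{(X,S)}=\Dnabla_S\circ\Dnabla_S^\dagger$ and $O_{(X',S')}=\Dnabla_{S'}\circ\Dnabla_{S'}^\dagger$. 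Conjugating by $U$ yields $U^\dagger\,O_{(X,S)}\,U=k\cdot O_{(X',S')}$.

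To conclude, I would invoke the Proposition of this section stating that $\pi=\phi^\ddagger\circ\pmb\phi$ preserves every eigenspace of $O_{(X,S)}$; being self-adjoint, $\pi$ then commutes with $O_{(X,S)}$, so $W=\mathrm{Im}(\pi)$ and $W^\perp$ are both $O_{(X,S)}$-invariant, and hence the multiset of eigenvalues of $O_{(X,S)}|_{W}$ is a sub-multiset of the eigenvalues of $O_{(X,S)}$. On the other hand $U$ is a unitary from $S'$ onto $W$, and, using the identity of the previous paragraph together with $UU^\dagger=\pi$ and the $W$-invariance just noted, $U$ intertwines $k\cdot O_{(X',S')}$ with $O_{(X,S)}|_{W}$; hence the eigenvalues of $O_{(X,S)}|_{W}$, counted with multiplicity, are exactly $k\lambda_1,\dots,k\lambda_m$. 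This establishes the theorem for this $k$. I expect the step requiring the most care to be the operator identity $\pmb\phi\circ O_{(X,S)}\circ\pmb\phi^\dagger=k^2\cdot O_{(X',S')}$ — in particular tracking the single factor $k$ coming from the dagger of \Cref{cor:ddagger} to its correct position — while the indispensable structural input is the eigenspace-invariance of $\pi$: without the commutation of $\pi$ with $O_{(X,S)}$ one would obtain only an interlacing inequality, not an honest subvector.
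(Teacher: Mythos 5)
Your argument is correct, and it runs on the same computational core as the paper's proof --- constancy of $k_\phi$ for schemes, the (co)semigroup identities of \Cref{lemma:dagSemHom} and \Cref{cor:ddagger}, and \Cref{lem:projections} --- but the concluding mechanism is different. The paper combines the daggered form of your step (i) with \Cref{cor:ddagger} as stated to obtain the one-sided intertwining relation $O_{(X,S)}\circ\pmb\phi^\dagger=k\cdot\pmb\phi^\dagger\circ O_{(X',S')}$; since $\tfrac1k\pmb\phi$ is a left inverse of $\pmb\phi^\dagger$, each eigenvector of $O_{(X',S')}$ pushes forward injectively to an eigenvector of $O_{(X,S)}$ with eigenvalue multiplied by $k$, and the subvector statement follows at once, with no invariant-subspace considerations. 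You instead prove the compression identity $\pmb\phi\circ O_{(X,S)}\circ\pmb\phi^\dagger=k^2\,O_{(X',S')}$, which by itself yields only interlacing (as you note), and you repair this by invoking the eigenspace-preservation statement for $\pi=\phi^\ddagger\circ\pmb\phi$ (the unlabelled proposition proved via \Cref{thm:projOnCoimage}), so that $W=\mathrm{Im}(\pmb\phi^\dagger)$ becomes an $O_{(X,S)}$-invariant subspace on which $O_{(X,S)}$ is unitarily equivalent to $k\,O_{(X',S')}$. That costs one extra ingredient but buys a slightly stronger structural conclusion, namely that the eigenvalues $k\lambda_1,\dots,k\lambda_m$ are realized on an explicit invariant subspace carrying a unitary copy of $k\,O_{(X',S')}$. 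Note that your own steps already contain the shorter route: stopping before you apply $\pmb\phi\circ\pmb\phi^\dagger=k\cdot\id_{S'}$, i.e.\ composing the dagger of (i) with \Cref{cor:ddagger}, gives the paper's intertwining relation directly and lets you drop the appeal to eigenspace preservation altogether.
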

\begin{proof}
Since we have schemes, the diagonal matrix $k_\phi$ (see formulas~\eqref{eq:diagonalK}) has all its diagonal entries
equal to some $k\in\mathbb{N}$. We claim that if $\mathbf{v}$ is an eigenvector of $(X',S')$ corresponding to eigenvalue
$\lambda$, then $\pmb\phi^\dagger\circ\mathbf{v}$ is an eigenvector of $(X,S)$, corresponding to eigenvalue
$k\cdot\lambda$.  This is enough, since $\pmb\phi^\dagger$ has a left inverse $\frac{1}{k}\pmb\phi$ by
Lemma~\ref{lem:projections}, hence it has a trivial kernel. Let us prove the claim with the following string diagram
calculation:
\begin{center}
\vcbox{\includegraphics{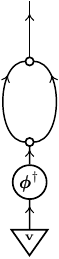}}
\vcbox{~$=$~}
\vcbox{\includegraphics{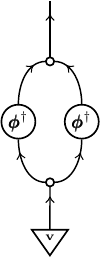}}
\vcbox{~$=$~}
\vcbox{\includegraphics{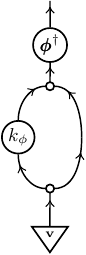}}
\vcbox{~$=$~}
\vcbox{\includegraphics{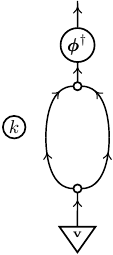}}
\vcbox{~$=$~}
\vcbox{\includegraphics{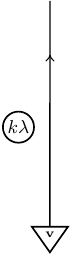}}
\end{center}
The first equality follows by (the dagger variant of)
Corollary~\ref{cor:daggerMonHom}, the second equality follows from
Corollary~\ref{cor:ddagger}, and the third one is due to $k_\phi$ being a constant
diagonal matrix.
\end{proof}
\begin{example}\label{ex:completeGraph}
    Consider a trivial scheme $\mathcal{T}_n$, $n\in\mathbb{N}$, which has one unit color and one non-unit color of valency $n-1$. A straightforward computation leads to
    \begin{equation}
        O= \Dnabla\circ\Dnabla^\dagger =
        \begin{bmatrix}
    2 & \frac{n-2}{\sqrt{n-1}} \\
    \frac{n-2}{\sqrt{n-1}} & \frac{(n-1)^2+1}{n-1}
    \end{bmatrix}.
    \end{equation}
The spectrum of $O$ equals $\sigma(O)=\{n,\frac{n}{n-1}\}$. Eigenvalue $n$
corresponds to a valency eigenvector $\mathbf{v}=(1,\sqrt{n-1})^T$ and
eigenvalue $\frac{n}{n-1}$ corresponds to an eigenvector
$\mathbf{u}=(\sqrt{n-1},-1)^T$. 
The multiplication and comultiplication act on the eigenvectors as follows
    \begin{align*}
        \Dnabla^\dagger\circ\mathbf{v}= \mathbf{v}\otimes\mathbf{v} &\quad\text{and}\quad \Dnabla\circ(\mathbf{v}\otimes\mathbf{v})= n\cdot \mathbf{v},\\
         \Dnabla^\dagger\circ\mathbf{u}= \frac{1}{\sqrt{n-1}}\cdot\mathbf{u}\otimes\mathbf{u} &\quad\text{and}\quad\Dnabla\circ(\mathbf{u}\otimes\mathbf{u})= \frac{n}{\sqrt{n-1}}\cdot \mathbf{v},\\
         \Dnabla \circ(\mathbf{v}\otimes\mathbf{u})&=0=\Dnabla\circ(\mathbf{u}\otimes\mathbf{v}).
    \end{align*}
\end{example}
\begin{example}\label{ex:wiki}
Consider a pair $(\phi_0,\phi_1)\colon (X,S)\rightarrow \mathcal{T}_3$ and $(\psi_0,\psi_1)\colon S\rightarrow \mathcal{T}_2$ of admissible morphisms of association schemes given by Figure~\ref{fig:exWiki}. Let us label the colors of scheme $(X,S)$ by integers $0,1,2,3$. 
\begin{figure}
    \centering
    \includegraphics[width=0.95\linewidth]{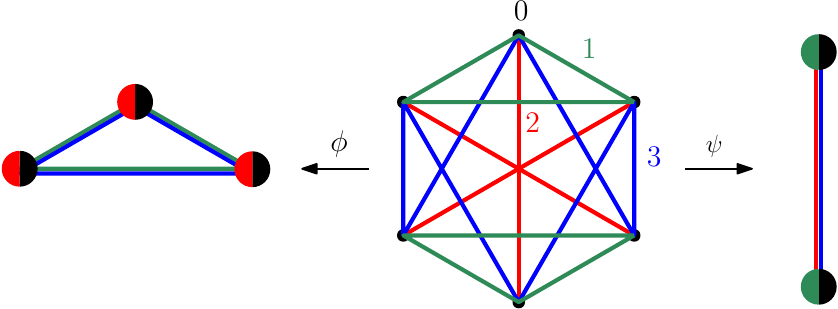}
    \caption{The association scheme $\mathcal T_2\otimes\mathcal T_3$}
    \label{fig:exWiki}
\end{figure}
For this scheme, we have
\begin{equation}
    O = \begin{bmatrix}
    4 & \sqrt{2} & 0 & 0 \\
    \sqrt{2} & 5 & 0 & 0 \\
    0 & 0 & 4 & \sqrt{2} \\
    0 & 0 & \sqrt{2} & 5
    \end{bmatrix}.
\end{equation}
The spectrum has two elements, $\sigma(O)=\{3,6\}$. The eigenvalue $6$ equals the number of vertices, and the corresponding eigenspace is generated by two vectors
$\mathbf{v}_1=(1,\sqrt{2},1,\sqrt{2})^T$ and $\mathbf{v}_2=(1,\sqrt{2},-1,-\sqrt{2})^T$. The eigenspace corresponding to $3$ is generated by $\mathbf{u}_1=(\sqrt{2},-1,\sqrt{2},-1)^T$ and $\mathbf{u}_2=(\sqrt{2},-1,-\sqrt{2},1)^T$.

The matrices $\phi$ and $\psi$ corresponding to the two admissible morphisms in concern are 
\begin{equation}
    \pmb\phi = \begin{bmatrix} 
    1 & 0 & 1 & 0 \\ 
    0 & 1 & 0 & 1
    \end{bmatrix}, \quad
    \pmb\psi = \begin{bmatrix} 
    1 & \sqrt{2} & 0 & 0 \\ 
    0 & 0 & 1 & \sqrt{2} 
    \end{bmatrix}
    \label{eq:matrices_2x4}
\end{equation}
Denote by $\mathbf{v}'=(1,1)^T$, $\mathbf{u}'=(1,-1)^T$ and $\mathbf{v}''=(1,\sqrt{2})^T$, $\mathbf{u}''=(\sqrt{2},-1)^T$ the eigenvectors from Example~\ref{ex:completeGraph}, for $\mathcal{T}_2$ and $\mathcal{T}_3$.
Note that $\mathbf{v}_1=\pmb\phi^\dagger\mathbf{v}''=\pmb\psi^\dagger\mathbf{v}'$ is the valency vector. Moreover, $\mathbf{v}_2=(1,\sqrt{2},-1,-\sqrt{2})^T=\pmb\psi^\dagger\mathbf{u}'$ and $\mathbf{u}_1=(\sqrt{2},-1,\sqrt{2},-1)^T=\pmb\phi^\dagger\mathbf{u}''$.

Moreover, observe that $k_\phi$ is a diagonal matrix having $2$'s on its
diagonal and the spectrum of $\mathcal{T}_2$ has two eigenvalues $3$ and
$\frac{3}{2}$. In accordance with Theorem~\ref{thm:genLagrange}, $2\cdot 3$ and
$2\cdot \frac{3}{2}$ belong to the spectrum of $O_{(X,S)}$. Similarly, $k_\psi$ has
$3$'s on the diagonal and $\mathcal{T}_2$ has in spectrum twice $2$. Indeed,
the value $3\cdot 2$ appears twice in the spectrum of $O_{(X,S)}$. 
\end{example}
\begin{lemma}
Let $(X,S)$ and $(Y,T)$ be coherent configurations with spectra $\sigma_S$
and $\sigma_T$. Then the spectrum of the tensor product configuration $(X,S)\otimes(Y,T)$
is the elementwise product: $\{\lambda.\rho\mid
\lambda\in\sigma_S,\rho\in\sigma_T\}$.
\end{lemma}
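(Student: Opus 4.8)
The plan is to reduce the statement to the elementary fact that the eigenvalues of a Kronecker product $A\otimes B$ are the products $\lambda\rho$ with $\lambda\in\sigma(A)$ and $\rho\in\sigma(B)$. Concretely, I would show that the matrix $O$ attached to the tensor product configuration is \emph{literally} the Kronecker product $O_{(X,S)}\otimes O_{(Y,T)}$, and then read off the spectrum.

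First I would record the two coordinatewise identities for the tensor product coherent configuration $(X,S)\otimes(Y,T)=(X\otimes Y,S\otimes T)$ of \Cref{ex:tensorProduct}. For colors $(a,a'),(b,b'),(c,c')\in S\otimes T$ one has
\[
\val{(a,a')}=\val{a}\cdot\val{a'},\qquad
\nabla_{(a,a')(b,b')}^{(c,c')}=\nabla_{ab}^c\cdot\nabla_{a'b'}^{c'}.
\]
The first is obtained by counting the out-neighbours of a vertex $(x,x')$ along the relation $(a,a')$; the second by counting the $(a,a'),(b,b')$-coloured triangles over a fixed $(c,c')$-edge coordinate by coordinate. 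Substituting these into \eqref{eq:daggerNabla} gives at once the multiplicativity of the dagger structure constants:
\[
\dnabla{(a,a')}{(b,b')}{(c,c')}=\dnabla{a}{b}{c}\cdot\dnabla{a'}{b'}{c'}.
\]
(Equivalently, $\Dnabla_{S\otimes T}$ is the tensor product of $\Dnabla_S$ and $\Dnabla_T$ composed with the evident reshuffling permutation $\id_S\otimes\sigma_{T,S}\otimes\id_T$, which is unitary in $\MatR$.)

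Next I would compute a matrix entry of $O_{(X,S)\otimes(Y,T)}=\Dnabla\circ\Dnabla^\dagger$ directly. Using $(\Dnabla^\dagger)_a^{pq}=\dnabla{p}{q}{a}$ and the factorisation above,
\[
\bigl(O_{(X,S)\otimes(Y,T)}\bigr)_{(a,a')}^{(c,c')}
=\sum_{p,q\in S}\sum_{p',q'\in T}\dnabla{p}{q}{c}\dnabla{p'}{q'}{c'}\dnabla{p}{q}{a}\dnabla{p'}{q'}{a'}
=\bigl(O_{(X,S)}\bigr)_a^c\cdot\bigl(O_{(Y,T)}\bigr)_{a'}^{c'},
\]
so that $O_{(X,S)\otimes(Y,T)}=O_{(X,S)}\otimes O_{(Y,T)}$ as matrices indexed by $S\otimes T$. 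Finally, since $O_{(X,S)}$ and $O_{(Y,T)}$ are symmetric they admit orthonormal eigenbases, and the tensor products of these bases form an orthonormal eigenbasis of the Kronecker product, with $(O_{(X,S)}\otimes O_{(Y,T)})(\mathbf v\otimes\mathbf w)=\lambda\rho\,(\mathbf v\otimes\mathbf w)$ whenever $O_{(X,S)}\mathbf v=\lambda\mathbf v$ and $O_{(Y,T)}\mathbf w=\rho\mathbf w$; hence $\sigma\bigl(O_{(X,S)\otimes(Y,T)}\bigr)=\{\lambda.\rho\mid\lambda\in\sigma_S,\ \rho\in\sigma_T\}$.

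I do not expect any real obstacle: the only step needing care is the coordinatewise multiplicativity of valencies and structure constants (and, if one prefers the diagrammatic route, checking that the reshuffling permutation cancels with its dagger when forming $O$), after which the conclusion is pure Kronecker-product linear algebra.
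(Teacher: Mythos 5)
Your proposal is correct and follows essentially the same route as the paper: the paper's proof is a one-line appeal to $\Dnabla_{(X,S)\otimes(Y,T)}\simeq\Dnabla_{(X,S)}\otimes\Dnabla_{(Y,T)}$, and you simply verify that isomorphism in coordinates (multiplicativity of valencies, structure constants, hence of the dagger structure constants) and then conclude via the standard Kronecker-product spectrum argument, i.e.\ $O_{(X,S)\otimes(Y,T)}=O_{(X,S)}\otimes O_{(Y,T)}$. Your write-up just makes explicit the details the paper leaves as ``easy''.
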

\begin{proof}
This is an easy consequence of the fact that $\Dnabla_{(X,S)\otimes(Y,T)}\simeq\Dnabla_{(X,S)}\otimes\Dnabla_{(Y,T)}$.
\end{proof}
In the Example~\ref{ex:wiki}, the association scheme $(X,S)$ is isomorphic to
$\mathcal{T}_3\otimes \mathcal{T}_2$, hence we get the spectral data for $S$
by multiplying those for $\mathcal{T}_3$ and $\mathcal{T}_2$.

In the following example, there is an obvious symmetry, which reflects the presence of an automorphism switching two colors. 
\begin{example}
Let $S$ be an association scheme defined by Figure~\ref{fig:pentagon}. We have
\begin{equation}
    O:=(\Dnabla\circ\Dnabla^\dagger)_a^b = \begin{bmatrix}
    3 & \frac{1}{\sqrt{2}} & \frac{1}{\sqrt{2}} \\
    \frac{1}{\sqrt{2}} & \frac{7}{2} & 1 \\
    \frac{1}{\sqrt{2}} & 1  & \frac{7}{2}
    \end{bmatrix}.
\end{equation}
    The spectrum equals $\sigma=\{5,\frac{5}{2}\}$. Note that $O$ is invariant under  the symmetry $\Pi:
    \mathbb R^3\to \mathbb R^3$ given as $\Pi(x,y,z)=(x,z,y)$,  which
    implies that the eigenspaces are invariant under $\Pi$. 
The eigenvalue $5$ corresponds to valency vector $\mathbf{v}=(1,\sqrt{2},\sqrt{2})$ which
clearly satisfies $\Pi\mathbf{v}=\mathbf{v}$. The eigenvalue $\frac{5}{2}$ has eigenspace of dimension $2$ generated by 
the vectors $\mathbf{u}_1$ and $\mathbf{u}_2=\Pi\mathbf{u}_1$, where we may choose 
    \begin{equation}
        \mathbf{u}_1=(\frac{2}{5},\frac{-\sqrt{2}-\sqrt{10}}{\sqrt{10}},\frac{\sqrt{2}-\sqrt{10}}{\sqrt{10}}).
    \end{equation}
    Note that then 
    \begin{equation}
        \langle \mathbf{u}_i,\mathbf{u}_j\rangle =\frac{2}{5}\delta_i^j,\quad \mu(\mathbf{u}_i\otimes\mathbf{u}_j)=\delta_i^j \mathbf{u}_i.
    \end{equation}
\end{example}
\begin{example}
    Denote by $S$ the association scheme from Figure~\ref{fig:petersen} induced by the Petersen graph.
    \begin{figure}
        \centering
        \includegraphics[width=0.5\linewidth]{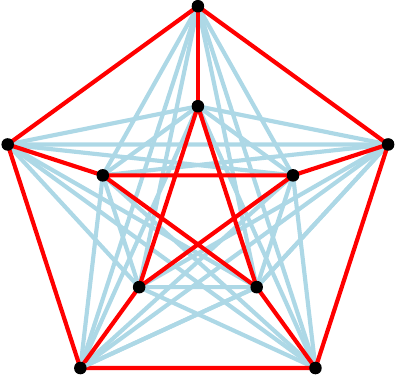}
        \caption{scheme induced by Petersen graph}
        \label{fig:petersen}
    \end{figure}
    Even though the matrix
    \[
            \Dnabla\circ\Dnabla^\dagger =
            \begin{bmatrix}
                3 & \frac{2}{3}\sqrt{3} & \frac{5}{6}\sqrt{6} \\
                \frac{2}{3}\sqrt{3} & \frac{14}{3} & \frac{14}{6}\sqrt{2} \\
                \frac{5}{6}\sqrt{6} & \frac{14}{6}\sqrt{2} & \frac{41}{6} & \\
            \end{bmatrix}
            \]
            is rather complicated, the spectrum consists of only rational numbers $\sigma=\{10,\frac{5}{2},2\}$. 
\end{example}
\begin{example}
The smallest non-Schurian coherent configuration was found in \cite{klin2017non}. It has rank 12 and degree
14. This example is not $\frac{1}{2}$-homogeneous, there are two unit colors with cardinalities
$6$ and $8$.
The spectrum of $\theO$ is
\begin{center}
\begin{tabular}{|r|r|}
\hline
eigenvalue & multiplicity\\
\hline
14& 4\\
 3& 1\\
 $\frac{14}{3}$& 4\\
 4& 2\\
\hline
\end{tabular}
\end{center}
\end{example}

\begin{example}
Let  $(X,S)$ be $\frac12$-homogeneous. As we observed in Corollary
\ref{coro:hstar} and the
paragraph below it, our dagger Frobenius structure is symmetric
and coincides with an $H^*$-algebra structure of the adjacency algebra $\mathcal A$. In
particular, the matrix $O_{(X,S)}$ becomes the matrix of the operator 
\[
    m\cdot(\nabla_{\mathcal A}\circ\nabla_{\mathcal A}^\dag)
\]
in the normalized basis $\{\bar M_a,\ a\in S\}$, where $m$ is the cardinality of
unit colors and $(\mathcal A,\nabla_{\mathcal
A},e_{\mathcal A})$ is the corresponding symmetric dagger Frobenius structure on $\mathcal A$.
We now show how the spectrum of $O_{(X,S)}$ is related to the structure of $\mathcal A$. 

Since $\mathcal A$ is a $*$-subalgebra of $M_n(\mathbb C)$, we have a direct sum
decomposition of the form $\mathbb C^n\simeq \bigoplus_i \mathbb C^{d_i}\otimes \mathbb C^{k_i}$, such that
(up to a unitary conjugation)
\[
\mathcal A= \bigoplus_i M_{d_i}(\mathbb C)\otimes I_{k_i}.
\]
The inner product induced in each of the factors is  
\[(A,B)\mapsto \mathrm{Tr}\,
[(A^*B)\otimes I_{k_i}]=k_i\mathrm{Tr}\,[A^*B],\qquad A,B\in M_{d_i}(\mathbb C).
\] The $H^*$-algebra $\mathcal A$ is thus
isomorphic to $\bigoplus_i B(\mathbb C^{d_i}, k_i)$, in the notation of
\cite[Section 5.4]{heunen2019categories}. By \cite[Prop.~5.33]{heunen2019categories}, the symmetric
dagger Frobenius structure in the factors is given by the scaled pair of pants structure on $(C^{d_i})^*\otimes
C^{d_i}$, with the multiplication and the unit given by
\begin{center}
\vcbox{\includegraphics{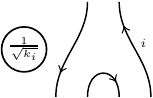}}
\vcbox{\hspace{3em}}
\vcbox{\includegraphics{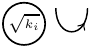}}
\end{center}
and the Frobenius structure in $\mathcal A$ is
then their direct sum. In particular,
\begin{center}
\vcbox{$\nabla_{\mathcal A}\circ\nabla_{\mathcal A}^\dag=\bigoplus_i$}
\vcbox{\includegraphics{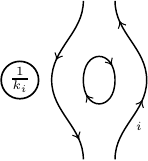}}
\vcbox{$=\bigoplus_i\frac{d_i}{k_i} I_{d_i}$}
\end{center}
This shows that $\sigma(O)=\{ m\frac{d_i}{k_i}\}_i$. In particular, the spectrum contains only rational numbers.
\end{example}

\begin{example}\label{ex:permut} 
Let us describe the spectrum of $O$ for Schurian coherent configurations arising from
an action of $\mathbb Z$, that means, from a single permutation.
Let $X$ be a finite set and let $\Gamma$ be a permutation
on $X$. Let 
$(x,y)\sim (x',y')$ if $x'=\Gamma^k(x)$, $y'=\Gamma^k(y)$ for some $k\in \mathbb Z$ and
let 
$S$ be the set of equivalence classes of $\sim$. Then $(X,S)$ is a coherent configuration on $X$. 
The permutation $\Gamma$ can be decomposed
into cycles $C_1,\dots, C_m$, with lengths $n_l=|C_l|$, $\sum_l n_l=n:=|X|$. Two vertices
in $X$ have the same unit color iff they are in the same cycle, so that $(X,S)$ is not
$\frac12$-homogeneous if $n_l\ne n_{l'}$ for some $l$, $l'$. 

Computing  the form of the matrices $\nabla$, $\Dnabla$, $O$ and the spectrum of $O$ in this case is not difficult, but somewhat lengthy. We will
only report the general description of  $\sigma(O)$.  Let $N:=LCM(n_1,\dots,n_m)$ and let
$\mathcal L$ be the sublattice generated by $n_1,\dots, n_m$ in the divisibility lattice
$\mathcal L_D(N)$. Any join irreducible element in $\mathcal L$ has the form $n_T:=\wedge_{l\in T} n_l$ for
some $T\subseteq \{1,\dots,m\}$, with the lattice operations in $\mathcal L_D(N)$. 
Let 
\[\mathcal T:=\{\emptyset \neq T\subseteq \{1,\dots,m\},\ n_T \text{ is join irreducible in }
\mathcal L \text{ or } T=\{1,\dots,m\}\}.
\]
We then have 
\[
\sigma(O)=\{\lambda_T:=\sum_{l\in T} n_l,\ T\in \mathcal T\}.
\]
Moreover, the multiplicity of each  eigenvalue $\lambda\in \sigma(O)$ is obtained as
\[
\mu(\lambda)=\sum_{T\in \mathcal T, \lambda=\lambda_T} \chi_T|T|^2,\qquad
\chi_T=\begin{dcases} (n_T-n_T\wedge (\vee_{l\not\in T} n_l)) &\text{if } T\neq
[m]\\
n_{[m]} & \text{otherwise}.
\end{dcases}
\]
For example, assume that all $n_l$ are mutually coprime. Then $\mathcal
T$ consists of all singletons and the whole set $\{1,\dots,m\}$. The eigenvalues are 
\[
\lambda=\begin{dcases} n_l,\ l=1,\dots,n & \text{with multiplicity } n_l-1\\
 n=\sum_l n_l & \text{with multiplicity } m^2.
 \end{dcases}
 \]
For the other extreme example $n_1=\dots=n_m=k$, the sublattice $\mathcal L$ consists of a
single point and hence has no join-irreducible elements. In this case, $O$ has a single
eigenvalue $n$, with multiplicity $km^2=|S|$. 
\end{example}

\section{Future research, open problems}

In future papers, we intend to consider a variety of topics and themes; for example, the following.
\begin{itemize}
\item Describe the relationships between various types of morphisms we introduced, from the categorical perspective.
\item A coherent configuration is a module. A module over a monoid in a compact category is the same thing as the
representation of that monoid. There is a theory of representations of Frobenius algebras (see for example
\cite{skowronski2017frobenius}), that could be applicable in the theory of coherent configurations.
\item When we constructed the Frobenius structure, our choice of a non-degenerate form was $e^\dag$. 
However, \Cref{thm:frobIsC4} allows us to use any form that is supported in $E_S$. There is at least
one other non-degenerated form that is worth further examination
\[
c_p^*=
\begin{cases}
\frac{1}{|p|}&p\in E_S\\
0&\text{otherwise}.
\end{cases}
\]
We already know that the Frobenius structure arising from this form is always symmetric.
\end{itemize}

\begin{problem}
Is it true that for every coherent configuration $(X,S)$, the spectrum of $O_{(X,S)}$ contains
only rational numbers?
\end{problem}
\begin{problem}
Is it true that for every coherent configuration $(X,S)$, the spectrum of $O_{(X,S)}$ is
lower bounded by $\frac{|X|}{|X|-1}$?
\end{problem}
\begin{problem}
Are there a pair of coherent configurations 
$(X,S)$, $(Y,T)$ with $|S|=|T|$ such that the valency vectors for them are the same up to a permutation, 
but the spectra of $O_{(X,S)}$ and $O_{(Y,T)}$ are different?
\end{problem}

\end{document}